\theoremstyle{plain}
\newtheorem*{theorem*}{Theorem}
\newtheorem{theorem}{Theorem}[section]
\crefname{theorem}{Theorem}{Theorems}
\Crefname{theorem}{Theorem}{Theorems}
\newtheorem*{lemma*}{Lemma}
\newtheorem{lemma}[theorem]{Lemma}
\crefname{lemma}{Lemma}{Lemmas}
\Crefname{lemma}{Lemma}{Lemmas}
\newtheorem*{claim*}{Claim}
\newtheorem{claim}[theorem]{Claim}
\crefname{claim}{Claim}{Claims}
\Crefname{claim}{Claim}{Claims}
\newtheorem{proposition}[theorem]{Proposition}
\crefname{proposition}{Proposition}{Propositions}
\Crefname{proposition}{Proposition}{Propositions}
\newtheorem{corollary}[theorem]{Corollary}
\crefname{corollary}{Corollary}{Corollaries}
\Crefname{corollary}{Corollary}{Corollaries}
\crefname{conjecture}{Conjecture}{Conjectures}
\Crefname{conjecture}{Conjecture}{Conjectures}
\newtheorem{question}[theorem]{Question}
\crefname{question}{Question}{Questions}
\Crefname{question}{Question}{Questions}
\crefname{observation}{Observation}{Observations}
\Crefname{observation}{Observation}{Observations}
\crefname{example}{Example}{Examples}
\Crefname{example}{Example}{Examples}
\theoremstyle{definition}
\crefname{problem}{Problem}{Problems}
\Crefname{problem}{Problem}{Problems}
\newtheorem{definition}[theorem]{Definition}
\crefname{definition}{Definition}{Definitions}
\Crefname{definition}{Definition}{Definitions}
\newtheorem{fact}[theorem]{Fact}
\crefname{fact}{Fact}{Facts}
\Crefname{fact}{Fact}{Facts}
\theoremstyle{remark}
\crefname{remark}{Remark}{Remarks}
\Crefname{remark}{Remark}{Remarks}
\newcommand{\proofnamefont}{}
\renewcommand{\proofnamefont}{\bfseries}
\newcommand{\remove}[1]{}
\newcommand{\xnrightarrow}[2][]{%
  \mathrel{%
    \vphantom{\xrightarrow[#1]{#2}}%
    \ooalign{\hidewidth\neg@arrow\hidewidth\cr$\m@th\xrightarrow[#1]{#2}$\cr}%
  }%
}
\newcommand{\neg@arrow}{%
  $\m@th\vcenter{\hbox{%
    \rotatebox[origin=c]{-45}{\scalebox{1.5}[1]{$\m@th\scriptscriptstyle|$}}%
  }}$
}
\newcommand{\constrained}{\xrightarrow{\mathrm{c-ram}}}
\newcommand{\notconstrained}{\xnrightarrow{\mathrm{c-ram}}}
\newcommand{\cram}{\mathrm{c-ram}}
\newcommand{\aram}{\mathrm{a-ram}}
\newcommand{\antiramsey}{\xrightarrow{\mathrm{a-ram}}}
\newcommand{\notantiramsey}{\xnrightarrow{\mathrm{a-ram}}}
\newcommand{\vecT}{\bar{T}}
\renewcommand{\Pr}{\mathbb{P}}
\newcommand{\cP}{\mathcal{P}}
\newcommand{\cF}{\mathcal{F}}
\newcommand{\eps}{\varepsilon}
\title{Thresholds for constrained Ramsey and anti-Ramsey problems}
\author{
    Natalie Behague\thanks{Mathematics Institute, University of Warwick, Coventry CV4 7AL, UK. \texttt{natalie.behague@warwick.ac.uk}. This research was supported by a PIMS postdoctoral fellowship while at the University of Victoria. 
    }    
    \and
    Robert Hancock\thanks{
    Mathematical Institute, University of Oxford, Andrew Wiles Building, Radcliffe Observatory Quarter,  Woodstock Rd, Oxford OX2~6GG, UK.  \texttt{robert.hancock@maths.ox.ac.uk}. Research supported by ERC Advanced Grant 883810 and by a Humboldt Research Fellowship at Heidelberg University.
    }
    \and
    Joseph Hyde\thanks{Department of Mathematics, King's College London, Strand Building, Strand Campus, Strand, London, WC2R~2LS, UK. \texttt{josephhyde@uvic.ca}. 
    } 
    \and
    Shoham Letzter\thanks{
    Department of Mathematics, 
    University College London, 
    Gower Street, London WC1E~6BT, UK. 
     \texttt{s.letzter}@\texttt{ucl.ac.uk}. 
    Research supported by the Royal Society.
    }
    \and
    Natasha Morrison\thanks{Department of Mathematics and Statistics, University of Victoria, 3800 Finnerty Road, Victoria, BC, V8P~5C2, Canada. \texttt{nmorrison@uvic.ca}. 
    Research supported by NSERC Discovery Grant RGPIN-2021-02511 and NSERC Early Career Supplement DGECR-2021-00047.
    } 
}
\definecolor{lessgarishgreen}{rgb}{0,0.6,0}
\begin{document}

\maketitle

\begin{abstract}
Let $H_1$ and $H_2$ be graphs. A graph $G$ has the \emph{constrained Ramsey property for $(H_1,H_2)$}
if every edge-colouring of $G$ contains either a monochromatic copy of $H_1$ or a rainbow copy of $H_2$. 
Our main result gives a 0-statement for the constrained Ramsey property in $G(n,p)$ whenever $H_1 = K_{1,k}$ for some $k \ge 3$ and $H_2$ is not a forest. Along with previous work of Kohayakawa, Konstadinidis and Mota, this resolves the constrained Ramsey property for all non-trivial cases with the exception of $H_1 = K_{1,2}$, which is equivalent to the anti-Ramsey property for $H_2$.

For a fixed graph $H$, we say that $G$ has the \emph{anti-Ramsey property for $H$} if any proper edge-colouring of $G$ contains a rainbow copy of $H$. We show that the 0-statement for the anti-Ramsey problem in $G(n,p)$ can be reduced to a (necessary) colouring statement, and use this to find the threshold for the anti-Ramsey property for some particular families of graphs.
\end{abstract}
\section{Introduction}

For fixed graphs $H_1,H_2$, we say that a graph $G$ has the \emph{constrained Ramsey property for $(H_1,H_2)$}, denoted $G \constrained (H_1,H_2)$,
if any edge-colouring of $G$ contains either a monochromatic copy of $H_1$ or a rainbow copy of $H_2$, i.e.\ a copy of $H_2$ where each edge has a different colour.
It is not hard to see that $G$ can not have the constrained Ramsey property unless either $H_1$ is a star or $H_2$ is a forest. 
Indeed, label $V(G)$ (arbitrarily) as $v_1, \ldots, v_{v(G)}$ and define a colouring $\chi$ such that $\chi(v_i v_j) = \min\{i,j\}$ for every edge $v_iv_j$.
Then the only monochromatic graphs in $\chi$ are stars (as every edge coloured $i$ touches the vertex $v_i$), 
and every cycle in $\chi$ contains at least two edges of the same colour (if $i$ is the smallest index of a vertex in a cycle $C$,
then $C$ has two edges coloured $i$). 
Hence we have $G \notconstrained (H_1,H_2)$ if $H_1$ is not a star and $H_2$ is not a forest. 

In this paper, we are interested in determining when a graph typically has the constrained Ramsey property for a given pair $(H_1,H_2)$. 
This is formalised by asking when the \emph{random graph} $G(n,p)$ (which has $n$ vertices and where each possible edge is included independently with probability $p$) is likely to have the constrained Ramsey property. 

A \emph{graph property} is a collection of graphs, and we say that a graph property $\cP$ is \emph{monotone (increasing)} if,
whenever $H$ and $G$ are graphs satisfying $H \in \cP$, $H \subseteq G$ and $V(G) = V(H)$, then $G \in \cP$.
Throughout this paper, we will say that a function $f: \mathbb{N} \to \mathbb{R}$ is a \emph{coarse threshold function for $\cP$ in $G(n,p)$} if \[\lim\limits_{n \to \infty} \mathbb{P}[G(n,p) \in \cP] =   
\begin{cases}
    0       & \quad \text{if } p = o(f(n)),\\
    1       & \quad \text{if } p = \omega(f(n)),
\end{cases}\] 
and a \emph{semi-sharp threshold function for $\cP$ in $G(n,p)$} 
if there exist constants $c, C > 0$ such that \[\lim\limits_{n \to \infty} \mathbb{P}[G(n,p) \in \cP] =   
\begin{cases}
    0       & \quad \text{if } p \leq  c f(n),\\
    1       & \quad \text{if } p \geq  C f(n).
\end{cases}\]

Bollob\'{a}s and Thomason~\cite{monotone-property} showed that every non-trivial\footnote{A property $\cP$ is \emph{non-trivial} if, for every large enough $n$, there exist $n$-vertex graphs $H$ and $G$ such that $H\in \cP$ and $G\not\in \cP$.}
monotone property $\cP$ for $G(n,p)$ has a coarse threshold. 
We will use the term \emph{0-statement} to refer to the statement for $p = o(f(n))$ or $p \leq cf(n)$ (depending on context), and \emph{1-statement} to refer to the statement for $p = \omega(f(n))$ or $p \geq Cf(n)$. 

Note that a $1$-statement is equivalent to saying that with high probability (that is, with probability tending to $1$ as $n$ tends to infinity, 
which we will henceforth abbreviate to w.h.p.) a graph on $n$ vertices with density much greater than $f(n)$ has the property $\cP$, 
whereas the $0$-statement says that w.h.p.\ a graph on $n$ vertices with density much less than $f(n)$ does not have the property $\cP$.

\begin{question}[Constrained Ramsey]\label{ques:constrainedramsey}
    Let $H_1,H_2$ be graphs such that $H_1$ is a star or $H_2$ is a forest. What is a coarse/semi-sharp threshold function for the constrained Ramsey property for $(H_1, H_2)$ in $G(n,p)$? 
\end{question}

Before discussing what is currently known for various $H_1$ and $H_2$, 
we need to introduce some definitions. For a graph $H$,
we define $d(H) := \frac{e(H)}{v(H)}$ and the \emph{density} of $H$ to be
\[
	m(H) \coloneqq \max\{d(J) : J \subseteq H \text{ and } v(J) \ge 1\}.
\]
We say that $H$ is \emph{balanced} if $m(H) = d(H)$, and \emph{strictly balanced} if for all proper subgraphs $J \subsetneq H$ with $v(J) \ge 1$, we have $d(J) < d(H)$.

Similarly, we define 
\[d_2(H) \coloneqq 
    \begin{cases}
		\frac{e(H) - 1}{v(H) - 2}   & \quad \text{if $e(H) \ge 1$ and $v(H) \geq 3$},\\
        \frac{1}{2}                 & \quad \text{if} \ H \cong K_2,\\    
        0                           & \quad \text{otherwise,}
    \end{cases}\] 
and the \emph{$2$-density of $H$} to be \[m_2(H) \coloneqq \max\{d_2(J): J \subseteq H\}.\] 
    
We say that $H$ is \emph{$2$-balanced} if $m_2(H) = d_2(H)$, and \emph{strictly $2$-balanced} if for all proper subgraphs $J \subsetneq H$, we have $d_2(J) < d_2(H)$.

When $H_1$ is connected and is not a star, the random Ramsey theorem of R\"odl and Ruci\'nski~\cite{rrlower,rr2,rr} states that the threshold for $G(n,p)$ having the Ramsey property for $H_1$ -- that is, for all $r \geq 2$, for any $r$-colouring of the edges of $G(n,p)$ there is a monochromatic copy of $H_1$ -- is $n^{-1/m_2(H_1)}$.
Since when $H_2$ has at least three edges, any $2$-colouring automatically avoids a rainbow copy of $H_2$,
so in this case a $0$-statement for the constrained Ramsey property for $(H_1,H_2)$ holds at $n^{-1/m_2(H_1)}$. 
Collares, Kohayakawa, Moreira and Mota \cite{c-ram_threshold} proved a $1$-statement for all forests $H_2$ with at least three edges,  
and further obtained the location of the threshold for all remaining cases where $H_2$ is a forest.
Note however that the threshold is not always explicit: in some cases it is given in terms of the infimum of $m(F)$ taken over an infinite family of graphs $F$.
See the full collection of known results about thresholds for the constrained Ramsey property for $(H_1, H_2)$ in Table~\ref{tab:constrained} in Section~\ref{sec:conclusion}.

We therefore focus our attention exclusively on the case where $H_1$ is a star $K_{1,k}$ with $k \ge 2$ and $H_2$ is not a forest. 

For any fixed $k \geq 2$, a natural candidate for the threshold function of the constrained Ramsey property for $(K_{1,k}, H_2)$ in $G(n,p)$ is $n^{-1/m_2(H_2)}$. 
Indeed, the expected number of edges of $G(n,p)$ is $\Theta(n^2p)$ and the expected number of copies of $H_2$ in $G(n,p)$ is $\Theta(n^{v(H_2)}p^{e(H_2)})$.
One can observe that if $p = n^{-1/m_2(H_2)}$ and $H_2$ is $2$-balanced then these quantities are of the same order. 
Therefore if $p \leq cn^{-1/m_2(H_2)}$, for a small enough constant $c > 0$, then we have intuitively that every edge belongs to very few (possibly zero) copies of $H_2$.
We can then hope to colour each copy of $H_2$ such that the colourings of different copies barely interact with each other.
In particular, we can hopefully use for each copy $H'$ of $H_2$ a colour $c_{H'}$ twice, such that for any two copies $H'$ and $H''$ of $H_2$, we have $c_{H'} \ne c_{H''}$. This way we avoid a rainbow copy of $H_2$, and it suffices to show that this is possible without creating a monochromatic $K_{1,k}$.

The 1-statement for the constrained Ramsey property in the case where $H_1$ is a star is a result of Kohayakawa, Konstadinidis and Mota \cite{anti-ram_1-statement}. 
Note that in~\cite{anti-ram_1-statement}, the result is instead phrased in terms of the threshold for having the property that any locally $r$-bounded edge-colouring (for every vertex,
no colour appears more than $r$ times on the edges incident to it) produces a monochromatic copy of $H$. This is clearly equivalent to the constrained Ramsey property for $(K_{1,{r+1}},H)$.
\begin{theorem}[{\cite[Theorem 2.2]{anti-ram_1-statement}}]\label{thm:1-statement}
    Let $k \ge 2$ and $H$ be a graph. There exists a constant $C>0$ such that if $p \geq Cn^{-1/m_2(H)}$ then
    \[
    \lim\limits_{n \to \infty} \mathbb{P}[G(n,p) {\constrained} (K_{1,k}, H)] =  1.
    \]
\end{theorem}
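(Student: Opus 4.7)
The plan is to prove this 1-statement by contradiction, using the hypergraph container method in the spirit of Nenadov--Steger's proof of the Rödl--Ruciński theorem. Set $p = C n^{-1/m_2(H)}$ for a sufficiently large constant $C$, and suppose towards a contradiction that with probability bounded away from zero, $G(n,p)$ admits a locally $(k-1)$-bounded edge-colouring $\chi$ with no rainbow copy of $H$; equivalently, a partition of $E(G(n,p))$ into classes, each of maximum degree at most $k-1$, such that every copy of $H$ in $G(n,p)$ contains two edges lying in a common class. The goal is to show that the expected number of such colourings is $o(1)$.

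First I would reduce to the case that $H$ is strictly $2$-balanced (standard), so that Janson-type concentration for the number of copies of $H$ in $G(n,p)$ and a matching upper-tail bound on the number of copies through any fixed pair of edges are available. Then I would apply the hypergraph container theorem to the hypergraph on vertex set $E(K_n)$ whose hyperedges are the edge sets of copies of $H$, obtaining a family $\mathcal{C}$ of $H$-sparse containers with $\log|\mathcal{C}| = O(n^{2 - 1/m_2(H)})$, the key property being that every graph with "few" copies of $H$ is contained in some $C \in \mathcal{C}$. Next, encode each bad colouring as follows: for every copy of $H$ in $G(n,p)$ select a monochromatic pair of edges witnessing the non-rainbowness, and exploit the bounded degree of each colour class (at most $k-1$) to bound the number of distinct labels used, so that the selected edges together with their colour data lie in a union of not too many containers. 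A first-moment bound over the choice of containers, combined with standard concentration (Janson for $H$-counts, Chernoff for edges in each container), should then yield that bad colourings are exponentially unlikely once $C$ is large enough.

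The main obstacle is the encoding step. Unlike classical Ramsey, here a single colour class is not forced to be $H$-free — it can easily contain many copies of $H$ provided they share no vertex in high multiplicity — so the container theorem cannot be applied class by class. Instead one must build a global auxiliary structure (for instance, a minimal covering multigraph obtained by selecting, for each copy of $H$, a same-colour pair of edges) whose existence can be container-controlled, and then recover the bad colouring from this structure using $O(n \log n)$ bits of extra information. Tuning the encoding so that the number of possible witnesses times the probability of their realisation in $G(n,p)$ is $o(1)$ — balancing the container budget against the expected number of copies of $H$ — is where the technical work is concentrated.
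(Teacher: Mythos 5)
The paper does not prove this theorem; it is quoted from Kohayakawa, Konstadinidis and Mota~\cite{anti-ram_1-statement} and used as a black box to supply the $1$-statements of Theorems~\ref{thm:constrained} and~\ref{thm:constrained_K3}. The original argument in~\cite{anti-ram_1-statement} is based on the sparse regularity method, not on hypergraph containers, so your plan is a genuinely different route and there is no internal proof in this paper to compare it against.

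Evaluating your sketch on its own terms: you have correctly noted the standard reduction to strictly $2$-balanced $H$, and you have put your finger on the real obstruction. In Nenadov--Steger's container proof of the $2$-colour random Ramsey theorem, containers are applied to each colour class because each class is required to be $H$-free; here a bad colouring has unboundedly many classes, none of which need be $H$-free, and the only hypothesis available is that every copy of $H$ receives a repeated colour. Your proposed fix --- pick, for each copy of $H$, a same-coloured pair of edges, assemble these pairs into a ``witness'' structure, and recover the colouring from a container plus $O(n\log n)$ extra bits --- is exactly where a proof would have to live, and it is not carried out. Two concrete worries: (a) it is not specified what container theorem applies to the witness structure, since the union of selected pairs is not $H$-free and need not be contained in any $H$-sparse graph, so the container hypothesis is not obviously met; and (b) the entropy budget looks far too small: at $p=Cn^{-1/m_2(H)}$ there are $\Theta(n^{2-1/m_2(H)})$ edges, a locally $(k-1)$-bounded colouring partitions them into classes in $n^{\Theta(n^{2-1/m_2(H)})}$ ways, and $O(n\log n)$ bits of auxiliary data give only $n^{O(n)}$ possibilities, which cannot index that partition. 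Unless the encoding exploits much more structure than is stated (so that its cost is dominated by the container-count term and the concentration of the number of $H$-copies), the first-moment calculation cannot close. As written, this is a plausible research direction with the central combinatorial step left open, not a proof.
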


In the following two theorems we complete the picture for the constrained Ramsey problem in random graphs, with the exception of the case when $H_1 = K_{1,2}$. 
Note that the $1$-statements of these results follow immediately from Theorem~\ref{thm:1-statement}.

\begin{theorem}\label{thm:constrained}
    Let $k \ge 3$ and $H$ be a graph that is not a forest. If $k \ge 4$, or $m_2(H) \ne 2$, or $m_2(H)=2$ and $H$ contains a strictly $2$-balanced graph $J \ne K_3$ with $m_2(J)=2$, then there exist constants $c, C>0$ such that
    \[\lim\limits_{n \to \infty} \mathbb{P}[G(n,p) {\constrained} (K_{1,k}, H)] =   
\begin{cases}
    0       & \quad \text{if } p \leq cn^{-1/m_2(H)},\\
    1       & \quad \text{if } p \geq Cn^{-1/m_2(H)}.
\end{cases}\]  
\end{theorem}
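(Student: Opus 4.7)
The $1$-statement is immediate from \Cref{thm:1-statement}, so we concentrate on the $0$-statement: for $p \le c n^{-1/m_2(H)}$ with a sufficiently small constant $c > 0$, we must exhibit w.h.p.\ an edge-colouring of $G := G(n,p)$ with no monochromatic $K_{1,k}$ and no rainbow copy of $H$. As a first reduction, I would pick a strictly $2$-balanced subgraph $H^* \subseteq H$ with $d_2(H^*) = m_2(H)$. Since $H$ is not a forest, $m_2(H) > 1$ and hence $H^*$ contains a cycle. Any rainbow copy of $H$ contains a rainbow copy of $H^*$, so it suffices to colour $G$ so as to avoid both monochromatic $K_{1,k}$ and rainbow $H^*$. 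Henceforth we replace $H$ by $H^*$ and assume $H$ is strictly $2$-balanced.

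Next, I would establish the sparse local geometry of the $H$-copies in $G$. Using Janson's inequality together with a union bound over denser configurations -- exploiting the fact that every proper $J \subsetneq H$ satisfies $d_2(J) < d_2(H)$ -- I would show that w.h.p., for some constant $C_0 = C_0(H)$:
\begin{enumerate}[label=(\roman*)]
    \item each edge of $G$ lies in at most $C_0$ copies of $H$;
    \item no two distinct copies share more than one edge (the union of two copies sharing a subgraph of density $\geq 1$ is strictly denser than $d_2(H)$, so does not appear at this value of $p$);
    \item the \emph{overlap graph} $L$, whose vertices are the copies of $H$ in $G$ and whose edges are pairs of copies sharing an edge, decomposes w.h.p.\ into connected components of bounded size having a tree-like structure (ruling out long chains or short cycles of shared edges requires one further union bound, which is again sub-threshold).
\end{enumerate}

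The colouring is then built on the components of $L$. Edges of $G$ lying in no copy of $H$ receive their own unique, fresh colours. For each copy $H'$ I would designate a pair $\mathrm{pair}(H') \subseteq E(H')$ of two edges, and force those edges to share a single colour, making $H'$ non-rainbow. The designations are carried out greedily within each (small, tree-like) component of $L$, maintaining two invariants: (a) every copy of $H$ contains at least one designated pair, and (b) at every vertex, each colour is used on at most $k - 1$ incident edges. Since every monochromatic class is then either a singleton or a matched pair, (b) forbids monochromatic $K_{1,k}$ while (a) forbids rainbow $H$.

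The main obstacle is to verify that the greedy pair-selection always succeeds inside a component of $L$: when an edge $e$ is already paired in one copy $H_1'$, any neighbouring copy $H_2'$ containing $e$ must either absorb $e$ into the same colour class (extending it) or choose a pair disjoint from $e$ and its partner. Handling this propagation is exactly where the hypothesis $(H, k) \ne (K_3, 3)$ is used: if $H \ne K_3$ then $e(H) \ge 4$ provides enough freedom to choose pairs -- including non-adjacent pairs -- that avoid forbidden edges in neighbouring copies; alternatively, if $k \ge 4$ one can afford to use adjacent pairs (placing up to $k - 1 \ge 3$ same-coloured edges at a vertex) to absorb local conflicts. In the excluded case $(K_3, 3)$, triangles have only three mutually adjacent edges and each colour may appear at most twice at every vertex, so neither degree of freedom is available, which is why that case falls outside the present theorem.
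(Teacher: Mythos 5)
Your reduction to strictly $2$-balanced $H$ matches the paper, and the high-level outline (exhibit a colouring after understanding the overlap structure of $H$-copies) is in the right spirit, but the three ``sparse local geometry'' claims at the heart of your plan are all false at the critical density, and this is a fundamental gap rather than a fixable detail.

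Claim (ii) fails already for the simplest overlap: for a strictly $2$-balanced $H$, the expected number of \emph{pairs} of $H$-copies sharing a cherry $P_3$ (two adjacent edges) at $p=n^{-1/m_2(H)}$ is
\[
\Theta\bigl(n^{2v(H)-3}p^{2e(H)-2}\bigr) = \Theta\bigl(n^{\,2v(H)-3-2(v(H)-2)}\bigr) = \Theta(n),
\]
so such pairs are abundant, not absent. Claim (i) is also false: the expected number of $H$-copies through a fixed edge is $\Theta(1)$, but the maximum over all $\Theta(n^2p)$ edges grows (roughly $\log n/\log\log n$). And claim (iii) is the crux of the whole problem: the overlap graph of all $H$-copies in $G(n,p)$ is \emph{not} decomposable into bounded components. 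That is exactly why the paper has to invoke the algorithmic framework of Nenadov, Person, \v{S}kori\'c and Steger: one first runs a preprocessing algorithm (removing pairs of $H$-equivalent edges, and edges in no $H$-copy) to reach an $H$-closed subgraph, and only then does the deep result (\Cref{thm:blocksarefinite} and \Cref{cor:blocksaresparse}) guarantee a decomposition into bounded, sparse $H$-blocks. Your ``one further union bound'' will not deliver this -- it is a genuine theorem requiring a careful analysis of densities of all possible block configurations.

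Two further structural points you miss. First, even once a bounded-block decomposition is in hand, one still needs a \emph{deterministic colouring statement}: every graph $B$ with $m(B)\le m_2(H)$ satisfies $B\notconstrained(K_{1,3},H)$. For $H\neq K_3, C_4$ this is \cite[Lemma~26]{npss} (\Cref{lem:notc4}); for $C_4$ it requires a separate argument via Petersen's theorem (\Cref{lem:c4}); your greedy ``pair-selection'' sketch has no analogue of this and no reason to terminate successfully. Second, the paper treats $H=K_3$ with a completely different decomposition (triangle-connected subgraphs and $\rho$-minimal triangle sequences), because the $H$-block machinery breaks down for $K_3$: every edge of a $K_3$-block is automatically $K_3$-closed, so the preprocessing never removes anything useful. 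Your proposal lumps the $K_3, k\ge 4$ case together with the general one via ``adjacent pairs absorb conflicts,'' but this is not supported by any of the structure you have established.
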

The cases not covered by Theorem~\ref{thm:constrained}, where $k=3$, $m_2(H)=2$ and the unique strictly $2$-balanced graph $J$ with $m_2(J)=2$ contained in $H$ is $K_3$ (e.g.~when $H$ itself is $K_3$), require a weaker 0-statement.
\begin{theorem}\label{thm:constrained_K3} Let $H$ be a graph where $m_2(H)=2$ and $K_3$ is the unique strictly $2$-balanced graph with $2$-density equal to $2$ contained in $H$.
 There exists a constant $C > 0$ such that
    \[\lim\limits_{n \to \infty} \mathbb{P}[G(n,p) {\constrained} (K_{1,3}, H)] =   
\begin{cases}
    0       & \quad \text{if } p = o\!\left(n^{-1/2}\right),\\
    1       & \quad \text{if } p \geq Cn^{-1/2}.
\end{cases}\]  
Moreover, for any constant $c>0$ there exists $\zeta= \zeta(c) > 0$ such that if $p = cn^{-1/2}$ then $\lim\limits_{n \to \infty}\mathbb{P}[G(n,p) {\constrained} (K_{1,3}, K_3)] \ge \zeta$. 
\end{theorem}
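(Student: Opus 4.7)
The $1$-statement follows from \Cref{thm:1-statement}, so I focus on the $0$-statement and the moreover.

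For the $0$-statement at $p = o(n^{-1/2})$, the plan is to exhibit, with high probability, an edge-colouring of $G = G(n,p)$ with no monochromatic $K_{1,3}$ and no rainbow $K_3$, in the spirit of the heuristic behind \Cref{thm:constrained}. First I would use first-moment bounds to show that, with high probability, every edge of $G$ lies in only a few triangles and that the auxiliary graph $\mathcal{T}(G)$—whose vertices are the triangles of $G$ and whose edges join pairs of triangles sharing an edge—has locally sparse structure. Then I would process the triangles in some order, designating for each triangle two of its edges to share a fresh colour, and finishing by giving every unpaired edge its own unique colour. The main obstacle is handling books $B_k$ of triangles sharing a common edge $uv$: if many triangles through $uv$ pair $uv$ with an edge at (say) $u$, then $u$ accumulates three edges of the same colour. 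This is precisely the failure mode that prevents extending the argument to $p = cn^{-1/2}$, and is why the $0$-statement is stated only for $p = o(n^{-1/2})$.

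For the moreover at $p = cn^{-1/2}$, the plan is to exhibit a strictly balanced graph $W$ with $d(W) = 2$ and $W \constrained (K_{1,3}, K_3)$. A standard Poisson approximation then shows that the number of copies of $W$ in $G(n, cn^{-1/2})$ converges in distribution to a Poisson variable with mean $\lambda(c) = c^{e(W)}/|\operatorname{Aut}(W)| > 0$, so $G$ contains a copy of $W$ with probability tending to $1 - e^{-\lambda(c)} > 0$, and on this event the constrained Ramsey property holds. The main obstacle is finding such a $W$: the simplest balanced density-$2$ candidates like $K_5$ and the octahedron $K_{2,2,2}$ are $4$-regular and admit $2$-factor decompositions, which yield valid $2$-edge-colourings (automatically with no rainbow triangle and no monochromatic $K_{1,3}$, since each vertex sees exactly $2$ edges of each colour). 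Hence $W$ must contain vertices of degree at least $5$ together with triangles whose pairings conflict globally, while remaining strictly balanced with $d(W) = 2$ so that $W$ actually appears with positive probability at $p = cn^{-1/2}$.
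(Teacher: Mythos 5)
Your high-level reading of the theorem is right (the $1$-statement is free, the $0$-statement needs a colouring construction, the moreover needs a witness graph at density $2$), but both of the substantive parts of your plan have real gaps.

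For the $0$-statement, your plan rests on the premise that w.h.p.\ every edge of $G(n,p)$ lies in only a few triangles when $p = o(n^{-1/2})$. This is false when $p$ approaches $n^{-1/2}$ slowly: the book $B_k$ (that is, $k$ triangles sharing a common edge) has $k+2$ vertices and $2k+1$ edges, so $m(B_k) = (2k+1)/(k+2) < 2$ for every $k$, and with $p = n^{-1/2-\delta}$ the expected number of copies of $B_k$ is $n^{3/2 - (2k+1)\delta}$, which tends to infinity for every $k < 3/(4\delta) - 1/2$. So edges belonging to arbitrarily (but boundedly) many triangles do appear. More importantly, ``process the triangles in some order, designating for each triangle two of its edges to share a fresh colour'' is exactly the kind of greedy scheme that breaks in these books and in small dense pieces such as $K_4$ or $K_5^-$, and you acknowledge you do not know how to control it. The paper instead decomposes $G$ into edge-disjoint triangle-connected components, proves (\Cref{lem:reduction-to-colouring-triangle}) via a counting argument over so-called $\rho$-minimal triangle sequences that w.h.p.\ every triangle-connected subgraph $H$ of $G(n,p)$ with $p = o(n^{-1/2})$ satisfies $e(H) < 2v(H)$, and then gives a careful inductive colouring (\Cref{lem:k=3_triangle_colouring}) of any triangle-connected graph with $e(H) < 2v(H)$, working along a triangle sequence and splitting into cases according to whether the base $H_0$ is $K_3$, $K_4$ or $K_5^-$ and how many irregular steps occur. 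Your plan contains neither the structural reduction nor the colouring argument, so the core of the $0$-statement is missing.

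For the moreover, your overall strategy is correct and your elimination of the most natural density-$2$ candidates ($K_5$ and the octahedron $K_{2,2,2}$ via $2$-factor decompositions) is a genuine and correct observation, but the plan stops exactly where the work starts: you never produce a graph $W$ with $m(W) = 2$ and $W \constrained (K_{1,3},K_3)$. The paper's witness is $K_6$ minus a triangle (\Cref{lem:tight_triangle}), which has $6$ vertices and $12$ edges (so density exactly $2$ and, one checks, balanced), has three vertices of degree $5$, and is shown by a short case analysis to admit no edge-colouring that is simultaneously rainbow-$K_3$-free and $K_{1,3}$-free. Combined with the Janson-inequality fact that any balanced graph $F$ appears in $G(n, cn^{-1/m(F)})$ with probability bounded away from $0$ (\Cref{claim:subgraphappearance}), this gives the moreover. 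Without the explicit $W$ and the verification that $W \constrained (K_{1,3},K_3)$, your proposal does not prove this part either.
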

The second part of the statement shows that for $H=K_3$ the threshold is coarse, and it cannot be improved to semi-sharp.

The proofs of \Cref{thm:constrained,thm:constrained_K3} can be found in Section~\ref{sec:constrained}.

For a fixed graph $H$, we say that $G$ has the \emph{anti-Ramsey property for $H$}, written $G \antiramsey H$, if any \emph{proper} edge-colouring of $G$ contains a rainbow copy of $H$.
If $H_1 = K_{1,2}$, the star with two edges, then an edge-colouring of $G$ avoiding a monochromatic copy of $H_1$ is precisely a proper colouring. 
That is, having the constrained Ramsey property for ($K_{1,2}, H_2$) is equivalent to having the anti-Ramsey property for $H_2$. 

The study of the anti-Ramsey problem was initiated by R\"{o}dl and Tuza \cite{anti-ram_origin} who focused on the case when $H$ is a cycle.
As with the constrained Ramsey problem, we consider the question of finding a threshold for $G(n,p)$ having the anti-Ramsey property for a given $H$. 
\begin{question}[Anti-Ramsey]\label{ques:antiramsey}
    Let $H$ be a graph. Do there exist constants $c, C>0$ such that \[\lim\limits_{n \to \infty} \mathbb{P}[G(n,p) \antiramsey H] =   
\begin{cases}
    0       & \quad \text{if } p \leq cn^{-1/m_2(H)},\\
    1       & \quad \text{if } p \geq Cn^{-1/m_2(H)}.
\end{cases}\]  
\end{question}
As before, we refer to the two parts of this question as the \emph{0-statement for anti-Ramsey} and \emph{1-statement for anti-Ramsey} respectively.

 Note that, when $r = 2$, Theorem~\ref{thm:1-statement} is precisely the 1-statement of the anti-Ramsey problem, and so immediately yields a positive answer for the $1$-statement in Question~\ref{ques:antiramsey}.

In \cite{npss}, Nenadov, Person, \v{S}kori\'{c} and Steger introduced a general framework for proving 0-statements in various settings, and applied this to show that the $0$-statement for anti-Ramsey holds for sufficiently long cycles and sufficiently large complete graphs. 
These results were extended in \cite{anti-ram_cycles} for cycles and in \cite{anti-ram_complete} for complete graphs to obtain a 0-statement for anti-Ramsey for all cycles and complete graphs on at least four vertices.

The threshold here is $n^{-1/m_2(H)}$ for all cases except for when $H$ is a copy of $C_4$ or $K_4$, where it is instead given by the threshold for the appearance of a graph which has the anti-Ramsey property for $H$. 
Notice that if there exists a finite graph $G$ such that $m(G) < m_2(H)$ and $G \antiramsey H$, then for any $p = \omega(n^{-1/m(G)})$ (in particular for $p=n^{-1/m_2(H)}$) $G$ occurs in $G(n,p)$ w.h.p, and so $G(n,p) \antiramsey H$ w.h.p.
Therefore $n^{-1/m_2(H)}$ cannot be a threshold, and trivially a $1$-statement holds at $n^{-1/m(G)} = o(n^{-1/m_2(H)})$. 
Such graphs $G$ exist for $C_4$ and $K_4$. 

Similarly, if $H$ is a copy of $K_3$, then $H$ is rainbow in any proper colouring of $G(n,p)$. It follows that the threshold for $G(n,p)$ having the anti-Ramsey property for $K_3$ corresponds with the threshold for the appearance of a copy of $K_3$ which is at $n^{-1/m(K_3)} = n^{-1}$ (see \cite{bo}). 
This idea was pursued further in \cite{anti-ram_non-balanced} where it was demonstrated that $n^{-\frac{1}{m_2(H)}}$ is not a threshold for the anti-Ramsey property for graphs of the form $B_t \oplus F$:
here $B_t$ is the book graph formed by $t$ triangles all sharing a common edge, $F$ is any graph satisfying $1<m_2(F)<2$, and $\oplus$ is the graph operation of gluing two graphs together along an edge.

For the anti-Ramsey problem, we have some partial progress, reducing proving the $0$-statement to a (necessary) colouring statement. 
By the reasoning in the previous paragraph,
in order to prove that $n^{-1/m_2(H)}$ is a semi-sharp threshold, it is necessary to prove that
for all graphs $G$ with $m(G) \leq m_2(H)$, we have $G \notantiramsey H$.
For a coarse threshold one may replace $\leq$ by $<$ in the above statement.
We prove that it is in fact sufficient to prove the above colouring statement, for most graphs $H$.

\begin{theorem}\label{thm:antiramsey0}
    Let $H$ be a strictly $2$-balanced graph on at least five edges with $m_2(H)> 1$. To prove the $0$-statement of \Cref{ques:antiramsey} it suffices to prove for all graphs $G$ with $m(G) \leq m_2(H)$ that $G \notantiramsey H$. 
\end{theorem}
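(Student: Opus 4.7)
The plan is to adapt the Nenadov--Person--\v{S}kori\'{c}--Steger framework \cite{npss} for 0-statements of random-graph Ramsey-type problems to the anti-Ramsey setting. Fix a small constant $c > 0$ and let $p \le c n^{-1/m_2(H)}$. Assuming the colouring statement, that $G \notantiramsey H$ for every graph $G$ with $m(G) \le m_2(H)$, I aim to show that w.h.p.\ $G = G(n, p)$ admits a proper edge-colouring with no rainbow copy of $H$.

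The first and crucial step is a structural lemma: w.h.p.\ there exists a subgraph $G^* \subseteq G$ with $m(G^*) \le m_2(H)$ such that every copy $H'$ of $H$ in $G$ has at most one edge in $E(G) \setminus E(G^*)$, and each edge outside $G^*$ lies in a bounded number $D = D(c, H)$ of copies of $H$. To produce $G^*$, I would iteratively prune edges that extend to few copies of $H$, in a carefully chosen order. Because $H$ is strictly $2$-balanced, every proper subgraph $J \subsetneq H$ satisfies $d_2(J) < m_2(H)$; in particular $d_2(H-e) < m_2(H)$ for every $e \in E(H)$, and standard first- and second-moment estimates (controlled by Janson's inequality) bound the number of such ``light'' extensions at $p = c n^{-1/m_2(H)}$. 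A union bound over small subgraphs of density exceeding $m_2(H)$ then shows no dense residue survives the pruning.

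Given the core $G^*$, the hypothesis provides a proper edge-colouring $\chi^* \colon E(G^*) \to \mathbb{N}$ with no rainbow copy of $H$. It remains to extend $\chi^*$ to a proper edge-colouring $\chi$ of $G$ still avoiding rainbow copies of $H$. For each $e \in E(G) \setminus E(G^*)$, let $\mathcal{R}(e)$ denote the family of copies $H'$ of $H$ through $e$ for which $H' - e \subseteq G^*$ is $\chi^*$-rainbow. To avoid making any $H' \in \mathcal{R}(e)$ rainbow, $\chi(e)$ must lie in the $(e(H)-1)$-element palette $\chi^*(E(H') \setminus \{e\})$, and for $\chi$ to be proper it must also avoid the colours already used on edges incident to $e$. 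Since $|\mathcal{R}(e)| \le D$ by the structural lemma and $e(H) \ge 5$ gives at least four palette choices per constraint, a list-colouring or random-assignment argument (applied to a suitable auxiliary hypergraph on the outside edges) produces a valid $\chi(e)$ for every $e$; if $\mathcal{R}(e) = \emptyset$, any fresh colour works. The condition $m_2(H) > 1$ ensures $H$ is not a forest, placing the counts in the regime where the moment estimates apply.

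The main obstacle is the structural lemma, specifically obtaining the ``at most one edge outside'' guarantee together with the sharp density bound $m(G^*) \le m_2(H)$. A naive single-pass pruning yields only a weaker structural statement (for instance, every copy has at least one edge in $G^*$), so the iteration order and a carefully chosen potential function exploiting strict $2$-balancedness are essential. Handling the conflicts in the extension step -- where the proper-colouring condition competes with the palette constraints imposed by the $\mathcal{R}(e)$ -- is a secondary technical hurdle, to be addressed either by a union-bound / alteration argument or by a structured greedy order that exploits the local sparsity of $G$ around each outside edge.
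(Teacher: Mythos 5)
Your proposal follows a genuinely different route from the paper's, and it contains gaps that I don't think can be repaired without essentially switching to the paper's strategy.

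First, your structural lemma asks for a single subgraph $G^*$ with $m(G^*) \le m_2(H)$ such that every copy of $H$ has at most one edge outside $G^*$. The NPSS machinery (which the paper invokes via \Cref{thm:blocksarefinite}, \Cref{cor:blocksaresparse} and \Cref{lem:fclosedpartition}) gives a decomposition of the $H$-closed part of $G$ into edge-disjoint \emph{blocks}, each of which is small and sparse; it does \emph{not} give a single globally sparse core. When $m_2(H) > 1$ and $p = cn^{-1/m_2(H)}$, the average degree of $G(n,p)$ is $\Theta(np) \to \infty$, so a subgraph $G^*$ containing essentially all the $H$-copies is expected to have many vertices that lie in several blocks, and the density of the union of blocks can exceed $m_2(H)$ even though each individual block is sparse. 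Demanding $m(G^*) \le m_2(H)$ for a global core is therefore a much stronger requirement than what the framework delivers, and it is not clear how to achieve it. The paper avoids this by applying the colouring hypothesis to each block separately and using disjoint colour sets; this is exactly why \Cref{lem:fclosedpartition} (every $H$-copy lies in a single block) is the right tool.

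Second, and more seriously, your extension step breaks. To prevent $H' \in \mathcal{R}(e)$ from becoming rainbow, you need $\chi(e)$ to equal one of the $e(H)-1$ colours on $E(H')\setminus\{e\}$; and this must hold \emph{simultaneously} for every $H' \in \mathcal{R}(e)$, while also avoiding the colours on all edges of $G$ incident to $e$. The palettes $\chi^*(E(H')\setminus\{e\})$ for different $H' \in \mathcal{R}(e)$ can easily be disjoint (nothing forces the copies of $H$ through $e$ to share colours), in which case there is no admissible value of $\chi(e)$ at all, regardless of how clever the list-colouring argument is. Bounded degeneracy of the conflict hypergraph does not help when the lists can be empty. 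The paper sidesteps this entirely: instead of fixing a colouring of a core and then extending it to the outside edges, Algorithm~\textsc{Rainbow-colour} pre-emptively \emph{pairs up} vertex-disjoint $H$-equivalent edges and gives each pair the same fresh colour, so that any $H$-copy using either edge automatically has a repeat. The content of \Cref{prop:superspacious} (that deleting any two edges from a strictly $2$-balanced graph on $\ge 5$ edges leaves two vertex-disjoint edges) is precisely what guarantees that, after this pairing loop terminates, every surviving $H$-copy has at least three closed edges, so $\hat{G}$ is $H$-closed and the block decomposition applies. You have not identified a substitute for this step, and I do not see how your ``palette'' constraints can be made consistent without something of this flavour.
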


Note that when attempting to confirm the $0$-statement for anti-Ramsey, it is sufficient to consider the case when $H$ is strictly $2$-balanced. 
Indeed, otherwise, take a minimal subgraph $H' \subseteq H$ such that $m_2(H) = d_2(H')$, so that $H'$ is strictly 2-balanced.
If a proper colouring of $G(n,p)$ contains no rainbow copy of $H'$, it also contains no rainbow copy of $H$.

We prove Theorem~\ref{thm:antiramsey0} in Subsection~\ref{sec:anti-Ram_reduction}. Note that the only strictly $2$-balanced graphs excluded by the theorem are $K_3$, $C_4$ or a `cherry' $P_3$ (the path with two edges). For all of these cases, the colouring statement within is not true anyway 
(see \cite{anti-ram_cycles} for $C_4$, and notice that it fails trivially for $K_3$ and $P_3$ because every proper colouring of either of these graphs is rainbow), and the threshold is instead given by the appearance of a certain graph with the anti-Ramsey property. 

We prove the colouring statement for some special cases of $H$, including when $H$ is a $d$-regular graph on at least $4d$ vertices, in Subsection~\ref{sec:anti-Ram_special}. However, Question \ref{ques:antiramsey} remains open in general.

\subsection*{Organisation}
We prove Theorems~\ref{thm:constrained} and~\ref{thm:constrained_K3} in Section~\ref{sec:constrained} and we prove Theorem~\ref{thm:antiramsey0} in Section~\ref{sec:anti}.
In Section~\ref{sec:conclusion}, we give tables which provide a list of all known results for the constrained Ramsey and anti-Ramsey thresholds,
addressing what type of threshold occurs in each case, and we briefly discuss the remaining open cases for the anti-Ramsey threshold. 

\section{Anti-Ramsey results}\label{sec:anti}

In this section we prove \Cref{thm:antiramsey0} which reduces the $0$-statement of \Cref{ques:antiramsey} to a colouring statement (see \Cref{sec:anti-Ram_reduction}). 
We then apply it to give a family of graphs $H$ which satisfy the $0$-statement of the question (see \Cref{sec:anti-Ram_special}).

\subsection{Reduction to the colouring statement}
\label{sec:anti-Ram_reduction}

As preparation for the proof of \Cref{thm:antiramsey0}, we list some well-known facts regarding strictly $2$-balanced graphs.

\begin{fact}\label{fact:strictly2balancedprops}
    Let $H$ be a strictly $2$-balanced graph with $m_2(H) > 1$. Then the following hold.
    \begin{itemize}
        \item $H$ has minimum degree at least $2$.
        \item $H$ is $2$-connected (see \cite[Lemma~3.3]{ns}). 
    \end{itemize}
\end{fact}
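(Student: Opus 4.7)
The plan is to prove both bullets by contradiction, exploiting the inequality $d_2(J) < d_2(H)$ that holds for every proper subgraph $J \subsetneq H$. The hypothesis $m_2(H) > 1$ translates, via $\frac{e(H)-1}{v(H)-2} > 1$, into the arithmetic bound $e(H) > v(H) - 1$, and hence by integrality $e(H) \geq v(H)$. In particular $v(H) \geq 3$, so the first case of the definition of $d_2$ applies to $H$ itself. This single consequence will drive both arguments.

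For the minimum degree bound, suppose some vertex $v$ satisfies $d_H(v) \leq 1$, and set $H' := H - v$. Then $v(H') = v(H) - 1$ and $e(H') \in \{e(H), e(H) - 1\}$ depending on whether $d(v) = 0$ or $d(v) = 1$. The only case where $v(H') < 3$ is $v(H) = 3$, which combined with $e(H) \geq v(H) = 3$ forces $H = K_3$, which has minimum degree $2$ already, so that case can be dismissed. Otherwise $d_2(H') = \frac{e(H')-1}{v(H')-2}$, and cross-multiplying against $d_2(H) = \frac{e(H)-1}{v(H)-2}$ in each of the two sub-cases $d(v) \in \{0,1\}$ shows that $d_2(H') \geq d_2(H)$ follows from $e(H) \geq v(H) - 1$. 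This contradicts strict $2$-balancedness.

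For 2-connectedness, suppose $H$ is not 2-connected. Then $H$ admits a decomposition $H = H_1 \cup H_2$ with $H_1, H_2 \subsetneq H$ proper, $|V(H_1) \cap V(H_2)| \leq 1$, and $e(H_1) + e(H_2) = e(H)$; this simultaneously covers the disconnected case (empty intersection, $v(H_1) + v(H_2) = v(H)$) and the cut-vertex case (one-vertex intersection, $v(H_1) + v(H_2) = v(H) + 1$). Using the minimum-degree bound from the previous step, I rule out $v(H_i) \leq 2$: any such piece would produce a vertex of degree $0$ or $1$ in $H$, violating $\delta(H) \geq 2$. Hence $v(H_i) \geq 3$ and strict $2$-balancedness yields $(e(H_i)-1)(v(H)-2) < (e(H)-1)(v(H_i)-2)$ for $i = 1, 2$. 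Summing these inequalities and substituting the identities for $v(H_1) + v(H_2)$ and $e(H_1) + e(H_2)$ gives $2e(H) < v(H)$ in the disconnected case and $e(H) < v(H) - 1$ in the cut-vertex case, both contradicting $e(H) \geq v(H)$.

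The only real obstacle is the careful handling of small boundary cases, namely when $v(H)$ or some $v(H_i)$ is too small for the first case of the definition of $d_2$ to apply; all of these are dispatched using $e(H) \geq v(H) \geq 3$ and $\delta(H) \geq 2$, so the argument reduces to elementary cross-multiplication.
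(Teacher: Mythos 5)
Your proof is correct, and it supplies what the paper declines to: the paper records both bullets as a known \emph{Fact} without proof, citing Nenadov and Steger (Lemma~3.3 of~\cite{ns}) for the $2$-connectedness, so there is no in-paper argument to compare against. Your self-contained cross-multiplication argument is sound: $m_2(H) > 1$ gives $e(H) \ge v(H)$; deleting a vertex of degree at most $1$ does not decrease $d_2$ (after dispatching $v(H)=3$, which forces $H = K_3$); and for a cut vertex or a disconnection, summing the strict inequalities $d_2(H_i) < d_2(H)$ over the two pieces $H_1, H_2$ and using $v(H_1) + v(H_2) \in \{v(H), v(H)+1\}$, $e(H_1) + e(H_2) = e(H)$ yields either $2e(H) < v(H)$ or $e(H) < v(H) - 1$, both contradicting $e(H) \ge v(H)$. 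The use of $\delta(H) \ge 2$ to guarantee $v(H_i) \ge 3$ (and $e(H_i) \ge 1$) so that the $d_2$ formula applies to each piece is exactly the right maneuver. One small ordering quibble: you write $\tfrac{e(H)-1}{v(H)-2} > 1$ before establishing $v(H) \ge 3$; cleaner is to observe first that any graph on at most two vertices has $d_2 \le \tfrac12$, so $m_2(H) > 1$ forces $v(H) \ge 3$ and $e(H) \ge 1$ outright, after which the formula for $d_2(H)$ (and then $e(H) \ge v(H)$) follows. This does not affect correctness.
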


We call a graph $H$ \emph{spacious} if for every edge $e$ in $H$ there exists an edge $f$ which is vertex disjoint from $e$. 
The following was observed by R\"{o}dl and Ruci\'{n}ski in \cite{rrlower}. We prove it here for completeness.

\begin{lemma}\label{lem:spacious}
    Let $H$ be a strictly $2$-balanced graph satisfying $m_2(H) > 1$ and $H \neq K_3$. Then $H$ is spacious.
\end{lemma}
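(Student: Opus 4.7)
The plan is to proceed by contradiction: suppose $H$ contains an edge $e = uv$ such that every other edge of $H$ meets $\{u,v\}$, and show that $H$ must then be a book graph on $k \ge 2$ triangles sharing the spine $uv$, which contains a single triangle with the same $2$-density as $H$, contradicting strict $2$-balancedness.

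The first step is structural. By \Cref{fact:strictly2balancedprops}, $\delta(H) \ge 2$. Writing $W := V(H) \setminus \{u, v\}$, the assumption that every edge of $H$ touches $\{u,v\}$ means that each $w \in W$ has all of its neighbours inside $\{u,v\}$, so $\deg(w) \le 2$. Combined with $\delta(H) \ge 2$, every $w \in W$ is adjacent to \emph{both} $u$ and $v$. Hence $H$ is exactly the book graph $B_{|W|}$: the graph consisting of $|W|$ triangles glued along the common edge $uv$. Since $H \neq K_3$, we must have $|W| \ge 2$.

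The second step is a routine computation: $e(H) = 2|W| + 1$ and $v(H) = |W| + 2$, so
\[
d_2(H) \;=\; \frac{e(H) - 1}{v(H) - 2} \;=\; \frac{2|W|}{|W|} \;=\; 2.
\]
On the other hand, for any fixed $w \in W$ the triangle $J$ on $\{u, v, w\}$ is a subgraph of $H$ with $d_2(J) = (3-1)/(3-2) = 2$, and it is a \emph{proper} subgraph because $|W| \ge 2$. Thus $d_2(J) = d_2(H)$, contradicting strict $2$-balancedness of $H$.

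There is no real obstacle here; the crux is simply recognising that the combination of "not spacious" and $\delta(H) \ge 2$ forces the rigid book structure, after which the arithmetic collapse $d_2(K_3) = d_2(B_k) = 2$ supplies the contradiction. The role of the hypothesis $H \neq K_3$ is precisely to guarantee that we have room to pick a $K_3$ as a proper subgraph, and the hypothesis $m_2(H) > 1$ is used only indirectly, via \Cref{fact:strictly2balancedprops}, to secure the minimum degree bound.
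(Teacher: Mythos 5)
Your proof is correct and follows essentially the same route as the paper's: assume non-spaciousness, use the minimum-degree bound from \Cref{fact:strictly2balancedprops} to force every vertex outside the bad edge to be joined to both its endpoints, and then compute $d_2(H) = 2 = d_2(K_3)$ to contradict strict $2$-balancedness together with $H \neq K_3$. The only cosmetic difference is that you name the resulting graph a book and exhibit an explicit proper subtriangle with equal $2$-density, whereas the paper phrases the contradiction directly via $d_2(H) = m_2(K_3)$; the content is identical.
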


\begin{proof}
	Assume for a contradiction that $H$ is not spacious. Then there is an edge $xy$ such that every other edge contains $x$ or $y$. As $m_2(H) > 1$, $H \not= K_2$.
	We claim that every vertex $z \in V(H) \setminus \{x,y\}$ is adjacent to both $x$ and $y$ and to no other vertices. Indeed, it has no neighbours other than $x$ and $y$ by assumption on $xy$, and it has degree at least $2$ by \Cref{fact:strictly2balancedprops}.
	Hence, $H$ contains a triangle (namely $xyz$ for any $z \in V(H) \setminus \{x,y\}$) and $e(H) = 1 + 2(v(H) - 2)$, showing that
	\begin{equation*}
		d_2(H) = \frac{1 + 2(v(H) - 2) - 1}{v(H) - 2} = 2 = m_2(K_3).
	\end{equation*}
	This contradicts the assumption that $H$ is strictly $2$-balanced and is not a triangle.
\end{proof}

Combining \Cref{fact:strictly2balancedprops} and \Cref{lem:spacious} with some of the proof of \cite[Theorem~7]{npss}, we will be able to prove \Cref{thm:antiramsey0}. 
In order for the proof to make sense, we require a number of definitions from the beginning of \cite[Section~2]{npss}.
Since we only deal with graphs -- and not hypergraphs -- in this paper, we will only state these definitions for graphs.

\begin{definition}[$H$-equivalence]
    Given graphs $H$ and $G$, say that two edges $e_1, e_2 \in E(G)$ are \emph{$H$-equivalent}, with notation $e_1 \equiv_H e_2$, if for every copy $H'$ of $H$ in $G$ we have $e_1 \in E(H')$ if and only if $e_2 \in E(H')$.
\end{definition}

\begin{definition}[\emph{$H$-closed} property]
    For given graphs $H$ and $G$, define the property of being \emph{$H$-closed} as follows:

    \begin{itemize}
        \item an edge $e\in E(G)$ is \emph{$H$-closed} if $e$ belongs to at least two copies of $H$ in $G$,
        \item a copy $H'$ of $H$ in $G$ is \emph{$H$-closed} if at least three edges from $E(H')$ are $H$-closed,
        \item a graph $G$ is \emph{$H$-closed} if every vertex and edge of $G$ belongs to at least one copy of $H$ and every copy of $H$ in $G$ is $H$-closed.
    \end{itemize}

\end{definition}

If the graph $H$ is clear from the context, we simply write \emph{closed}.

\begin{definition}[\emph{$H$-blocks}] 
	Given graphs $H$ and $G$, we say that $G$ is an \emph{$H$-block} if $G$ is $H$-closed and for every non-empty proper subset of edges $E' \subsetneq E(G)$ there exists a copy $H'$ of $H$ in $G$ such that $E(H') \cap E' \neq \emptyset$ and $E(H')\setminus E' \neq \emptyset$ (in other words, there exists a copy of $H$ which partially lies in $E'$).
\end{definition}

With these definitions we can now give the graph version of \cite[Theorem~12]{npss} which we will need in order to prove \Cref{thm:antiramsey0}.

\begin{theorem}[{\cite[Theorem~12]{npss}}]\label{thm:blocksarefinite}
    Let $H$ be a strictly $2$-balanced graph. Then there exist constants $c,L > 0$ such that for $p\leq cn^{-1/m_2(H)}$, w.h.p.\ $G \sim G(n,p)$ satisfies that every $H$-block $B \subseteq G$ contains at most $L$ vertices.
\end{theorem}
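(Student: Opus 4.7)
The plan is to prove the theorem by a first moment calculation: for $p \leq c n^{-1/m_2(H)}$, I would show that the expected number of $H$-blocks on more than $L$ vertices appearing as subgraphs of $G(n,p)$ tends to zero, with $c$ and $L$ chosen so that the bound is summable over all isomorphism types. The crux is a density excess lemma: every $H$-block $B$ has $2$-density strictly larger than $m_2(H)$ with an excess growing linearly in $v(B)$, i.e.\ $e(B) \geq m_2(H)(v(B) - 2) + 1 + \eta v(B)$ for some constant $\eta = \eta(H) > 0$.

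To track the excess I would introduce
\[
    \Phi(F) \; := \; (v(F) - 2)\, m_2(H) - (e(F) - 1),
\]
noting that strict $2$-balancedness of $H$ gives $\Phi(H) = 0$ and $\Phi(J) > 0$ for every $J \subsetneq H$ with $v(J) \geq 3$; set $\delta := \min\{\Phi(J) : J \subsetneq H,\, v(J) \geq 3\} > 0$. I would then decompose $B = H_1 \cup H_2 \cup \cdots \cup H_t$ by picking any copy $H_1$ of $H$ in $B$ and greedily adjoining copies $H_{i+1}$ that introduce at least one new edge to $F_i := H_1 \cup \cdots \cup H_i$; the process terminates with $F_t = B$ because $B$ is $H$-closed, so every edge lies in some copy and is eventually reached. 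Writing $J_i := F_{i-1} \cap H_i$, a short computation using $\Phi(H) = 0$ telescopes to
\[
    \Phi(B) \; = \; -\sum_{i=2}^{t}\Phi(J_i) \; \leq \; -\delta \cdot \bigl|\{\,i : v(J_i) \geq 3\,\}\bigr|.
\]

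The crucial step is then to show that at least a positive fraction of the overlaps $J_i$ must satisfy $v(J_i) \geq 3$, so that the right-hand side is at most $-\eta v(B)$ for a positive constant $\eta$. The block hypothesis gives that every edge of $B$ lies in at least two copies of $H$, so double counting edge-copy incidences across $\mathcal{F}$ (the set of all $H$-copies in $B$) yields $|\mathcal{F}|\, e(H) \geq 2e(B)$. Combined with the identity $\sum_i e(J_i) = te(H) - e(B)$ for the decomposition, this forces the total edge-overlap between successive copies to be linear in $t$, which for $e(H) \geq 3$ is incompatible with all $J_i$ equalling $K_2$. Quantifying this with the spaciousness of $H$ (\Cref{lem:spacious}) and the $2$-connectivity supplied by \Cref{fact:strictly2balancedprops} to rule out degenerate overlaps with $v(J_i) \leq 1$ gives $\Phi(B) \leq -\eta v(B)$ as desired.

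Finally, feeding the density lemma into a union bound: at $p = cn^{-1/m_2(H)}$, the expected number of labelled copies of a graph $B$ on $v$ vertices and $e$ edges in $G(n,p)$ is at most $n^v p^e = c^e n^{2 - 1/m_2(H) + \Phi(B)/m_2(H)} \leq n^{O(1) - \eta v/m_2(H)}$; summing over the at most $2^{\binom{v}{2}}$ isomorphism types on $v$ vertices and then over $v > L$ yields $o(1)$ provided $c$ is small enough and $L$ is large enough (both depending only on $H$). The main obstacle I anticipate is the density lemma itself, specifically transferring the double count over all of $\mathcal{F}$ (which governs the block condition) into a lower bound on the number of non-trivial $J_i$ in a single greedy decomposition; the handling of overlaps that are disconnected or miss edges needs to be done carefully so that the bookkeeping of $\Phi(J_i)$ remains monotone.
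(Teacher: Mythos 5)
Your overall strategy --- potential function $\Phi$, telescoping across a greedy $H$-decomposition, first-moment union bound --- is the right family of ideas and is close in spirit to what [npss] actually do. The telescoping identity $\Phi(F_i) = \Phi(F_{i-1}) - \Phi(J_i)$ is correct, and the observation that $\Phi(J) > 0$ whenever $J$ is a proper subgraph of $H$ on at least three vertices (from strict $2$-balancedness), $\Phi(J) = 0$ when $J$ is a single edge, and $\Phi(J) < 0$ only when $J$ has at most one vertex, is the right bookkeeping. The observation that the block condition lets you always choose $H_{i+1}$ sharing at least one edge with $F_i$ (so $\Phi(J_i) \geq 0$ always) is also correct.

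The gap is exactly where you suspected it, and I do not see how your double count closes it. The inequality $|\mathcal{F}|\,e(H) \geq 2 e(B)$ quantifies over the set $\mathcal{F}$ of \emph{all} $H$-copies in $B$, whereas the identity $\sum_i e(J_i) = t\,e(H) - e(B)$ concerns only the $t$ copies in one greedy decomposition. Since $|\mathcal{F}|$ can be (and in general is) much larger than $t$, the two facts do not interact: if all $J_i$ were single edges you would get $|\mathcal{F}| \geq 2t - 2(t-1)/e(H) > t$, which only tells you there exist copies of $H$ in $B$ outside the decomposition --- not that the decomposition itself contains any overlap on $\geq 3$ vertices. You would need either a smarter greedy rule (e.g.\ always pick the copy of maximal overlap, and argue that the block irreducibility forces a linear fraction of maximal overlaps to be large) or a genuinely different combinatorial invariant. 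This is the technical heart of [npss, Theorem 12] and it is considerably more involved than the sketch suggests; spaciousness and $2$-connectivity are relevant ingredients, but they alone do not give you a lower bound on $|\{i : v(J_i) \geq 3\}|$.

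Two secondary issues. First, your union bound sums $2^{\binom{v}{2}} \cdot n^v p^{e}$ over \emph{all} $v > L$; this diverges once $v$ grows, since $2^{\binom{v}{2}}$ outpaces the savings from $n^{-\eta v / m_2(H)}$ and $c^{e}$. The standard repair is to truncate: show that any block $B$ on more than $L + v(H)$ vertices contains a greedily-built subgraph $F_{t_0}$ with $v(F_{t_0}) \in [L, L + v(H)]$ and $\Phi(F_{t_0}) \leq -\eta L$, and then union-bound only over this bounded window of sizes (where the number of isomorphism types is an absolute constant depending on $L$ and $H$). This requires the density excess to hold for the intermediate graphs $F_{t_0}$, not just for $B$ itself, which your telescoping does in principle deliver but only once the overlap-counting lemma is established. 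Second, your reliance on spaciousness (\Cref{lem:spacious}) and on $2$-connectivity (\Cref{fact:strictly2balancedprops}) excludes $H = K_3$ and any $H$ with $m_2(H) \leq 1$, but the theorem as stated is for all strictly $2$-balanced $H$; those cases would need separate (easier, but non-vacuous) treatment.
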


We will also need the following corollary of \Cref{thm:blocksarefinite}.

\begin{corollary}[{\cite[Corollary~13]{npss}}]\label{cor:blocksaresparse}
    Let $H$ be a strictly $2$-balanced graph.
    Then there exists a constant $c > 0$ such that for $p \leq cn^{-1/m_2(H)}$, w.h.p.\ $G \sim G(n,p)$ satisfies that for every $H$-block $B \subseteq G$ we have $m(B) \leq m_2(H)$.
    Moreover, if $p = o\left(n^{-1/m_2(H)}\right)$ then a strict inequality holds.
\end{corollary}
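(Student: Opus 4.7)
The plan is to combine the block-size bound of \Cref{thm:blocksarefinite} with a routine first-moment computation. Let $c, L > 0$ be the constants from \Cref{thm:blocksarefinite}, so that for $p \le c n^{-1/m_2(H)}$ every $H$-block $B \subseteq G(n,p)$ has $v(B) \le L$ w.h.p.; we may freely shrink $c$ if needed. If some $H$-block $B$ had $m(B) > m_2(H)$, then by definition of $m(\cdot)$ there would be a subgraph $J \subseteq B$ with $d(J) > m_2(H)$ and $v(J) \le v(B) \le L$. So it is enough to show that w.h.p.\ $G(n,p)$ contains no copy of any graph $J$ on at most $L$ vertices with $d(J) > m_2(H)$.

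For a single such $J$, the expected number of (labelled) copies in $G(n,p)$ is at most
\[
    n^{v(J)} p^{e(J)} \;\le\; c^{e(J)} \, n^{\,v(J) - e(J)/m_2(H)},
\]
and since $d(J) > m_2(H)$ the exponent $v(J) - e(J)/m_2(H)$ is strictly negative, so this expectation is $o(1)$. As there are only finitely many isomorphism types of graphs on at most $L$ vertices, a union bound combined with Markov's inequality shows that w.h.p.\ no such $J$ appears in $G(n,p)$, and intersecting with the event of \Cref{thm:blocksarefinite} proves the first half of the statement.

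For the moreover clause, write $p = \eps(n) \cdot n^{-1/m_2(H)}$ where $\eps(n) \to 0$. To conclude $m(B) < m_2(H)$ we must additionally rule out subgraphs $J$ with $d(J) = m_2(H)$. For any $J$ on at most $L$ vertices with $d(J) \ge m_2(H)$ the expected number of copies is
\[
    n^{v(J)} p^{e(J)} = \eps(n)^{e(J)} \cdot n^{\,v(J) - e(J)/m_2(H)} \le \eps(n)^{e(J)} = o(1),
\]
and the same finite union bound completes the proof. The only non-trivial input is \Cref{thm:blocksarefinite} itself, which is already in hand; once block sizes are uniformly bounded, the densest-subgraph step reduces to a standard first-moment calculation, so there is no real obstacle.
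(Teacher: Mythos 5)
Your proof is correct, and it is the natural derivation. Note that this paper does not give its own proof of \Cref{cor:blocksaresparse}: it is cited verbatim as \cite[Corollary~13]{npss}, whose derivation from \cite[Theorem~12]{npss} (stated here as \Cref{thm:blocksarefinite}) proceeds exactly along the lines you describe — once block sizes are w.h.p.\ uniformly bounded by the constant $L$, the density bound reduces to a finite union bound over isomorphism types of graphs on at most $L$ vertices, with a first-moment calculation for each. Your two small hygiene points (shrinking $c$ so that the hypothesis of \Cref{thm:blocksarefinite} is still met, and observing that any offending subgraph $J$ automatically inherits the bound $v(J) \le L$) are handled correctly, and the strict-inequality refinement under $p = o(n^{-1/m_2(H)})$ follows cleanly from the extra $\eps(n)^{e(J)}$ factor with $e(J) \ge 1$. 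No gaps.
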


Further, we need the following basic property of $H$-closed graphs.
Here a \emph{decomposition} of a graph $G$ is a collection $H_1, \ldots, H_k$ of subgraphs of $G$ such that every edge of $G$ is covered by exactly one of the subgraphs $H_i$.

\begin{lemma}[{\cite[Lemma~14]{npss}}]\label{lem:fclosedpartition}
    Let $H$ be a graph. Then if a graph $G$ is $H$-closed, there exists a decomposition $B_1, \ldots, B_k$ of $G$, such that every subgraph $B_i$ is an $H$-block and every copy of $H$ in $G$ is entirely contained in some block $B_i$.
\end{lemma}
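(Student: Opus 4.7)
The plan is to define a natural equivalence relation on $E(G)$ whose equivalence classes produce the desired blocks. Specifically, declare $e \sim f$ if there is a sequence of edges $e = e_0, e_1, \ldots, e_m = f$ in $G$ such that for every $0 \le i < m$ the edges $e_i$ and $e_{i+1}$ lie in a common copy of $H$ in $G$. Reflexivity holds because every edge of $G$ lies in a copy of $H$ (by $H$-closedness of $G$); symmetry follows by reversing chains, and transitivity by concatenating them. Let $B_1, \ldots, B_k$ be the subgraphs of $G$ whose edge sets are the equivalence classes of $\sim$ and whose vertex sets are the endpoints of those edges; these partition $E(G)$ and so form a decomposition of $G$.

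The first key observation is that every copy $H'$ of $H$ in $G$ is entirely contained in a single block: any two edges of $H'$ are $\sim$-equivalent via the trivial chain of length one lying in $H'$, so $E(H') \subseteq E(B_i)$ for some $i$, and then $H' \subseteq B_i$. This already yields the containment clause of the conclusion. To verify that each $B_i$ is $H$-closed, note that every edge of $B_i$ lies in some copy of $H$ in $G$ (as $G$ is $H$-closed), and this copy is contained in $B_i$ by the key observation; hence every edge, and therefore every vertex via an incident edge, lies in a copy of $H$ inside $B_i$. Moreover, if $H' \subseteq B_i$ is a copy of $H$, then the at least three closed edges of $H'$ guaranteed by $H$-closedness of $G$ each belong to at least two copies of $H$ in $G$, all of which lie in $B_i$ by the key observation, so these edges remain closed within $B_i$, meaning $H'$ is closed in $B_i$.

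For the $H$-block property of $B_i$, fix any nonempty proper subset $E' \subsetneq E(B_i)$ and pick $e \in E'$ and $f \in E(B_i) \setminus E'$. Take a $\sim$-chain $e = e_0, \ldots, e_m = f$; this chain stays inside $B_i$ because each linking copy of $H$ meets $E(B_i)$ and is therefore contained in $B_i$ by the key observation, so each $e_j$ lies in $E(B_i)$. Since $e_0 \in E'$ and $e_m \notin E'$, there must exist an index $j$ with $e_j \in E'$ and $e_{j+1} \notin E'$, and the copy of $H$ containing both $e_j$ and $e_{j+1}$ partially lies in $E'$, as required. The whole proof is in effect a connectivity analysis on the auxiliary hypergraph whose hyperedges are the $H$-copies in $G$; the only point that needs any care, rather than being a genuine obstacle, is the repeated use of the key observation to ensure that the $H$-copies invoked at each step stay inside the block currently under consideration.
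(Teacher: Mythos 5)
Your proof is correct and uses the standard, expected argument: defining the blocks as equivalence classes of edges under the ``linked by a chain of $H$-copies'' relation, then verifying $H$-closedness and the block property componentwise. The paper does not prove this lemma itself (it cites it from~\cite{npss}), but your connectivity argument on the auxiliary hypergraph of $H$-copies is the natural proof and matches the approach in~\cite{npss}.
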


Our contribution to the proof of \Cref{thm:antiramsey0} is the following claim. 
It essentially says that removing two edges from a strictly $2$-balanced graph $H$ on at least five edges does not yield a graph that is `too far' from being spacious.

\begin{proposition}\label{prop:superspacious}
    Let $H$ be a strictly $2$-balanced graph on at least five edges with $m_2(H) > 1$. Then if we remove any two edges from $E(H)$, the resulting graph contains two edges which are vertex disjoint.
\end{proposition}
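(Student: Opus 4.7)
The plan is to argue by contradiction. Suppose there exist edges $e_1, e_2 \in E(H)$ such that $H' := H - \{e_1, e_2\}$ contains no two vertex-disjoint edges. By the standard classification of graphs in which every pair of edges shares a vertex, $H'$ must be either a star $K_{1,s}$ or a triangle $K_3$, together with possibly some isolated vertices. Since $e(H) \ge 5$, we have $e(H') \ge 3$; in particular, $s \ge 3$ in the star case.

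I would then exploit \Cref{fact:strictly2balancedprops}: every vertex of $H$ has degree at least $2$. Hence any isolated vertex of $H'$ must be incident to both $e_1$ and $e_2$, and since two distinct edges of a simple graph share at most one vertex, $H'$ has at most one isolated vertex. Moreover, in the star case every leaf has degree $1$ in $H'$, so it must be an endpoint of $e_1$ or $e_2$; this forces $s \le |V(e_1) \cup V(e_2)| \le 4$. The surviving configurations for $H'$ are thus $K_{1,3}$, $K_{1,4}$, or $K_3$, each with at most one extra isolated vertex.

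In each configuration, splitting on whether $e_1, e_2$ are vertex-disjoint or share a vertex, a brief inspection pins down $H$ completely. One verifies that the only placements of $e_1, e_2$ compatible with $H$ being simple and of minimum degree at least $2$ yield a graph $H$ on $4$ or $5$ vertices with $5$ or $6$ edges, and in each such $H$ a triangle appears as a subgraph --- either on the star centre with two leaves joined by some $e_i$, or as the original triangle of $H'$. A direct computation gives $d_2(H) \in \{5/3, 2\}$, while $d_2(K_3) = 2$; hence either $m_2(H) > d_2(H)$, or $K_3$ is a proper subgraph of $H$ with $d_2(K_3) = d_2(H)$. Both conclusions contradict $H$ being strictly $2$-balanced.

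The only mildly delicate point will be keeping the case split complete, tracking the two variables $|V(e_1) \cup V(e_2)| \in \{3,4\}$ and the presence or absence of the isolated vertex; there is no deep obstacle. Conceptually, the proof rests on the observation that destroying every matching of size $2$ by removing only two edges forces $H$ to be so small that it contains a triangle as a proper subgraph with $2$-density at least $d_2(H)$, which is incompatible with strict $2$-balance.
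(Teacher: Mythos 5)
Your proposal is correct, and it takes a genuinely different route from the paper's. The paper's argument is constructive: writing $e,f$ for the removed edges, it splits into cases according to whether $e$ and $f$ are vertex-disjoint or form a path, and in each case invokes $\delta(H) \geq 2$, $2$-connectivity, and spaciousness (Lemma~\ref{lem:spacious}) to directly exhibit a vertex-disjoint pair of surviving edges. Your argument instead proceeds by contradiction via the classical classification of graphs with pairwise-intersecting edges (triangle or star, plus isolated vertices), uses $\delta(H) \geq 2$ to cap the size of the resulting configuration, and shows that $H$ must then contain a triangle as a proper subgraph with $d_2(K_3) \geq d_2(H)$, contradicting strict $2$-balance --- essentially the same density mechanism the paper uses to establish spaciousness, applied directly to the stronger statement. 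Notably your approach dispenses with both $2$-connectivity and the spaciousness lemma; the trade-off is that you need the triangle-or-star classification and a careful final enumeration. On that enumeration: your claim of ``at most one extra isolated vertex'' is slightly loose --- in the star case an isolated vertex is in fact impossible (it would force $e_1$ and $e_2$ to share that vertex, leaving only two vertices to serve as the $s \geq 3$ leaves), and in the triangle case exactly one isolated vertex is forced (with none, $e_1,e_2$ would have to be multi-edges). Once pinned down, the surviving graphs $H$ are $K_4$ minus an edge ($d_2 = 2$, $K_3$ a proper subgraph with equal $2$-density) or the $5$-vertex, $6$-edge wheel-like graph ($d_2 = 5/3 < 2 = d_2(K_3)$), both contradicting strict $2$-balance as you claim.
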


\begin{proof}
    Recall from \Cref{fact:strictly2balancedprops} and \Cref{lem:spacious} that $H$ has minimum degree at least $2$, $H$ is $2$-connected and $H$ is spacious. Denote the removed edges by $e$ and $f$. We have the following cases.

    \emph{Case 1}: \emph{The edges $e$ and $f$ are vertex disjoint}. In this case, if an edge $g$ is incident to both $e$ and $f$ in $H$ then we are immediately done since $H$ is spacious; 
    there is some edge in $H$ which is vertex disjoint from $g$, and is thus not $e$ or $f$. Assume there is no such edge $g$. 
    Then since $\delta(H) \geq 2$, there exists a collection $E$ of four distinct edges, each containing exactly one vertex of $V(e) \cup V(f)$. 
    Now, there are either two edges in $E$ which are vertex disjoint, and we are done, or all the edges in $E$ intersect in the same vertex $v$. 
	Since $H$ is $2$-connected there exists some edge $h$ which is not incident to $v$ and not in $E \cup \{e, f\}$ (otherwise $e$ and $f$ lie in different components of $H \setminus \{v\}$), and $h$ must be vertex disjoint from some edge from $E$.

    \emph{Case 2}: \emph{The edges $e$ and $f$ form a path}. 
    In this case, again, if an edge intersects both $e$ and $f$ then we are done by $H$ being spacious. 
    Assume not. 
    Then, since $\delta(H) \geq 2$, there are edges $e'$ and $f'$ such that $e'$ is incident to the vertex in $e \setminus f$ and $f'$ is incident to the vertex in $f \setminus e$.
	
    If $e'$ and $f'$ are vertex disjoint, we are done. Assume not.
    Then $e'$ and $f'$ meet at some vertex $v$ forming a copy of $C_4$ with $e$ and $f$. 
    Since $H$ has at least five edges, there exists at least one more edge $h$, which by assumption is not adjacent to $e\cap f$ and does not form a triangle with $e$ and $f$.
    If $h$ is vertex disjoint from either $e'$ or $f'$ we are done. Hence $h$ contains $v$.
	Observe that, since $H$ is $2$-connected, there also exists an edge which is neither incident to $v$ nor is one of $e$ or $f$ (otherwise $h \setminus \{v\}$ and $e$ are in different components of $H \setminus \{v\}$). 
    This edge is vertex disjoint from at least one of $e'$ and $f'$.
\end{proof}

We now adapt the proof of \cite[Theorem~7]{npss} to prove \Cref{thm:antiramsey0}. 

\begin{proofofthmantiramsey}
    Let $H$ be a strictly $2$-balanced graph on at least five edges and $c$ be a constant given by \Cref{cor:blocksaresparse} when applied to $H$. 
    Let $p \leq cn^{-1/m_2(H)}$ and $G \sim G(n,p)$. 
    Then w.h.p.\ every $H$-block $B$ in $G$ satisfies $m(B) \le m_2(H)$; let us assume that this holds for $G$. 
    We use Algorithm~\textsc{Rainbow-Colour} (see \Cref{fig:rainbowcolour}) 
    to find a proper colouring of $G$ without a rainbow copy of $H$. 

\begin{figure}[!ht]
\begin{algorithmic}[1]
\Procedure{Rainbow-colour}{$G = (V,E)$}
    \State $\hat{G}\gets G$
    \State col $\gets 0$
    \While{$\exists e_1, e_2 \in E(\hat{G}): e_1 \equiv_H e_2$ in $\hat{G}$ and $e_1 \cap e_2 = \emptyset$}\label{line:e1e2}
        \State colour $e_1, e_2$ with col\label{line:coloure1e2withcol}
        \State $\hat{G} \gets \hat{G}\setminus \{e_1, e_2\}$ and $\mbox{col} \ \gets \ \mbox{col} \ + 1$ \label{line:removee1e2iteratecol}
    \EndWhile\label{line:while1end}
    \While{$\exists e \in E(\hat{G})$: $e$ does not belong to a copy of $H$}\label{line:enotinFcopy}
        \State colour $e$ with col\label{line:colourewithcol}
        \State $\hat{G} \gets \hat{G}\setminus \{e\}$ and $\mbox{col} \ \gets \ \mbox{col} \ + 1$ \label{line:removeeiteratecol}
    \EndWhile\label{line:while2end}
\EndProcedure
\State Remove isolated vertices in $\hat{G}$.\label{line:removeisolated}
\State $\{B_1, \ldots, B_k\} \gets H$-blocks obtained by applying \Cref{lem:fclosedpartition} to $\hat{G}$.\label{line:applyblocklem}
\State Colour (properly) each $B_j$ without a rainbow copy of $H$ using distinct sets of colours (cf.\ text why these last two lines are possible).\label{line:finishcolouring}
\end{algorithmic}
\caption{The implementation of algorithm \textsc{Rainbow-colour}.}\label{fig:rainbowcolour}
\end{figure}

To see the correctness of the algorithm, we first deduce that it suffices to argue that the graph $\hat{G}$ obtained in \cref{line:removeisolated} can be properly coloured without creating a rainbow copy of $H$.
Indeed, to obtain $\hat{G}$ we only remove edges that are either in pairs of non-adjacent edges that are both contained in exactly the same $H$-copies (and can thus not be contained in a rainbow copy if we give them the same colour), or not contained in a copy of $H$ (and can thus be coloured with a new colour without risk of creating a rainbow copy of $H$).

It thus remains to prove that lines~\ref{line:applyblocklem} and~\ref{line:finishcolouring} are indeed possible. For the former, we must show that the graph $\hat{G}$ is $H$-closed.
Assume otherwise. Then there exists a copy $H'$ of $H$ which has at most two closed edges (as there are no vertices and edges which are not a part of a copy of $H$). 
But by applying \Cref{prop:superspacious} (and ensuring the closed edges are chosen to be removed), $H'$ must also have two edges $e_1, e_2$ which are vertex disjoint and are not closed. So $H'$ is the only copy of $H$ that $e_1$ and $e_2$ belong to and hence $e_1 \equiv_H e_2$.
This is a contradiction, as this pair of edges would have been removed in \cref{line:removee1e2iteratecol} of Algorithm~\ref{fig:rainbowcolour} (noting that the procedure in Lines~\ref{line:enotinFcopy} to \ref{line:while2end} does not change the collection of $H$-copies in $\hat{G}$). 
Hence $\hat{G}$ is indeed $H$-closed and thus, by assumption, every $H$-block $B$ in $G$ satisfies $m(B) \le m_2(H)$.

Now for line~\ref{line:finishcolouring}, by \Cref{lem:fclosedpartition} and our initial assumptions, colouring one block $B_i$ does not influence the colouring of any copy of $H$ which does not lie in $B_i$ and all  blocks $B_i$ are edge-disjoint.
It follows that for line~\ref{line:finishcolouring} to succeed (and to prove the $0$-statement of \Cref{ques:antiramsey}) it suffices to prove that all graphs $G$ with $m(G) \leq m_2(H)$ satisfy $G \notantiramsey H$.\qed\end{proofofthmantiramsey}

\subsection{Anti-Ramsey threshold in a special case}
\label{sec:anti-Ram_special}

We now wish to use Theorem~\ref{thm:antiramsey0} to find the threshold for the anti-Ramsey property for a class of graphs including sufficiently sparse regular graphs.

\begin{lemma}\label{lem:antiramsey_specialcase_colouring}
    Let $H$ be a strictly 2-balanced graph where $1 < m_2(H) < \frac{\delta(H)(\delta(H) + 1)}{2\delta(H) + 1}$. 
    Every graph $B$ such that $m(B) \le m_2(H)$ satisfies $B \notantiramsey H$.
\end{lemma}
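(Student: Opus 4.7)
The heart of the matter is that the density hypothesis forces $B$ to be $d$-degenerate, where $d \coloneqq \delta(H)$ (note $d \ge 2$, since $H$ is strictly $2$-balanced with $m_2(H) > 1$, by \Cref{fact:strictly2balancedprops}). Indeed,
\[
2m(B) \;\le\; 2m_2(H) \;<\; \frac{2d(d+1)}{2d+1} \;=\; d + \frac{d}{2d+1} \;<\; d+1,
\]
so every subgraph of $B$ has average degree strictly less than $d+1$ and hence minimum degree at most $d$. Fix a degeneracy ordering $v_1, v_2, \ldots, v_n$ of $V(B)$ in which each $v_i$ has at most $d$ earlier neighbours. A structural observation I will use repeatedly: for every copy $H'$ of $H$ in $B$, the latest vertex $u$ of $V(H')$ in this ordering has exactly $d$ back-neighbours in $B$, and these are precisely its $d$ neighbours in $H'$. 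Indeed $u$ has degree at least $\delta(H) = d$ in $H'$, all its $H'$-neighbours are earlier in the ordering, and its back-degree in $B$ is at most $d$.

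The plan is to prove the statement by induction on $v(B)$. In the base case, when $B$ contains no copy of $H$, any proper edge-colouring works. For the inductive step, select a minimum-degree vertex $v$ of $B$, so $\deg_B(v) \le d$, let $B' \coloneqq B - v$, and observe that $m(B') \le m(B) \le m_2(H)$, so by induction there exists a proper edge-colouring $\chi'$ of $B'$ with no rainbow copy of $H$. It remains to extend $\chi'$ to $B$ by colouring the edges at $v$.

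If $\deg_B(v) < d$, then $v$ lies in no copy of $H$ (since $\delta(H) = d$), so the edges at $v$ may be coloured with fresh new colours. The delicate case is $\deg_B(v) = d$, with neighbours $w_1, \ldots, w_d$, for then every copy of $H$ through $v$ must use all $d$ edges $vw_1, \ldots, vw_d$. My plan here is to pick, for each $i \in [d]$, an edge $f_i \in E(B')$ whose $\chi'$-colour is not used at $w_i$, making the values $\chi'(f_1), \ldots, \chi'(f_d)$ pairwise distinct, and then to set $\chi(vw_i) \coloneqq \chi'(f_i)$: this yields a proper extension. For any remaining rainbow candidate $H' \ni v$ whose edges off $v$ happen to be rainbow under $\chi'$, the coincidence $\chi(vw_i) = \chi'(f_i)$ for some $i$ with $f_i \in E(H' - v)$ kills rainbowness.

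The main obstacle is showing that the $f_1, \ldots, f_d$ can always be selected so that for every copy $H' \ni v$, some $f_i$ lies in $E(H' - v)$. This is a hitting-set problem for the collection $\{E(H' - v) : H' \ni v\}$, constrained by the proper-colouring requirements at each $w_i$. I expect to resolve it via a greedy or Hall-type argument leveraging three facts: the abundance of candidate edges, since $|E(H' - v)| = e(H) - d \ge d(v(H) - 2)/2$ is linear in $v(H)$; the strict density bound limiting the total number of $H$-copies incident to $v$ (so the hitting-set hypergraph is sparse); and the spaciousness consequences of \Cref{lem:spacious} and \Cref{prop:superspacious}, which guarantee that each $H'$ offers vertex-disjoint edge pairs avoiding any prescribed $w_i$. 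This last point is crucial because the proper-colouring constraint at $w_i$ forbids $\chi'(f_i)$ from appearing on any $B'$-edge incident to $w_i$.
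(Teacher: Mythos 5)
Your approach --- remove a single minimum-degree vertex $v$ of degree at most $d \coloneqq \delta(H)$ and extend a colouring of $B-v$ --- is genuinely different from the paper's, which instead locates two \emph{adjacent} vertices $u, v$ of degree exactly $d$. The precise bound $m_2(H) < \tfrac{d(d+1)}{2d+1}$ is what buys that pair: if all minimum-degree vertices were pairwise non-adjacent, a double-counting argument forces $e(B)/v(B) \ge \tfrac{d(d+1)}{2d+1}$, contradicting $m(B) \le m_2(H)$. You only extract $d$-degeneracy from this hypothesis, which is strictly weaker and is not enough. The payoff of the two-vertex removal is rigidity: since $uv \in E(B)$ and $\deg_B(u)=\deg_B(v)=d$, any copy of $H$ meeting $\{u,v\}$ must contain both and must use \emph{all} $d$ edges at $u$ and all $d$ edges at $v$; giving a single non-adjacent pair $ux$, $vy$ a common new colour then destroys rainbowness in \emph{every} such copy simultaneously. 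Your extension step lacks this rigidity and, as written, has a genuine gap: you need a set $\{f_1,\dots,f_d\}$ of edges of $B'=B-v$ that meets $E(H'-v)$ for every copy $H'$ of $H$ through $v$, but the sets $E(H'-v)$ can be pairwise disjoint in arbitrarily large number, so no bounded hitting set need exist, and the ``abundance/Hall-type'' argument you anticipate does not materialise.

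Here is a concrete obstruction. Take $H = C_8$, which satisfies the hypotheses ($m_2(C_8) = 7/6 < 6/5 = \tfrac{d(d+1)}{2d+1}$ with $d = 2$), and let $B$ consist of a vertex $v$ adjacent to $w_1, w_2$, together with nine internally disjoint $w_1$--$w_2$ paths $P_1,\dots,P_9$ of length $6$. One checks $v(B)=48$, $e(B)=56$ and $m(B) = 7/6 = m_2(H)$, and the only copies of $C_8$ in $B$ are $v w_1 P_i w_2 v$ for $i\in[9]$. The sets $E(P_i)$ are pairwise disjoint, so two edges $f_1,f_2$ cannot hit all nine. Worse, $B'=B-v$ contains no $C_8$ at all, so your inductive hypothesis hands back an \emph{arbitrary} proper colouring $\chi'$ of $B'$, which may well colour every $P_i$ rainbow with pairwise disjoint colour palettes; then no choice of the two colours on $vw_1$ and $vw_2$ can create a repeat inside more than two of the nine candidate cycles, and the extension fails. (The conclusion of the lemma does hold for this $B$ --- colour each $P_i$ with its own alternating pair of colours --- but your induction, as stated, has no mechanism to produce it.) Repairing the argument seems to require either strengthening the inductive hypothesis to control $\chi'$, or using the full strength of the density bound to remove an adjacent low-degree pair, as the paper does.
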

\begin{proof}
    
    Suppose for a contradiction that there exists some graph $B$ with $m(B) \le m_2(H)$ and $B \antiramsey H$. Let $B$ be such a graph with the minimum number of vertices. 
    By minimality, every vertex $v \in B$ must be contained in a copy of $H$: 
    if not, we could take a proper colouring of $B\setminus \{v\}$ that does not contain a rainbow copy of $H$ and extend it to a proper colouring of $B$ with no rainbow copy of $H$ by colouring all the edges from $v$ with distinct new colours. 
    In particular, $\delta(B)\ge \delta(H)$.

    \begin{claim}
      $B$ contains two adjacent vertices of degree $\delta(H)$. 
    \end{claim}
    \begin{proof}[Proof of claim]
        Suppose not, for a contradiction. Let $X$ be the set of vertices of degree $\delta(H)$; by assumption, $X$ is a (possibly empty) independent set. We have 
        \[
        2e(B) \ge |X|\delta(H) + (v(B) - |X|)(\delta(H)+1) = v(B)(\delta(H)+1) - |X|
        \]
        and so $|X| \ge v(B)(\delta(H)+1) - 2e(B)$.
        Moreover, as $X$ is independent, 
        \[
        e(B) \ge |X|\delta(H) \ge 
       \big( v(B)(\delta(H)+1) - 2e(B)\big)\delta(H).
        \]
        Rearranging, we see that 
        \[
        m_2(H) \ge m(B) \geq \frac{e(B)}{v(B)} \ge \frac{\delta(H)(\delta(H) + 1)}{2\delta(H) + 1},
        \]
which contradicts that $m_2(H) < \frac{\delta(H)(\delta(H) + 1)}{2\delta(H) + 1}$.
    \end{proof}

    Therefore, we can take $u,v$ to be two adjacent vertices  of degree $\delta(H)$. 
    As $B$ is minimal, there is a proper colouring $\varphi$ of $B\setminus\{u,v\}$ that does not contain a rainbow copy of $H$. As $B \antiramsey H$, any extension of $\varphi$ to a proper colouring $\varphi'$ of $B$ contains a rainbow copy of $H$. 

    The only copies of $H$ that could be rainbow in such a $\varphi'$ must contain $\{u,v\} \cup N(u) \cup N(v)$  (as $d(u) = d(v) = \delta(H)$). So if there exist $x \not= y$ such that $ux, vy \in E(B)$, then using colours distinct from $\varphi$, colouring $ux$ and $vy$ the same and giving every other edge a distinct colour will yield $\varphi'$ with no rainbow copy of $H$. Therefore it must be the case that $N(u)\setminus v = N(v)\setminus u = \{w\}$, for some vertex $w$. However, by Fact~\ref{fact:strictly2balancedprops} and the conditions on $H$, we have that $H$ is 2-connected, and thus we must have that $H = K_3$. However, $K_3$ does not satisfy the condition that $m_2(H) < \frac{\delta(H)(\delta(H) + 1)}{2\delta(H) + 1}$, giving our desired contradiction.
\end{proof}

Theorem~\ref{thm:antiramsey0} and Lemma~\ref{lem:antiramsey_specialcase_colouring} immediately imply the following.

\begin{theorem}\label{thm:antiramsey_specialcase}
     Let $H$ be a strictly 2-balanced graph where $1 < m_2(H) < \frac{\delta(H)(\delta(H) + 1)}{2\delta(H) + 1}$. 
     There exists a constant $c$ such that if $p \le cn^{-1/m_2(H)}$ then
     \[
     \lim_{n\rightarrow \infty} \mathbb{P}[G(n,p)\antiramsey H] = 0.
     \]
\end{theorem}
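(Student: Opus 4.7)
The plan is to combine the reduction provided by \Cref{thm:antiramsey0} with the colouring statement of \Cref{lem:antiramsey_specialcase_colouring}. Concretely, \Cref{thm:antiramsey0} tells us that to prove the $0$-statement of \Cref{ques:antiramsey} for $H$, it suffices to check that every graph $G$ with $m(G) \le m_2(H)$ satisfies $G \notantiramsey H$; and this is exactly the content of \Cref{lem:antiramsey_specialcase_colouring} under the hypotheses imposed here.

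The only thing that actually needs any verification before the chain of implications goes through is that the hypothesis of \Cref{thm:antiramsey0} is met under the hypotheses of the present theorem: we need $H$ to be strictly $2$-balanced, to satisfy $m_2(H) > 1$, and to have at least five edges. The first two are assumed outright, so I would only need to argue $e(H) \ge 5$. From $m_2(H) > 1$ (together with \Cref{fact:strictly2balancedprops}) we get $\delta(H) \ge 2$, hence $e(H) \ge v(H)$. A short case check then rules out $v(H) \le 4$: the three candidates $K_3$, $C_4$, $K_4$ have $m_2$ equal to $2, 3/2, 5/2$ respectively, each of which exceeds the corresponding bound $\delta(\delta+1)/(2\delta+1) \in \{6/5, 6/5, 12/7\}$, contradicting the upper-bound hypothesis on $m_2(H)$. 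Hence $v(H) \ge 5$ and $e(H) \ge v(H) \ge 5$.

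With the hypothesis of \Cref{thm:antiramsey0} verified, I would take $c$ to be the constant produced by that theorem (equivalently, the constant from \Cref{cor:blocksaresparse}). For $p \le c n^{-1/m_2(H)}$ the reduction yields that w.h.p.\ $G(n,p) \notantiramsey H$, provided one can prove $G \notantiramsey H$ for every graph $G$ with $m(G) \le m_2(H)$. \Cref{lem:antiramsey_specialcase_colouring} supplies precisely this, completing the proof.

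There is no real obstacle: the substance of the result is entirely carried by the reduction theorem and the colouring lemma already in place, and the present statement is a bookkeeping step that checks the hypotheses line up and then chains the two results.
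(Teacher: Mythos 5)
Your proposal is correct and matches the paper's approach: the paper also proves this theorem by simply combining \Cref{thm:antiramsey0} with \Cref{lem:antiramsey_specialcase_colouring}. The paper states the implication without explicitly checking the hypothesis $e(H) \ge 5$ of \Cref{thm:antiramsey0}; your verification of that point (using $\delta(H) \ge 2$ from \Cref{fact:strictly2balancedprops}, 2-connectedness to reduce to $K_3, C_4, K_4$, and then checking $m_2$ against the upper-bound hypothesis) is sound and fills in a small gap the paper leaves implicit.
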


As a corollary, we get that every relatively sparse, strictly $2$-balanced regular graph satisfies the $0$-statement in \Cref{ques:constrainedramsey}.

\begin{corollary} \label{cor:antiramsey-regular}
    Let $H$ be a $d$-regular strictly 2-balanced graph with $v(H) \ge 4d$. 
    There exists a constant $c$ such that if $p \le cn^{-1/m_2(H)}$ then
     \[
     \lim_{n\rightarrow \infty} \mathbb{P}[G(n,p)\antiramsey H] = 0.
     \]
\end{corollary}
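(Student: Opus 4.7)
The plan is to apply \Cref{thm:antiramsey_specialcase} directly. Since $H$ is strictly $2$-balanced by hypothesis and $\delta(H) = d$ by $d$-regularity, it suffices to verify the two inequalities
\[
1 < m_2(H) < \frac{d(d+1)}{2d+1}.
\]
The conclusion of \Cref{cor:antiramsey-regular} then follows immediately.

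First I would observe that the hypothesis $v(H) \geq 4d$ forces $d \geq 2$, since the only strictly $2$-balanced $1$-regular graph is $K_2$, which has only two vertices. With $d \geq 2$ in hand, strict $2$-balancedness combined with $e(H) = dv(H)/2$ gives
\[
m_2(H) = d_2(H) = \frac{e(H)-1}{v(H)-2} = \frac{dv(H)/2 - 1}{v(H) - 2}.
\]

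To establish the lower bound $m_2(H) > 1$, I would rearrange it to $v(H)(d/2 - 1) > -1$, which is immediate since the left-hand side is non-negative whenever $d \geq 2$. For the upper bound, I would observe that the function $f(n) = (dn/2 - 1)/(n-2)$ has derivative $(1-d)/(n-2)^2 \leq 0$ for $d \geq 2$, so $f$ is non-increasing in $n$, and hence
\[
m_2(H) = f(v(H)) \leq f(4d) = \frac{2d^2 - 1}{4d - 2}.
\]
A direct cross-multiplication then yields
\[
(2d^2 - 1)(2d+1) = 4d^3 + 2d^2 - 2d - 1 < 4d^3 + 2d^2 - 2d = d(d+1)(4d - 2),
\]
so $f(4d) < d(d+1)/(2d+1)$, as required.

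No substantial obstacle arises: the corollary reduces to a short verification of the hypotheses of \Cref{thm:antiramsey_specialcase}. The one noteworthy point is that the bound $v(H) \geq 4d$ is essentially tight, since at $v(H) = 4d$ the cross-products above differ by exactly $1$; any weakening of the vertex count hypothesis would break the strict inequality in the upper bound.
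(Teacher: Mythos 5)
Your proof is correct and takes essentially the same approach as the paper: compute $m_2(H) = d_2(H)$ explicitly using $e(H) = dv(H)/2$, then verify the two inequalities needed to invoke \Cref{thm:antiramsey_specialcase}. The only difference is cosmetic — the paper rewrites $m_2(H)$ as $\frac{d}{2} + \frac{d-1}{v(H)-2}$ and derives the same cutoff $v(H) > 4d - 2/d$, whereas you evaluate the expression at $v(H) = 4d$ directly via monotonicity; your explicit remark that $v(H)\ge 4d$ forces $d\ge 2$ (and hence $m_2(H)>1$) is a point the paper leaves implicit.
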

\begin{proof}
    Since $H$ is $d$-regular, $e(H) = dv(H)/2$. Then 
    \[
    m_2(H) = \frac{e(H) - 1}{v(H) - 2}  = \frac{d}{2} + \frac{d-1}{v(H)-2}.
    \]
    Rearranging, we find that $m_2(H) <  \frac{d}{2} + \frac{d}{4d+2} = \frac{d(d+1)}{2d+1}$ if 
    $v(H) > 4d - \frac{2}{d}$. Thus we can apply Theorem~\ref{thm:antiramsey_specialcase} to deduce the result.    
\end{proof}

As a concrete example of a family of graphs $H$ satisfying the conditions of \Cref{cor:antiramsey-regular}, consider for $k \in \mathbb{N}$ the family of $k$-blow-ups of cycles of length at least $8$.

\section{Constrained Ramsey}
\label{sec:constrained}

In this section we will prove the 0-statements of Theorems~\ref{thm:constrained} and \ref{thm:constrained_K3}, regarding the constrained Ramsey problem for $(K_{1,k},H)$, where $k \ge 3$. 
(Recall that the 1-statement of both theorems follows from Theorem~\ref{thm:1-statement}.)
The proof of the 0-statements splits into two parts, according to whether $H$ is a triangle or not. 
\footnote{Technically, the two parts depend on whether there is a strictly $2$-balanced graph $J \ne K_3$ with $m_2(J)=m_2(H)=2$ contained in $H$.}

\subsection{Threshold when $H$ is not a triangle}
	In this subsection we prove the following lemma, which implies the $0$-statement of Theorem~\ref{thm:constrained} in the case where $H$ is not a triangle. 
    Recall that\footnote{See just after the statement of Theorem~\ref{thm:antiramsey0} on page \pageref{thm:antiramsey0}.} we may assume $H$ is strictly $2$-balanced. 

	\begin{lemma}\label{lem:notk3}
		Let $H$ be a strictly $2$-balanced graph such that $m_2(H) > 1$ and $H \neq K_3$. There exists a constant $c>0$ such that if $p \leq cn^{-1/m_2(H)}$ then
		\[\lim\limits_{n \to \infty} \mathbb{P}[G(n,p) {\notconstrained} (K_{1,3}, H)] = 0.\]
	\end{lemma}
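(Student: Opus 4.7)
The plan is to adapt the proof of \Cref{thm:antiramsey0}. Fix $c>0$ as provided by \Cref{cor:blocksaresparse}, so that for $p \le cn^{-1/m_2(H)}$ the random graph $G \sim G(n,p)$ satisfies, with high probability, that every $H$-block $B \subseteq G$ has $m(B) \le m_2(H)$. Conditioning on this event, my goal is to exhibit an edge-colouring of $G$ in which every colour class has maximum degree at most $2$ (so that no vertex is the centre of a monochromatic $K_{1,3}$) and no copy of $H$ is rainbow; this witnesses $G \notconstrained (K_{1,3}, H)$.

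The key observation is that avoiding a monochromatic $K_{1,3}$ is a weaker constraint than being proper, which allows the pairing step of Algorithm \textsc{Rainbow-Colour} (\Cref{fig:rainbowcolour}) to be relaxed. In line~\ref{line:e1e2} I drop the disjointness condition $e_1 \cap e_2 = \emptyset$ and simply pair up any two $H$-equivalent edges under a fresh colour. Since each new colour is then used on exactly two edges, no vertex has three incident edges of that colour, and no monochromatic $K_{1,3}$ is created.

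With this relaxation, the argument that the resulting graph $\hat G$ is $H$-closed becomes considerably simpler than in the anti-Ramsey case; in particular no analogue of \Cref{prop:superspacious} is needed. If some $H$-copy $H' \subseteq \hat G$ had at most two closed edges, then $H'$ would contain at least $e(H)-2$ non-closed edges. Using \Cref{fact:strictly2balancedprops} together with the hypotheses $H \ne K_3$ and $m_2(H) > 1$, a short check shows $e(H) \ge 4$, so $H'$ has at least two non-closed edges. Any two non-closed edges of $H'$ lie only in $H'$ and are therefore $H$-equivalent, contradicting the exit condition of the modified pairing step. Hence $\hat G$ is $H$-closed, and \Cref{lem:fclosedpartition} decomposes it into $H$-blocks $B_1, \ldots, B_k$ with $m(B_i) \le m_2(H)$.

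The main remaining step---and the principal technical obstacle---is to colour each $H$-block $B_i$ on a disjoint palette so that every colour class has maximum degree at most $2$ and no rainbow copy of $H$ appears; equivalently, to show that $B \notconstrained (K_{1,3}, H)$ for every graph $B$ with $m(B) \le m_2(H)$. My plan is to proceed by induction on $v(B)$, removing at each step a minimum-degree vertex $v$ (whose degree is at most $2m(B) \le 2m_2(H)$), applying the induction hypothesis to $B-v$, and extending the colouring to the edges at $v$ using fresh colours grouped so that each is used on at most two of them. The hard part will be to ensure this extension can always be carried out so that every $H$-copy through $v$ contains two same-coloured edges: a naive pairing of the edges at $v$ can fail when many $H$-copies through $v$ share most of their edges at $v$. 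In that case I expect one must either reuse colours from the colouring of $B-v$ in a carefully controlled way, or exploit the additional structure provided by the bounded block size from \Cref{thm:blocksarefinite} together with the minimum-degree bound $\delta(H) \ge 2$ from \Cref{fact:strictly2balancedprops}.
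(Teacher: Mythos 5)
Your setup is essentially the paper's: you modify Algorithm~\textsc{Rainbow-Colour} by dropping the disjointness requirement in line~\ref{line:e1e2} (which is permissible precisely because a colour used on two adjacent edges still creates no monochromatic $K_{1,3}$), and you correctly observe that $\hat{G}$ is then $H$-closed without needing an analogue of \Cref{prop:superspacious} --- your check that $e(H)\ge 4$ and the $H$-equivalence argument is sound, and this matches the paper's Algorithm~\textsc{Rainbow-Colour-Constrained}. However, you have correctly isolated the crux of the matter --- that one must show $B \notconstrained (K_{1,3}, H)$ for every graph $B$ with $m(B) \le m_2(H)$ --- and then left it unproved. What you offer for this step is an inductive \emph{plan} (strip a minimum-degree vertex, extend the colouring), and you yourself flag that the key difficulty is unresolved: when many copies of $H$ through $v$ use different pairs of edges at $v$, no single pairing of the edges at $v$ kills all of them, and you say only that you ``expect'' some fix involving colour reuse or bounded block size. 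That is not a proof, and the difficulty is genuine.

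The paper handles this step not by a fresh induction but by invoking an existing result of Nenadov, Person, \v{S}kori\'c and Steger \cite[Lemma~26]{npss} (restated here as \Cref{lem:notc4}), which gives exactly the colouring statement you need for all strictly $2$-balanced $H$ on at least four vertices with $m_2(H)>1$ \emph{except} $C_4$. The case $H=C_4$ must then be treated separately --- the paper proves \Cref{lem:c4} for it via Petersen's theorem, exploiting the structure of minimal counterexamples being cubic bridgeless graphs. Your sketch does not notice that $C_4$ is a distinguished case; indeed, \cite[Lemma~26]{npss} fails for $C_4$ in the stronger $2$-bounded setting (see the footnote after \Cref{lem:notc4}), so any general argument must either work directly with the $(K_{1,3},C_4)$ formulation or split off $C_4$. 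In short: your reduction to the block-colouring statement is right and mirrors the paper, but the block-colouring statement itself --- the substantive content of the lemma --- is missing, and your proposed inductive route does not close it.
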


	Note that the same conclusion with $K_{1,3}$ replaced with $K_{1,k}$ for any fixed integer $k \ge 3$ follows trivially from \Cref{lem:notk3}. We will make use of the following result from \cite{npss}. 
    Note that the original result was proven in more general terms regarding so-called $2$-bounded colourings (these are colourings where each colour is used on at most two edges); here we will only need the weaker constrained Ramsey property for $(K_{1,3}, H)$.

	\begin{lemma}[{\cite[Lemma~26]{npss}}]\label{lem:notc4}
		Let $H$ be a strictly $2$-balanced graph on at least four vertices with $m_2(H) > 1$ such that $H \neq C_4$. Then for any graph $B$ such that $m(B) \leq m_2(H)$ it holds that $B \notconstrained (K_{1,3}, H)$. 
	\end{lemma}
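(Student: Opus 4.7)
The plan is to adapt the algorithmic argument of Theorem~\ref{thm:antiramsey0} to the constrained Ramsey setting, reducing the probabilistic statement to the deterministic colouring assertion of Lemma~\ref{lem:notc4} applied blockwise. First, take the constant $c>0$ given by Corollary~\ref{cor:blocksaresparse}, so that for $p\le cn^{-1/m_2(H)}$, w.h.p.\ every $H$-block $B\subseteq G\sim G(n,p)$ satisfies $m(B)\le m_2(H)$; condition on this event and fix such a $G$.

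Next, I would run a variant of \textsc{Rainbow-Colour}: while there exist two $H$-equivalent edges $e_1,e_2$ in the current $\hat{G}$ (crucially, we do \emph{not} require them to be vertex-disjoint, since a single colour class of size two cannot create a monochromatic $K_{1,3}$), assign them a common fresh colour and delete both from $\hat{G}$; then, while there exists an edge $e$ lying in no copy of $H$ within $\hat{G}$, assign $e$ its own fresh colour and delete it. Each fresh colour is used at most twice in total, so these deletions create no monochromatic $K_{1,3}$, and deleted edges cannot sit in a rainbow $H$-copy of $G$ (paired edges share a colour inside every surviving $H$-copy containing them, while lone-deleted edges are in no $H$-copy when removed). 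The key claim is that the residual $\hat{G}$ is $H$-closed: if some copy $H'\cong H$ in $\hat{G}$ had at most two closed edges, then since $H$ is strictly $2$-balanced with $m_2(H)>1$ and $H\neq K_3$ forces $e(H)\ge 4$, the copy $H'$ contains at least two non-closed edges; each lies only in $H'$, so they are $H$-equivalent in $\hat{G}$, contradicting termination of the first loop. Observe that, unlike in Theorem~\ref{thm:antiramsey0}, we do \emph{not} need \Cref{prop:superspacious} here: the vertex-disjoint requirement was precisely the obstruction that forced the $e(H)\ge 5$ hypothesis and the exclusion of $C_4$ there, but for constrained Ramsey with $K_{1,3}$ we are free to pair adjacent edges.

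Finally, I would apply \Cref{lem:fclosedpartition} to decompose $\hat{G}$ into $H$-blocks $B_1,\ldots,B_k$, and invoke \Cref{lem:notc4} on each $B_j$ (permissible since $m(B_j)\le m_2(H)$) to obtain a colouring of $B_j$ with no monochromatic $K_{1,3}$ and no rainbow $H$, using colour palettes that are pairwise disjoint across blocks and disjoint from the fresh colours introduced by the algorithm. Splicing these together yields an edge-colouring of $G$ witnessing $G\notconstrained(K_{1,3},H)$: at each vertex no colour appears three times (inside a single block this is ensured by Lemma~\ref{lem:notc4}, and across blocks/algorithm stages colours are distinct), and every copy of $H$ in $G$ either lies entirely inside some $B_j$ (handled by the block colouring) or contains both members of some deleted pair (hence two equally coloured edges).

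The main obstacle is the case $H=C_4$, which is explicitly excluded from \Cref{lem:notc4}. For this case I would argue separately that any $B$ with $m(B)\le m_2(C_4)=3/2$ satisfies $B\notconstrained(K_{1,3},C_4)$; this colouring statement is plausible even though the stronger $2$-bounded version used in \cite{npss} fails for $C_4$, because the $K_{1,3}$-free condition allows reusing each colour twice at a common vertex, so that one can colour $B$ by a carefully chosen union of matchings ensuring every $C_4$-copy meets at most three colour classes.
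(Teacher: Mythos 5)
Your proposal is fundamentally off target, for two reasons.

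First, it is circular. You begin by announcing that you will reduce the desired statement to "the deterministic colouring assertion of Lemma~\ref{lem:notc4} applied blockwise," and then in the final step of your argument you "invoke \Cref{lem:notc4} on each $B_j$." But \Cref{lem:notc4} is precisely the statement you are supposed to be proving; you cannot use it as a black box inside its own proof.

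Second, and more importantly, you are proving the wrong kind of statement. \Cref{lem:notc4} is a purely deterministic, finite-combinatorics assertion: given \emph{any} graph $B$ with $m(B)\le m_2(H)$, one must exhibit an edge-colouring of $B$ with no monochromatic $K_{1,3}$ and no rainbow $H$. It makes no reference to $G(n,p)$, to thresholds, or to w.h.p.\ events, so there is no role for \Cref{cor:blocksaresparse}, no conditioning on a high-probability event, and no greedy-deletion algorithm on a random graph. What you have written is essentially a reconstruction of the proof of \Cref{lem:notk3} (the probabilistic $0$-statement), which the paper proves by \emph{using} \Cref{lem:notc4} and \Cref{lem:c4} as inputs, together with \Cref{cor:blocksaresparse} and the block decomposition. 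Note also that the paper itself does not prove \Cref{lem:notc4}; it is imported verbatim as \cite[Lemma~26]{npss}, whose proof is a minimal-counterexample argument on the fixed sparse graph $B$. Your closing remark about needing to handle $H=C_4$ separately is moot: $C_4$ is explicitly excluded by the hypotheses of \Cref{lem:notc4}, and that case is covered by the separate \Cref{lem:c4} via Petersen's theorem. A correct attempt would have to start from a minimal graph $B$ with $m(B)\le m_2(H)$ assumed to satisfy $B\constrained(K_{1,3},H)$, extract degree and connectivity constraints from minimality, and derive a contradiction --- no random-graph machinery is involved.
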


	For $H = C_4$, since we are working with the $(K_{1,3}, H)$ constrained Ramsey property and not $2$-bounded colourings, we can prove a similar lemma.\footnote{Note that there does exist a graph $B$ such that $m(B) = m_2(C_4)$ and any 2-bounded colouring of $B$ contains a rainbow $C_4$ (see the proof of \cite[Lemma~27]{npss}).}  Its proof uses Vizing's theorem, that is, $\chi'(G) \in \{\Delta(G), \Delta(G) + 1\}$. 


	\begin{lemma}\label{lem:c4}
		Every graph $B$ such that $m(B) \leq m_2(C_4)$ satisfies $B \notconstrained (K_{1,3}, C_4)$.
	\end{lemma}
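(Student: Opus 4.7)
The plan is to prove $B \notconstrained (K_{1,3}, C_4)$ by strong induction on $v(B)$. Since $m_2(C_4) = 3/2$, the hypothesis reads $m(B) \le 3/2$, and in particular $e(B) \le 3v(B)/2$. Colouring connected components on disjoint palettes lets us assume $B$ is connected. I aim to produce an edge-colouring where every colour class has maximum degree at most $2$ (so no monochromatic $K_{1,3}$ arises) and every $C_4$ repeats a colour.

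If $B$ has a vertex $v$ with $\deg(v) \le 2$, I apply induction to $B - v$ (whose density is still at most $3/2$) and extend by assigning a single fresh colour $c$ to every edge incident to $v$. This colour has degree at most $2$ at $v$ and at most $1$ at every other vertex, so no monochromatic $K_{1,3}$ is created; and any $C_4$ through $v$ must use both edges at $v$ and hence has two edges coloured $c$, so is not rainbow.

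Otherwise $\delta(B) \ge 3$, and combined with $e(B) \le 3v(B)/2$ this forces $B$ to be $3$-regular. If $B$ is bridgeless, Petersen's theorem (\Cref{thm:peterson}) supplies a perfect matching $M$; I colour $M$ with a single colour $0$ and each cycle of the $2$-regular graph $B \setminus M$ with a distinct fresh colour. Each colour class has maximum degree at most $2$, so no monochromatic $K_{1,3}$ arises. For any $C_4 = v_1v_2v_3v_4$ in $B$, the matching edges it contains form an independent set within the $C_4$ and so number $0$, $1$, or $2$. If there are two, they share colour $0$. Otherwise, since every vertex has degree exactly $2$ in $B \setminus M$, at every vertex of $C_4$ incident to two non-matching edges of $C_4$ those two edges coincide with the two edges of $B \setminus M$ at that vertex; propagating this around $C_4$ forces all non-matching edges of $C_4$ to lie in a single cycle of $B \setminus M$ and hence to share a colour.

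Finally, if $B$ is cubic and contains a bridge $e = uv$, deleting $e$ splits $B$ into two smaller components $B_1 \ni u$ and $B_2 \ni v$, each satisfying $m(B_i) \le 3/2$. By induction each admits a suitable colouring; I combine them on disjoint palettes and assign $e$ a brand new colour. Since $e$ lies on no cycle (in particular no $C_4$), the required property is preserved. The main obstacle is the bridgeless cubic case: the key observation making the matching-plus-cycles decomposition work is that the degree-$2$ structure of $B \setminus M$ forces any three consecutive non-matching edges of a $C_4$ to lie in the same component of $B \setminus M$, after which the verification is routine.
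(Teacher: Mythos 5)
Your proof is correct, and it follows the same skeleton as the paper's argument (reduce to the cubic case via induction/minimality, invoke Petersen's theorem to get a perfect matching $M$, and then colour the 2-factor $B \setminus M$ together with $M$), but your colouring in the cubic bridgeless case is genuinely simpler and cleaner. The paper aims for something close to a proper $3$-edge-colouring: it alternates colours $1,2$ around the cycles of $B \setminus M$, which forces a case analysis on odd versus even cycles, a special edge $e_C$ in each odd cycle, and a split of $M$ into colour classes $3$ and $4$, followed by a somewhat delicate verification. You dispense with properness entirely: colour each cycle of $B\setminus M$ with a single distinct colour and give $M$ one more colour. Every colour class is a cycle or a matching, so has maximum degree at most $2$ and certainly no $K_{1,3}$; and your structural observation — that a $C_4$ contains $0$, $1$, or $2$ edges of $M$, and that whenever two consecutive edges of the $C_4$ avoid $M$ they must be consecutive in the same cycle of the $2$-factor — handles the rainbow check uniformly with no parity distinctions. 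A minor further difference: the paper deduces bridgelessness from minimality (every edge of a minimal counterexample lies on a $C_4$), whereas you handle a potential bridge directly in the induction by splitting at the bridge; both are fine, and as you observe the bridge lies on no cycle so the colourings on the two sides can simply be combined on disjoint palettes.
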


	\begin{proof}
		Assume for a contradiction that $B$ is a minimal graph on $n$ vertices such that $m(B) \le m_2(C_4) = 3/2$ and $B \constrained (K_{1,3}, C_4)$. 

		Suppose that there is a vertex $v$ in $B$ with degree at most $2$.
		By the minimality assumption on $B$, colour $B\setminus\{v\}$ to witness that $B\setminus\{v\} \notconstrained (K_{1,3}, C_4)$ and colour the (at most) two edges incident to $v$ with the same (new) colour. This yields a colouring of $B$ with no rainbow copy of $C_4$ and no monochromatic copy of $K_{1,3}$, contradicting our choice of $B$.

		Thus $B$ has minimum degree at least $3$, so $e(B) \ge 3n/2$.
		Since $e(B) \le m(B)n \le 3n/2$, we find that $e(B) = 3n/2$, implying that $\Delta(B) = 3$.

        Hence by Vizing's theorem, $\chi'(G) \in \{3,4\}$. If $\chi'(G) = 3$, then we have a proper edge colouring on $3$ colours, and thus trivially no rainbow copies of $C_4$ or monochromatic copies of $K_{1,3}$. If $\chi'(G) = 4$, set any edges of colour $4$ to be colour $3$. Then we have an edge colouring on $3$ colours and thus no rainbow copies of $C_4$. Moreover, we cannot have created a monochromatic copy of $K_{1,3}$ as the original 4 edge-colouring of $G$ was proper.
	\end{proof}

	We are now in a position to prove Lemma~\ref{lem:notk3}. Our proof will largely follow that of \cite[Theorem~5]{npss}. \begin{figure}[!ht]
		\begin{algorithmic}[1]
		\Procedure{Rainbow-colour-constrained}{$G = (V,E)$}
			\State $\hat{G}\gets G$
			\State col $\gets 0$
			\While{$\exists \text{ distinct } e_1, e_2 \in E(\hat{G}): e_1 \equiv_H e_2$ in $\hat{G}$}\label{line:e1e2con}
				\State colour $e_1, e_2$ with col\label{line:coloure1e2withcolcon}
				\State $\hat{G} \gets \hat{G}\setminus \{e_1, e_2\}$ and $\mbox{col} \ \gets \ \mbox{col} \ + 1$ \label{line:removee1e2iteratecolcon}
			\EndWhile\label{line:while1endcon}
			\While{$\exists e \in E(\hat{G})$: $e$ does not belong to a copy of $H$}\label{line:enotinFcopycon}
				\State colour $e$ with col\label{line:colourewithcolcon}
				\State $\hat{G} \gets \hat{G}\setminus \{e\}$ and $\mbox{col} \ \gets \ \mbox{col} \ + 1$ \label{line:removeeiteratecolcon}
			\EndWhile\label{line:while2endcon}
		\EndProcedure
		\State Remove isolated vertices in $\hat{G}$.\label{line:removeisolatedcon}
		\State $\{B_1, \ldots, B_k\} \gets H$-blocks obtained by applying \Cref{lem:fclosedpartition} on $\hat{G}$.\label{line:applyblocklemcon}
		\State Colour (properly) each $B_j$ without a rainbow copy of $H$ using distinct sets of colours (cf.\ text why the last two lines are possible)\label{line:finishcolouringcon}.
		\end{algorithmic}
		\caption{The algorithm \textsc{Rainbow-colour-constrained}.}\label{fig:constrainednotk3}
		\end{figure}

	\begin{proof}[Proof of Lemma~\ref{lem:notk3}.]
		Let $c$ be a constant given by \Cref{cor:blocksaresparse} when applied to $H$. Let $G \sim G(n,p)$ for $p \leq cn^{-1/m_2(H)}$. Then w.h.p.\ every $H$-block $B$ in $G$ satisfies $m(B) \le m_2(H)$. Assume this is indeed the case. We use Algorithm~\textsc{Rainbow-colour-constrained} (above) to find a colouring containing no monochromatic copy of $K_{1,3}$ and no rainbow copy of $H$. 
		Notice that the only difference between Algorithms~\textsc{Rainbow-colour} and \textsc{Rainbow-colour-constrained} is in the condition in line~\ref{line:e1e2con}. We may proceed as in the proof of \Cref{thm:antiramsey0}, noting that $\hat{G}$ is $H$-closed by construction.   Indeed, every edge and vertex of $\hat{G}$ is contained in a copy of $H$ by lines~\ref{line:enotinFcopycon} and \ref{line:removeisolatedcon}, and line~\ref{line:e1e2con} implies that every copy of $H$ must contain at least three closed edges, namely the (at least three) edges that are in at least two copies of $H$ (because otherwise, using $e(H) \ge 4$, there is a copy $H'$ of $H$ with at least two edges that do not appear in other copies of $H$, contradicting line~\ref{line:e1e2con}). 
        Now $\hat{G}$ being $H$-closed means line~\ref{line:applyblocklemcon} succeeds, and so by applying \Cref{lem:notc4,lem:c4} we can colour each $H$-block $B_i$ as desired.
        We do so by using disjoint sets of colours (that also avoid the colours already used by the algorithm) for different blocks, and hence w.h.p.\ Algorithm~\textsc{Rainbow-colour-constrained} finds the desired colouring of $G$.
	\end{proof}

\subsection{Reduction to colouring statement when $H$ is a triangle}

	We cannot prove the 0-statements of Theorems~\ref{thm:constrained} for $H = K_3$ and Theorem~\ref{thm:constrained_K3} using a similar strategy to the proof of Theorem~\ref{thm:antiramsey0}. 
    Indeed, one cannot apply \Cref{prop:superspacious} since it is only applicable to graphs with at least five edges. Furthermore, every edge of a $K_3$-block is closed, i.e.\ contained in at least two $K_3$-copies, a property too strong to aim for in a proof.
	
    Instead of considering $K_3$-blocks, we will prove the 0-statements of Theorems~\ref{thm:constrained} for $H = K_3$ and Theorem~\ref{thm:constrained_K3} by considering a broader class of structures we will call \emph{triangle-connected} graphs. 
 
    We need the following definition. A \emph{triangle sequence} $\vecT$ is a sequence $T_0, \ldots, T_{\ell}=T$ such that:
	\begin{itemize}
		\item
			$T_0$ is a triangle.
		\item
			For $i \in [\ell]$, there is a vertex $v_i$ in $V(T) \setminus V(T_{i-1})$ and an edge $e_i$ in $T_{i-1}$ that together form a triangle in $T$, such that $V(T_i) = V(T_{i-1}) \cup \{v_i\}$ and $E(T_i)$ is the union of $E(T_{i-1})$ with all edges of $T$ between $v_i$ and $V(T_{i-1})$ (which in particular includes the two edges between $v_i$ and $e_i$).
	\end{itemize} 

    Further, we say that $i$ is a \emph{regular step} if $\deg(v_i, T_{i-1}) = 2$. 
	We say that a graph $T$ is \emph{triangle-connected} if $T$ has no isolated vertices, every edge in $T$ is in a triangle, and the $3$-uniform hypergraph on $V(T)$ whose edges are the triangles in $T$ is tightly-connected\footnote{A hypergraph is tightly-connected if it can be obtained by starting with a hyperedge and adding hyperedges one by one, such that every added hyperedge intersects with one of the previous hyperedges in 2 vertices.}. 

	Observe also that if $T$ is triangle-connected, then there is a triangle sequence $T_0, \ldots, T_{\ell}$ with $T_{\ell} = T$ (start with an arbitrary triangle $T_0$ and always add all edges between $v_i$ to $T_{i-1}$ for some $v_i$ which is in a triangle with an edge in $T_{i-1}$; 
    this process will only stop when we have exhausted all vertices of $T$, by triangle-connectivity).
	In the next lemma we show that if $p$ is sufficiently smaller than $n^{-1/2}$ then, w.h.p., every triangle-connected subgraph of $G(n,p)$ has low density.
	Using this, in order to prove the 0-statements of Theorem~\ref{thm:constrained} for $H$ being a triangle and Theorem~\ref{thm:constrained_K3}, it suffices to show that every triangle-connected graph with low density can be coloured appropriately (see \Cref{lem:k>3_triangle_colouring,lem:k=3_triangle_colouring}).

    \begin{lemma} \label{lem:reduction-to-colouring-triangle}
		\hfill
		\begin{itemize}
			\item
				If $p \le n^{-1/2}/20$ then, w.h.p., every triangle-connected subgraph $T$ of $G(n,p)$ satisfies $e(T) \le 2v(T)$.
			\item
				If $p = o(n^{-1/2})$ then, w.h.p., every triangle-connected subgraph $T$ of $G(n,p)$ satisfies $e(T) < 2v(T)$.
		\end{itemize}
	\end{lemma}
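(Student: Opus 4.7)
The plan is to prove both statements by the first-moment method, bounding the expected number of triangle-connected subgraphs $H \subseteq G(n,p)$ with $e(H) > 2v(H)$ (respectively $e(H) \ge 2v(H)$). Every triangle-connected graph admits a triangle sequence $H_0 \subset H_1 \subset \cdots \subset H_\ell$ with $\ell = v(H)-3$: $H_0$ is an initial triangle, and at each step $i \ge 1$ one introduces a new vertex $v_i$ together with $d_i \ge 2$ edges to $V(H_{i-1})$, at least two of whose endpoints span an existing edge of $H_{i-1}$. Writing $r_i = d_i - 2$ and $S = \sum_i r_i$, one has $e(H) = 2v(H) - 3 + S$, so $e(H) > 2v(H)$ is equivalent to $S \ge 4$ and $e(H) \ge 2v(H)$ to $S \ge 3$.

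The core estimate is a step-by-step union bound over triangle sequences. For a fixed composition $(r_1, \ldots, r_\ell)$ of $S$ and a given choice of the initial triangle (at most $\binom{n}{3}$ ways, contributing $p^3$ to the expected count), at step $i$ there are at most $n$ choices for $v_i$, at most $e(H_{i-1})$ choices for the anchor edge, and at most $\binom{v_{i-1}-2}{r_i}$ choices for the further neighbors; the probability that the $d_i$ new edges lie in $G(n,p)$ is $p^{d_i}$. Using $np^2 \le 1/400$, the inequality $\binom{v_{i-1}-2}{r_i} p^{r_i} \le (vp)^{r_i}/r_i!$ with $vp \le v/(20\sqrt n)$, and the monotone bound $e(H_{i-1}) \le 2i+1+S$, the step-wise product is at most
\[
\frac{1}{400^\ell}\,\prod_{i=1}^{\ell}(2i+1+S)\cdot \frac{(vp)^{S}}{\prod_i r_i!}.
\]
Multiplying by $\binom{n}{3}p^3$ and summing over $(r_i)$ via $\sum 1/\prod r_i! \le \ell^S/S!$, the total expectation is of order $n^{3/2}\cdot \prod(2i+1+S)/400^\ell \cdot (v\ell p)^S/S!$. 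A Stirling comparison bounds $\prod(2i+1+S)/400^\ell$ by $(v/(200e))^v$ up to polynomial factors, and $(v\ell p)^S \le (v^2 p)^S$ provides an extra decay of order $n^{-(S-3)/2}$ relative to $n^{3/2}$; summing over $v$ and over $S \ge 4$ then yields $o(1)$.

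For the strict inequality statement, $p = o(n^{-1/2})$ makes $np^2 = o(1)$, so each of the $\ell$ regular steps contributes an extra vanishing factor; the analogous computation then handles $S \ge 3$ rather than only $S \ge 4$, giving $e(H) < 2v(H)$ w.h.p.

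The main obstacle I anticipate is carefully carrying out the Stirling estimate on $\prod(2i+1+S)/400^\ell$ so that the resulting bound is $o(\sqrt n)$ across all relevant $v$, particularly in the regime $v \sim \sqrt n$ where the extras factor $vp$ is only of constant order and where the geometric savings $(np^2)^\ell$ must do almost all of the work. The specific constant $20$ in the hypothesis $p \le n^{-1/2}/20$ is chosen to provide precisely this slack, ensuring that the product $(v/(200e))^v$ remains dominated by the decay from the extras and the base factor $n^{-(S-3)/2}$.
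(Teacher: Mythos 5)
Your high-level strategy matches the paper's: encode triangle-connected subgraphs via triangle sequences, observe $e(H) = 2v(H) - 3 + S$ with $S = \sum_i (d_i - 2)$, and kill $S \ge 3$ (resp.\ $S \ge 4$) by a first-moment bound exploiting $np^2 \le 1/400$ (resp.\ $np^2 = o(1)$). However, the counting step has a fatal gap that no Stirling estimate can repair, and your own worry in the last paragraph is exactly the problem.

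You allow $e(H_{i-1}) = \Theta(i)$ choices for the anchor edge at step $i$, producing the factor $\prod_{i=1}^\ell (2i + 1 + S) \approx 2^\ell \ell!$. Against this you have the geometric decay $(np^2)^\ell \le 400^{-\ell}$. But $\ell! / C^\ell$ diverges superexponentially for any constant $C$: once $\ell \gtrsim 200e$, the term $(\ell / 200e)^\ell$ you write down is not $o(\sqrt n)$ but is instead astronomical, and $\ell$ can be as large as $n - 3$. The extra decay you invoke from $(vp)^S$ is constant in $\ell$ (since $S \in \{3,4\}$ is fixed), and cannot touch this. So the expected count does not converge, and the first-moment bound fails as written. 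This is not a tightening issue but a wrong order of magnitude.

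The paper's fix is precisely to avoid paying $\Theta(\ell)$ per step. It restricts to $\rho$-\emph{minimal} triangle sequences (those with $r(\vecH) \ge \rho$ for which no proper subgraph carries a sequence with $r \ge \rho$) and proves a rigidity property (its Propositions \ref{prop:rho-minimal} and \ref{prop:Ji}): for all but $O(1)$ of the regular steps $i$, the anchor edge $e_i$ must equal $f_j$ for some earlier $j$ with $|J(j)| = 1$, and for all but $O(1)$ indices $i' < i$ one has $|J(i')| = 1$ with element $< i$. This caps the number of admissible anchors at each step by an absolute constant ($200$ in the paper), yielding $m_{\ell,\rho} \le 2^{1100} \ell^{600} \cdot 200^{\ell}$ rather than $\ell!$. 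Only with this exponential (not factorial) count does $\sum_\ell m_{\ell,\rho} \, n^{\ell+3} p^{2\ell + 3 + \rho} = \sum_\ell \ell^{O(1)} (200 np^2)^{\ell+3} \cdot p^{\rho - 3}$ sum to $o(1)$. You would need to import both the notion of $\rho$-minimality and the structural lemma that forces most anchors to be essentially determined; the naive per-step union bound cannot be salvaged.
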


    Before we prove Lemma~\ref{lem:reduction-to-colouring-triangle}, we develop our understanding of triangle sequences.
		
	For a triangle sequence $\vecT = (T_0, \ldots, T_{\ell})$, define 
	\begin{equation} \label{eqn:rH}
		r(\vecT) = \sum_{i \in [\ell]} \big(\deg(v_i, T_{i-1}) - 2\big),
	\end{equation}
	and notice that $e(T_{\ell}) = 3 + 2\ell + r(\vecT)$ and $v(T_{\ell}) = \ell + 3$.
	
	We therefore have
	\begin{enumerate}[label = \ding{72}]
		\item \label{itm:r}
			If $T$ is triangle-connected and $e(T) \ge 2v(T)$ then there is a triangle sequence $\vecT = (T_0, \ldots, T_{\ell})$ with $T_{\ell} = T$ and $r(\vecT) \ge 3$. If $e(T) > 2v(T)$ then there is a triangle sequence $\vecT = (T_0, \ldots, T_{\ell})$ with $T_{\ell} = T$ and $r(\vecT) \ge 4$.
	\end{enumerate}

	Let $\rho$ be a positive integer. Say that a triangle sequence $\vecT = (T_0, \ldots, T_{\ell})$ is \emph{$\rho$-minimal} if $r(\vecT) \ge \rho$ and there is no triangle sequence $\vecT' = (T_0', \ldots, T_{\ell'}')$ with $r(\vecT') \ge \rho$ and $T_{\ell'}' \subsetneq T_{\ell}$.
	In the next claim we prove basic properties of $\rho$-minimal triangle sequences.

	\begin{proposition} \label{prop:rho-minimal}
		Let $\vecT = (T_0, \ldots, T_{\ell})$ be a $\rho$-minimal triangle sequence. Then
		\begin{enumerate}[label = (\alph*)]
			\item \label{itm:rho-min-exactly-rho}
				$r(\vecT) = \rho$.
			\item \label{itm:rho-min-irreg}
				There are at most $\rho$ irregular steps.
			\item \label{itm:rho-min-fi}
				For all but at most $\rho(\rho+2)$ regular steps $i$, there is an edge $f_i$ between $v_i$ and $e_i$, such that $f_i$ forms a triangle together with $v_j$, for some $j > i$.
		\end{enumerate}
	\end{proposition}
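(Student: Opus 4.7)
My plan is to prove each part by contradicting the $\rho$-minimality of $\vecH$: in each case I will produce a valid triangle sequence $\vecH'$ with $r(\vecH') \ge \rho$ whose final graph is a proper subgraph of $H_\ell$.

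For (a), suppose $r(\vecH) \ge \rho + 1$. By \eqref{eqn:rH} some step $k$ satisfies $\deg(v_k, H_{k-1}) \ge 3$, so $v_k$ has a neighbour $u$ in $H_{k-1}$ that is not an endpoint of $e_k$. Deleting the single edge $v_k u$ preserves the triangle $v_k + e_k$, so the modified sequence is valid, has $r$ equal to $r(\vecH) - 1 \ge \rho$, and lives on $H_\ell \setminus \{v_k u\} \subsetneq H_\ell$, contradicting minimality. For (b), every irregular step contributes at least $1$ to $r(\vecH) = \rho$, giving at most $\rho$ of them.

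The main work is (c). Call a regular step $i$ \emph{bad} if no $v_j$ with $j > i$ is adjacent to both $v_i$ and an endpoint of $e_i$; otherwise a suitable $f_i$ exists. The key claim is that every bad regular $i$ must have some later $v_j$ ($j > i$) adjacent to $v_i$. If not, then $v_i$'s only neighbours in $H_\ell$ are the endpoints of $e_i$, and I claim the step adding $v_i$ can be moved to the end of the sequence: for each $k > i$, the edge $e_k$ cannot involve $v_i$ (if $e_k = v_i w$ with $w \in e_i$ then $v_k$ would witness $i$ being good), and no extra edge of step $k$ touches $v_i$ (by the no-later-neighbour assumption). After moving $v_i$ to the end and dropping that trailing regular step we obtain a valid sequence on $H_\ell \setminus v_i \subsetneq H_\ell$ with $r$ unchanged, contradicting $\rho$-minimality.

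Hence for each bad regular $i$ we may choose $j(i)$ minimal with $i < j(i)$ and $v_{j(i)} \sim v_i$. The edge $e_{j(i)}$ cannot involve $v_i$: the only candidates are $v_i w$ with $w \in e_i$ (forbidden as this would witness $i$ good) or $v_i v_k$ with $i < k < j(i)$ (contradicting minimality of $j(i)$, which would supply an earlier neighbour). Consequently $v_i v_{j(i)}$ is a genuine extra edge at step $j(i)$, not part of the triangle $v_{j(i)} + e_{j(i)}$, and distinct bad regular $i$ yield distinct such edges. Since there are only $r(\vecH) = \rho$ extra edges in total, there are at most $\rho \le \rho(\rho+2)$ bad regular steps. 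The subtlest point is verifying the reordering argument in the previous paragraph—specifically that neither $e_k$ nor any extra edge of step $k$ touches $v_i$—which is exactly why both the ``bad'' definition and the no-later-neighbour hypothesis are needed.
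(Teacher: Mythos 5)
Your arguments for (b) and (c) are sound, but your proof of (a) has a real gap. You delete the edge $v_k u$ from \emph{some} irregular step $k$ and assert that the modified sequence is still a valid triangle sequence, but this is not automatic: if some later step $j > k$ has $e_j = v_k u$, then after the deletion $v_j$ may no longer form a triangle with any edge of $H_{j-1}\setminus\{v_k u\}$, and the sequence breaks. You only check that the triangle $v_k \cup e_k$ is preserved, which is not enough. The paper sidesteps this by working at the \emph{last} step: since $H_{\ell-1} \subsetneq H_\ell$, $\rho$-minimality gives $r(H_0,\ldots,H_{\ell-1}) < \rho$, so $\deg(v_\ell,H_{\ell-1}) - 2 = r(\vecH) - r(H_0,\ldots,H_{\ell-1}) \ge 1$, i.e.\ the last step is irregular; deleting an extra edge there cannot disturb any later step since there are none. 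Your argument can be repaired by taking $k = \ell$ and supplying this minimality observation, but as written it is incomplete.

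For (c) you take a genuinely different and, in fact, sharper route than the paper. The paper's proof goes through a claim that every regular step $i$ has some $j>i$ with $v_i \in e_j$ (used under the extra hypothesis that $v_i$ is not touched by an irregular step), then bounds the exceptional regular steps by the number $\rho(\rho+2)$ of edges added at irregular steps. You instead show that each bad regular step $i$ forces a \emph{distinct} extra edge $v_i v_{j(i)}$, where $j(i)$ is the first later index with $v_{j(i)} \sim v_i$; since there are only $r(\vecH)=\rho$ extra edges in total, there are at most $\rho$ bad steps. This is both more direct and gives a better constant ($\rho$ versus $\rho(\rho+2)$), and your removal argument (under the hypothesis that $v_i$ has \emph{no} later neighbour at all) cleanly guarantees that removing $v_i$ does not change $r$, because no later step's degree can drop. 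The key steps — that $e_{j(i)}$ cannot contain $v_i$ (else $w\in e_i$ would witness $i$ good, or minimality of $j(i)$ would be violated) and that distinct $i$ give distinct edges $v_iv_{j(i)}$ (the indices are ordered so the pairs $\{i,j(i)\}$ cannot coincide for different $i$) — all check out.

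Note, of course, that your (c) argument relies on $r(\vecH)=\rho$ from part (a), so fixing (a) is necessary before (c) stands.
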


	\begin{proof}
		For \ref{itm:rho-min-exactly-rho}, notice that $r(\vecT) = r(T_0, \ldots, T_{\ell-1}) + \deg(v_{\ell}, T_{\ell-1}) - 2$.
        By minimality, we have $r(T_0, \ldots, T_{\ell-1}) < \rho$, showing that $\deg(v_{\ell}, T_{\ell-1}) \ge 3$.
        If $r(\vecT) > \rho$, then remove an edge between $v_{\ell}$ and $V(T_{\ell-1}) \setminus V(e_{\ell})$ to obtain a graph $T_{\ell}'$,
        and notice that $T_0, \ldots, T_{\ell-1}, T_{\ell}'$ is a triangle sequence with $r(T_0, \ldots, T_{\ell-1}, T_{\ell}') \ge \rho$, contradicting minimality.

		For \ref{itm:rho-min-irreg}, notice that every irregular step contributes at least $1$ to $r(\vecT)$, so by \ref{itm:rho-min-exactly-rho} there are at most $\rho$ irregular steps.

		Notice that the number of edges added during irregular steps is at most $\rho(\rho+2)$, using \ref{itm:rho-min-irreg} to see that there are at most $\rho$ irregular steps, and using \ref{itm:rho-min-exactly-rho} to see that at most $\rho+2$ edges are added during any single irregular step.
		Thus there are at most $\rho(\rho+2)$ regular steps $i$ such that $v_i$ is contained in an edge added during an irregular step.
  
		Finally, consider a regular step $i$ such that $v_i$ is not contained in an edge added during an irregular step. 
  
        \begin{claim}\label{claim:regular}
		  Let $i$ be a regular step such that $v_i$ is not contained in an edge added during any irregular step. Then there exists $j > i$ such that $v_i \in e_j$.
		\end{claim} \begin{proof} Suppose for contradiction that this fails for some regular step $i$. Define a new triangle sequence $\vecT' = (T_0', \ldots, T_{\ell-1}')$, where
		\begin{equation*}
			T_j' = \left\{
				\begin{array}{ll}
					T_j & \text{if $j < i$}, \\
					T_{j+1} \setminus \{v_i\} & \text{otherwise}.
				\end{array}
				\right.
		\end{equation*}
		Notice that $\vecT'$ is indeed a triangle sequence (using that no edge $e_j$ with $j > i$ in the original sequence involved the vertex $v_i$).
		Moreover, $r(\vecT') = r(\vecT)$, because $i$ is a regular step in the original sequence and is not contained in an edge added during any irregular step. That is, it contributes $0$ to the sum defining $r(\vecT)$ (see \eqref{eqn:rH}).
		The existence of the new sequence $\vecT'$ contradicts the minimality of $\vecT$, proving the claim. \qedhere \end{proof} 
  
        By Claim~\ref{claim:regular}, let $j > i$ be minimal such that $v_i \in e_j$. Then, right before step $j$, the vertex $v_i$ is incident only to the two edges joining it to $e_i$. This implies the existence of an edge $f_i$ as in \ref{itm:rho-min-fi}.
	\end{proof}

	In a $\rho$-minimal triangle sequence $T_0, \ldots, T_{\ell}$, for every regular step $i$ let $f_i$ be an edge as in \Cref{prop:rho-minimal}~\ref{itm:rho-min-fi}, if it exists. 
    Define $J(i) := \{j > i : \text{$v_j$ and $f_i$ forms a triangle}\}$, with $J(i) := \emptyset$ if such $f_i$ does not exist or if $i$ is an irregular step.
	Then by Claim~\ref{prop:rho-minimal}~\ref{itm:rho-min-fi} we have $J(i) \neq \emptyset$ for all but $\rho(\rho+2)$ regular steps $i$.
    In the following claim we show that for almost all regular $i$ we have that $|J(i)| = 1$, and moreover for almost all $i$, the edge $e_i$ is equal to $f_j$ for some $j < i$ satisfying $|J(j)| = 1$. 
    These properties will be used to count the number of non-isomorphic $\rho$-minimal triangle sequences.

	\begin{proposition} \label{prop:Ji}
		Let $T_0, \ldots, T_{\ell}$ be a $\rho$-minimal triangle sequence, with $\rho \le 4$. Then, the following holds.
		\begin{enumerate}[label = (\alph*)]
			\item \label{itm:rho-min-Jij}
				For every $i \in [\ell]$, for all but at most $200$ values of $i' < i$, it holds that $|J(i')| = 1$ and the single element $j$ in $J(i')$ satisfies $j < i$.
			\item \label{itm:rho-min-Ji}
				For all but at most $260$ regular steps $i$, we have $e_i = f_{j}$ for some $j < i$ such that $|J(j)| = 1$.
		\end{enumerate}
	\end{proposition}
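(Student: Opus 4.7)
The plan is to classify the exceptional indices and use $\rho$-minimality, together with Proposition~\ref{prop:rho-minimal}, to bound each class by a small constant. For part (a), fix $i \in [\ell]$ and partition the bad $i' < i$ into four disjoint classes: (I) irregular $i'$, bounded by $\rho \le 4$; (II) regular $i'$ with $J(i') = \emptyset$, bounded by $\rho(\rho+2) \le 24$; both immediate from Proposition~\ref{prop:rho-minimal}(b,c). The remaining classes are (III) regular $i'$ with $|J(i')| \ge 2$, and (IV) regular $i' < i$ with $J(i') = \{j\}$ and $j \ge i$; each contributes a constant bound, summing to at most $200$.

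For class (III), the key observation is that whenever $j \in J(i')$ is a \emph{regular} step, then the two neighbours of $v_j$ in $H_{j-1}$ are exactly the endpoints of $e_j$, and since $v_j$ is adjacent to both endpoints of $f_{i'}$, we must have $e_j = f_{i'}$. Hence if $|J(i')| \ge 2$ contained two regular elements $j_1 < j_2$, one could delete $v_{j_1}$ from the sequence (since $v_{j_2}$ continues to witness the triangle on $f_{i'}$) without decreasing $r(\vecH)$, contradicting $\rho$-minimality. Thus $|J(i')| - 1$ is at most the number of \emph{irregular} elements of $J(i')$, and each irregular step $j$ with $d_j := \deg(v_j, H_{j-1}) \ge 3$ belongs to at most $\binom{d_j}{2}$ such sets. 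Since $\sum_{j \text{ irregular}} (d_j - 2) = r(\vecH) = \rho \le 4$, a short convexity computation yields a constant bound. For class (IV), each $i'$ maps under $J$ to a unique $j \ge i$, and preimages are bounded by at most $2$ per regular $j$ (corresponding to the two endpoints of $e_j$) and at most $\binom{d_j}{2}$ per irregular $j$; the set of relevant $j \ge i$ is controlled by truncating $\vecH$ at step $i-1$, inside which these $i'$ form regular steps with empty $J$-set, so a minimal-core argument in the style of Proposition~\ref{prop:rho-minimal}(c) applies with $r$-budget at most $\rho$.

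For part (b), we relate the condition $e_i = f_j$ to the step $k$ at which $e_i$ was introduced: for regular $k$, $e_i$ is one of the two edges added at $k$, and is equal to $f_k$ except when $f_k$ is chosen to be the other edge. Thus the failure of (b) splits into: $e_i \in H_0$ (at most $3$ cases), $e_i$ added during an irregular step (boundedly many edges via $r(\vecH) \le \rho$), $e_i$ being the ``non-$f_k$'' edge at a regular $k$ (controlled by a Proposition~\ref{prop:rho-minimal}(c)-style argument), or $|J(k)| \ne 1$ (controlled by part (a) applied to $i$, since $i \in J(k)$). Summing all contributions yields at most $260$ bad regular $i$. The main obstacle will be rigorously justifying the exchange argument in class (III): removing $v_{j_1}$ must be shown to yield a valid triangle sequence with the same $r$-value, which may require a cascade of removals if later steps reference $v_{j_1}$ via $e_k$ or irregular edges; carefully bounding this cascade, and ruling out the case where the cascade itself strictly reduces $r$, is the technical heart of the argument.
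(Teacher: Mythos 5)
Your decomposition into classes (I)--(IV) for part (a), and the observation that a regular $j\in J(i')$ forces $e_j = f_{i'}$, are both correct and useful. However, your proposal takes a genuinely different route from the paper for the two nontrivial classes, and you have correctly identified where it breaks down: the exchange argument for class (III) has a real gap that you flag but do not close. The paper's Proposition~\ref{prop:rho-minimal}(c) is proved precisely by showing that in a $\rho$-minimal sequence \emph{every} regular step $i$ has its vertex $v_i$ appearing in $e_j$ for some $j>i$, so $v_{j_1}$ is always referenced later and can never be deleted without a cascade. Taming that cascade is not a small technicality: cascaded removals may hit irregular steps (lowering $r$ below $\rho$, killing the contradiction) or may not terminate in a subgraph at all, and no bound on the cascade length is evident. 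Your class (IV) argument has the same flavour of difficulty: the ``truncate at step $i-1$'' sequence need not be $\rho$-minimal, so you cannot invoke Proposition~\ref{prop:rho-minimal}(c) on it directly, and the $r$-budget of the truncation is not controlled by $\rho$ alone. Part (b) as you sketch it also does not cleanly reduce to (a): you need to bound the number of regular $i$ whose $e_i$ fails to be $f_j$ for a \emph{good} $j$, and your four-way split does not obviously account for the interaction between the choice of $f_k$ at step $k$ and the value of $|J(k)|$.

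The paper avoids all of this with a short double-counting argument on triangles: the number of triangles in $H_\ell$ is at most $\ell + 57$ (one per regular step, one for $H_0$, and at most $\binom{6}{2}-1$ per irregular step), while each pair $(i,j)$ with $j\in J(i)$ contributes a distinct triangle $f_i\cup\{v_j\}$, so $\sum_i |J(i)| \le \ell+57$. Since all but $\rho(\rho+2)$ regular steps contribute at least one to this sum, the excess (i.e.\ the number of $i$ with $|J(i)|\ge 2$) is at most a constant; the same count restricted to triangles outside $H_{i-1}$ bounds the number of $i'<i$ whose unique $J$-element is $\ge i$. Part (b) then follows by subtracting the few triangles arising from irregular steps. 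This counting sidesteps minimality-exchange reasoning entirely and is what makes the explicit constants $200$ and $260$ cheap to obtain. If you want to pursue your approach, the step you must supply is a termination-and-$r$-preservation lemma for the deletion cascade, and that appears to require essentially the structural information the counting argument gives for free.
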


	Note that for clarity of exposition, we make no attempt to optimise the constants in the lemmas of this section.

	\begin{proof}
		Let $x$ be the number of regular steps $i'$ with $|J(i')| \ge 2$.
		Then the number of triangles in $T_{\ell}$ is at least 
		\begin{equation*}
			\sum_{i \in [\ell]}|J(i)| \ge \#\{\text{regular steps}\} - \rho(\rho+2) + x \ge \ell - \rho - \rho(\rho+2) + x \ge \ell - 28 + x.
		\end{equation*}
		For the first inequality we used that every pair $(i,j)$ with $j \in J(i)$ gives rise to the triangle $f_i \cup \{v_j\}$, and these triangles are distinct for different pairs, and moreover we used that $J(i) \neq \emptyset$ for all but at most $\rho(\rho+2)$ regular steps $i$ by \Cref{prop:rho-minimal}~\ref{itm:rho-min-fi}.
		For the second inequality we used that there are at most $\rho$ irregular steps, by \Cref{prop:rho-minimal}~\ref{itm:rho-min-irreg}.
		The number of triangles is also at most
		\begin{equation*}
			1 + \ell + \rho \cdot \left(\binom{6}{2} - 1\right) \le \ell + 57,
		\end{equation*}
		because regular steps (and step $0$) give rise to exactly one new triangle each, and irregular steps, of which there are at most $\rho$, yield at most $\binom{6}{2}$ new triangles each.
		Altogether, we get that $x \le 85$.

		Fix $i \in [\ell]$. 
		Let $y$ be the number of regular steps $i' < i$ with $|J(i')| = 1$ for which the single element $j \in J(i')$ satisfies $j \ge i$.
		The number of triangles in $T_{\ell}$ which are not in $T_{i-1}$ is at least
		\begin{equation*}
			\sum_{i'}\big|J(i') \cap [i, \ell]\big|
			\ge y + \sum_{i' \ge i}|J(i')|
			\ge y + \ell - i - \rho - \rho(\rho+2)
			\ge \ell - i - 28 + y.
		\end{equation*}
		Here we used that every pair $(i',j)$ with $j \ge i$ and $j \in J(i')$ gives rise to the triangle formed by $f_{i'} \cup \{v_j\}$ which is not in $T_{i-1}$, and we also used that $|J(i')| \ge 1$ for all but at most $\rho(\rho+2)$ regular steps $i'$.
		The number of triangles in $T_{\ell}$ which are not in $T_{i-1}$ is also at most
		\begin{equation*}
			\ell - i + 1 + \rho \cdot \left(\binom{6}{2} - 1\right)
			\le \ell - i + 57,
		\end{equation*}
		where again we used that regular steps gives rise to a unique new triangle, and irregular steps yield at most $\binom{6}{2}$ new triangles.
		It follows that $y \le 85$.

		It is now easy to conclude \ref{itm:rho-min-Jij}. Indeed, every $i' < i$ satisfies one of the following: $i'$ is irregular; $|J(i')| \ge 2$; $|J(i')| = 1$ and the unique element $j$ in $J(i')$ satisfies $j \ge i$; $i'$ is regular and $J(i') = \emptyset$; or $|J(i')| = 1$ and the single element $j$ in $J(i')$ satisfies $j < i$.
		There are at most four irregular steps, at most $85$ steps of the second type, at most $85$ steps of the third type, at most $\rho(\rho+2) \le 24$ steps of the fourth type, and so there are at least $i - (4 + 85 + 85 + 24) = i - 198 \ge i - 200$ steps of the fifth type, as claimed in \ref{itm:rho-min-Jij}.

		For \ref{itm:rho-min-Ji}, notice that the number of regular $i$, for which there is $j < i$ with $e_i = f_j$ and $|J(j)| = 1$, is at least the number of triangles of the form $f_j \cup \{v_i\}$ with $|J(j)| = 1$ and $i \in J(j)$,
		minus the number of triangles of form $e \cup \{v_i\}$ with $i$ irregular and $e \in E(T_{i-1})$.
		The first quantity is at least $(\ell - 1) - 200$, by \ref{itm:rho-min-Jij} (with $i = \ell$). The second quantity is at most $\rho \cdot \binom{6}{2} \le 60$.
		It follows that the number of $i$ as in \ref{itm:rho-min-Ji} is at least $(\ell - 200) - 60 \ge \ell - 260$.
	\end{proof}

	We are now in a position to prove Lemma~\ref{lem:reduction-to-colouring-triangle}.
	
    \begin{proof}[Proof of Lemma~\ref{lem:reduction-to-colouring-triangle}]
		Let $m_{\ell,\rho}$ be the number of non-isomorphic $\rho$-minimal triangle sequences $T_0, \ldots, T_{\ell}$. Then, for $\ell \ge 1$,
		\begin{align*}
			m_{\ell, \rho} 
			& \le \binom{\ell}{\le 4} \cdot \binom{\ell}{\le 260} \cdot \binom{\ell + 3}{\le 6}^{\rho} \cdot (2\ell + 7)^{260} \cdot 200^{\ell} \\[.7em]
			& \le 2^{1 + 1 + 4 + 2 \cdot 6 \cdot 4 + 4 \cdot 260} \cdot \ell^{4+260+24+260} \cdot 200^{\ell} 
			\le 2^{1100} \cdot \ell^{600} \cdot 200^{\ell}.
		\end{align*}
		(Here $\binom{\ell}{\le i}$ stands for $\binom{\ell}{0} + \ldots + \binom{\ell}{i}$.)
		Indeed, in the first inequality, the term $\binom{\ell}{\le 4}$ bounds the number of ways to choose which steps are irregular,
        the term $\binom{\ell}{\le 260}$ bounds the number of ways to choose which regular steps $i$ do not satisfy $e_i = f_j$ for some $j < i$ with $|J(j)| = 1$ (using \Cref{prop:Ji}~\ref{itm:rho-min-Ji}), 
        the term $\binom{\ell + 3}{\le 6}^{\rho}$ bounds the number of ways to choose the neighbours of $v_i$ in $T_{i-1}$ for irregular $i$, 
        the term $(2\ell + 7)^{260}$ bounds the number of ways to choose the edge $e_i$ for regular $i$ for which it is not the case that $e_i = f_j$ with $|J(j)| = 1$ (using that $e(T_{\ell}) = 3 + 2\ell + r(\vecT) \le 2\ell + 7$), 
        and finally the term $200^{\ell}$ bounds the number of ways to choose $e_i$ for all other regular $i$ (using \Cref{prop:Ji}~\ref{itm:rho-min-Jij}).
		For the inequality we used $\binom{\ell}{\le i} \le \ell^i + 1 \le 2\ell^i$, $\ell + 3 \le 4\ell$, and $2\ell + 7 \le 16\ell$ for $\ell \ge 1$.

		We now complete the proof of the lemma. First, suppose that $p = o(n^{-1/2})$. 
        Then the expected number of $3$-minimal triangle sequences $T_0, \ldots, T_{\ell}$ with $T_{\ell} \subseteq G(n,p)$ is at most
		\begin{align*}
			\sum_{\ell \ge 1} m_{\ell,3} \cdot n^{\ell + 3} \cdot p^{2\ell+6}   
			\le \sum_{\ell \ge 1} 2^{1100} \cdot \ell^{600} \cdot \left(200 np^2\right)^{\ell+3} = o(1).
		\end{align*}
		Indeed, for the first inequality we used that $e(T_{\ell}) = 3 + 2\ell + r(\vecT) = 2\ell + 6$.
		Thus, w.h.p. $G(n,p)$ has no $3$-minimal triangle sequences, implying that there are no triangle sequences $\vecT$ with $r(\vecT) \ge 3$, 
        which in turn shows that there are no subgraphs $T$ which are triangle-connected and satisfy $e(T) \ge 2v(T)$ by \ref{itm:r}.

		Next, suppose that $p \le n^{-1/2}/20$. Then the expected number of $4$-minimal triangle sequences $T_0, \ldots, T_{\ell}$ with $T_{\ell} \subseteq G(n,p)$ is at most
		\begin{align*}
			\sum_{\ell \ge 1} m_{\ell,4} \cdot n^{\ell+3} \cdot p^{2\ell+7} 
			& \le p \cdot \sum_{\ell \ge 1} 2^{1100} \cdot \ell^{600} \cdot \left(200 np^2\right)^{\ell+3} \\
			& \le n^{-1/2} \cdot \sum_{\ell\geq 1} 2^{1100} \cdot \ell^{600} \cdot \left(\frac{200}{400}\right)^{\ell+3} = o(1).
		\end{align*}
		As before, we conclude that, w.h.p., $G(n,p)$ has no $4$-minimal triangle sequences, showing that there are no triangle sequences $\vecT$ with $r(\vecT) \ge 4$, which in turn shows that there are no subgraphs $T$ which are triangle-connected and satisfy $e(T) > 2v(T)$ by \ref{itm:r}.
	\end{proof}

    \subsection{Proof of colouring statements}
    Armed with Lemma~\ref{lem:reduction-to-colouring-triangle}, it suffices to prove the following two colouring statements to conclude the respective 0-statements of Theorem~\ref{thm:constrained} (for $H = K_3$) and Theorem~\ref{thm:constrained_K3}. 

	\begin{lemma} \label{lem:k>3_triangle_colouring}
		For any triangle-connected graph $T$ with $e(T)\le 2v(T)$ it holds that $T \notconstrained (K_{1,4},K_3)$.
	\end{lemma}

    \begin{lemma}\label{lem:k=3_triangle_colouring}
		For any triangle-connected graph $T$ with $e(T)< 2v(T)$ it holds that $T \notconstrained (K_{1,3},K_3)$.
	\end{lemma}
    
	\begin{proof}[Proof of Lemma~\ref{lem:k>3_triangle_colouring}]

		Let $B$ be a triangle-connected graph with $e(B) \le 2v(B)$ and let $\vecT = (T_0, \ldots, T_{\ell})$ be a triangle sequence such that $T_{\ell} = B$. Write $\rho = r(\vecT)$. Then $\rho \le 3$, as $e(B) \leq 2v(B)$, $e(B) = 3 + 2\ell + \rho$ and $v(B) = \ell + 3$.
		Assume that $\vecT$ is chosen so that the first irregular step (if one exists) appears as early as possible, and similarly that each subsequent irregular step (if any exist) appears as early as possible given this. 
        We now construct a sequence $\phi_0, \ldots, \phi_{\ell}$ of partial colourings of $T_0, \ldots, T_{\ell}$, such that:
        $\phi_i$ extends $\phi_{i-1}$ for $i \in [\ell]$ and uses the colour set $[0,i]$; every triangle in $T_i$ has at least two edges coloured with the same colour in $\phi_i$; and there is no monochromatic $K_{1,4}$.
		Note that this suffices to prove the lemma, as we may finish by colouring each edge in $B$ that is left uncoloured by $\phi_{\ell}$ with a unique new colour.

		\begin{itemize}
			\item
				For $i = 0$, the graph $T_0$ is a triangle. Colour two edges of $T_0$ by $0$, and leave the remaining edge uncoloured.
			\item
				For $i \geq 1$, let $U$ be the set of vertices $u'$ in $T_{i-1}$ such that there exists an edge $e \in E(T_{i-1})$ containing $u'$ and forming a triangle with $v_i$ (so $\delta(T_{i-1}[U]) \ge 1$). 
                \begin{itemize} 
                        \item
                        If  $|U| \le 3$ and there are at most three edges from $v_i$ to $T_{i-1}$ then colour these edges with colour $i$. If $|U| \le 3$ and $d(v_i, T_{i-1}) \ge 4$, colour all edges from $v_i$ to $U$ by colour $i$ (leaving the other edges from $v_i$ to $T_{i-1}$ uncoloured).

                        \item If $|U| \ge 4$, define $u$ to be the last vertex in $U$ added to the sequence $\vecT$.

						We claim that $|U| = 4$. Indeed, otherwise $|U| = 5$ (as $\rho \le 3$), and this is the first (and only) irregular step.
						Then $u$ was added in a regular step, so it sends at most two edges to $U$.
						This and $\delta(T_{i-1}[U]) \ge 1$ imply that $U \setminus \{u\}$ spans an edge. 
                        But then we could have added $v_i$ to the graph right before adding $u$ (and joining $v_i$ to $U \setminus \{u\}$), thus having the first irregular step appear earlier, a contradiction.

						It remains to consider the case $|U| = 4$.
						We claim that $u$ was added in an irregular step. 
                        Indeed, notice that by the assumption on the sequence, we may assume that $v_i$ was added right after $u$ (the only other possibility is that there was another irregular step right between $u$ and $v_i$, but then we can swap the order of these two irregular steps).
						Assuming $u$ was added in a regular step, $u$ sends at most two edges to $U$. 
                        This shows that $U \setminus \{u\}$ spans an edge, and thus $v_i$ could have been added before $u$ in an irregular step, a contradiction.

						So this step and the step $j$ in which $u$ was added are the only irregular steps, and $u$ was added along with exactly three edges, which are coloured $j$, and no other edges are coloured $j$.
						Colour the edges between $v_i$ and $U \setminus \{u\}$ by $j$.
						Then the colour class of $j$ has maximum degree at most $3$, and every triangle touching $v_i$ has two edges coloured $j$.
						\qedhere
				\end{itemize}
		\end{itemize}
	\end{proof}

    We now prove Lemma~\ref{lem:k=3_triangle_colouring}. The proof is similar to \Cref{lem:k>3_triangle_colouring} but more involved.
    
	\begin{proof}[Proof of Lemma~\ref{lem:k=3_triangle_colouring}]
        Let $B$ be a triangle-connected graph with $e(B) < 2v(B)$. Here we use a variant of a triangle sequence, where we allow $T_0$ to be either a triangle, a $K_4$, or a $K_5^-$ (a $K_5$ minus one edge). 

		Let $\vecT = (T_0, \ldots, T_{\ell})$ be such a variant of a triangle sequence with $T_{\ell} = B$ with the following properties, where $\rho = r(\vecT)$ is defined as in \eqref{eqn:rH}. 
		\begin{itemize}
			\item
				If possible, $T_0$ is a $K_5^-$. In this case we have $\rho = 0$, i.e.\ there are no irregular steps.
			\item
				Otherwise, if possible, $T_0$ is a $K_4$. In this case we have $\rho \le 1$, so there is at most one irregular step $i$, and then $v_{i}$ has exactly three neighbours in $T_{i-1}$.
			\item
				Otherwise, $B$ is $K_4$-free. If possible, avoid having irregular steps $i$ with $v_i$ having four neighbours in $T_{i-1}$. In this case $\rho \le 2$, so there are at most two irregular steps.
		\end{itemize}

		We now define, for each $i \in [0,\ell]$, an edge-colouring $\phi_{i}$ of $T_i$, using colours from $[0,i] \times [0,2]$ such that: 
		the colourings $\phi_i$ are nested (i.e.\ $\phi_{i+1}$ extends $\phi_i$ for every $i \in [0,\ell-1]$), and there are no rainbow triangles or monochromatic copies of $K_{1,3}$.

		\begin{itemize}
			\item
				Suppose that $i = 0$. 
				\begin{itemize}
					\item
						If $T_0$ is a triangle, colour two of its edges $(0,0)$ and the third $(0,1)$.
					\item
						If $T_0$ is a $K_4$, colour a $4$-cycle in it $(0,0)$, colour one of the two other edges $(0,1)$, and colour the final edge $(0,2)$. 
					\item
						If $T_0$ is a $K_5^-$, colour a $5$-cycle in it $(0,0)$ and colour all other edges $(0,1)$.
				\end{itemize}
			\item
				Now suppose $i \ge 1$.
				\begin{itemize}
					\item
						If $i$ is a regular step, colour the two edges from $v_i$ to $T_{i-1}$ by $(i,0)$ (and colour $T_{i-1}$ according to $\phi_{i-1}$).
					\item
						If $i$ is an irregular step with $\deg(v_i, T_{i-1}) = 3$, denote its three neighbours in $T_{i-1}$ by $u_1, u_2, u_3$, and proceed according to the following cases.
						\begin{itemize}
							\item
								If $\{u_1,u_2,u_3\}$ spans a single edge, then without loss of generality $u_3$ is isolated. Colour $v_i u_1$ and $v_i u_2$ by $(i,0)$ and colour $v_i u_3$ by $(i,1)$.
							\item
								If $\{u_1,u_2,u_3\}$ forms a triangle, then without loss of generality $u_1 u_2$ and $u_2 u_3$ are coloured by the same colour $c_1$ (as there are no rainbow triangles). 

								We claim that there are no other edges coloured $c_1$ in $\phi_{i-1}$.
								Indeed, the set $\{v_i, u_1, u_2, u_3\}$ forms a $K_4$, so $T_0$ is not a triangle.
                                Further, since $i$ is an irregular step, $T_0$ is not a copy of $K_5^-$. Thus $T_0$ is a copy of $K_4$, and this is the first and only irregular step.
								This shows, by the procedures for $T_0 = K_4$ and $i = 0$, and for regular steps, that all colour classes, except for $(0,0)$'s colour class, are paths of length at most $2$.
								If $c_1 = (0,0)$, then $u_1, u_2, u_3 \in V(T_0)$, but then $V(T_0) \cup \{v_i\}$ induces a $K_5^-$, a contradiction.
								Thus $c_1 \neq (0,0)$, so $c_1$'s colour class must be a path of length at most $2$, and indeed there are no edges coloured $c_1$ other than $u_1 u_2$ and $u_2 u_3$. 

								Colour $v_i u_1$ and $v_i u_3$ by $c_1$, and colour $v_i u_2$ by $(i, 0)$.
							\item
								Suppose that $u_1 u_2 u_3$ is a path coloured $c_1$, $u_1 u_3$ is a non-edge, and this is the first irregular step.

								We claim that there are no other edges coloured $c_1$.
								Indeed, $T_0$ is either a triangle or a $K_4$ (since $i$ is an irregular step), and if there is a colour with more than two edges then $T_0$ is a $K_4$, this colour is $(0,0)$, and vertices touching $(0,0)$ form a clique, so $c_1 \neq (0,0)$.

								Colour $v_i u_1$ by $c_1$, and colour $v_i u_2$ and $v_i u_3$ by $(i,0)$.
							\item
								Suppose that $u_1 u_2$ and $u_2 u_3$ are edges, coloured by distinct colours $c_1$ and $c_2$, $u_1 u_3$ is a non-edge, and this is the first irregular step.

								Without loss of generality, the edges coloured $c_1$ form a path of length at most $2$ (because so far we have at most one colour class which is not a path of length at most $2$). 
                                Thus for some $j \in [2]$ the vertex $u_j$ is not incident to other edges coloured $c_1$. 
								If $j = 1$ then colour $v_i u_1$ by $c_1$ and colour $v_i u_2$ and $v_i u_3$ by $(i,0)$.
								If instead $j = 2$ then colour $v_i u_1$ by $(i,0)$ and colour $v_i u_2$ and $v_i u_3$ by $c_1$. (See Figure~\ref{fig:4thsubcase}.)
                                \begin{figure}[!ht]
                                \begin{center}
                                \definecolor{qqwuqq}{rgb}{0,0.39215686274509803,0}
                                \definecolor{qqqqff}{rgb}{0,0,1}
                                \definecolor{ccqqqq}{rgb}{0.8,0,0}
                                \begin{tikzpicture}
                                \draw [line width=1pt,color=ccqqqq] (5,5)-- (3,3);
                                \draw [line width=1pt,color=qqqqff] (5,5)-- (7,3);
                                \draw [line width=1pt,color=qqwuqq] (5,5)-- (5,7);
                                \draw [line width=1pt,color=qqwuqq] (5,7)-- (7,3);
                                \draw [line width=1pt,color=ccqqqq] (5,7)-- (3,3);
                                \draw [line width=1pt,dashed=2pt,color=ccqqqq] (3,3)-- (5,2);
                                \draw [line width=1pt,color=ccqqqq] (13,7)-- (13,5);
                                \draw [line width=1pt,color=qqqqff] (13,5)-- (15,3);
                                \draw [line width=1pt,color=ccqqqq] (13,5)-- (11,3);
                                \draw [line width=1pt,color=qqwuqq] (11,3)-- (13,7);
                                \draw [line width=1pt,color=ccqqqq] (13,7)-- (15,3);
                                \draw [line width=1pt,dashed=2pt,color=ccqqqq] (13,5)-- (13,2);
                                \begin{scriptsize}
                                \draw [fill=black] (5,5) circle (1.5pt);
                                \draw [fill=black] (3,3) circle (1.5pt);
                                \draw [fill=black] (7,3) circle (1.5pt);
                                \draw [fill=black] (5,7) circle (1.5pt);
                                \draw [fill=black] (5,2) circle (1.5pt);
                                \draw [fill=black] (13,7) circle (1.5pt);
                                \draw [fill=black] (13,5) circle (1.5pt);
                                \draw [fill=black] (15,3) circle (1.5pt);
                                \draw [fill=black] (11,3) circle (1.5pt);
                                \draw [fill=black] (13,2) circle (1.5pt);
                                \end{scriptsize}
                                \draw (3,3) node[anchor=north] {\Large $u_1$};
                                \draw (5,5.2) node[anchor=west] {\Large $u_2$};
                                \draw (7,3) node[anchor=north] {\Large $u_3$};
                                \draw (5,7) node[anchor=south] {\Large $v_i$};
                                \draw (11.2,3) node[anchor=north] {\Large $u_1$};
                                \draw (13,5.2) node[anchor=west] {\Large $u_2$};
                                \draw (15,3) node[anchor=north] {\Large $u_3$};
                                \draw (13,7) node[anchor=south] {\Large $v_i$};
                                \end{tikzpicture}
								\caption{Colourings for $j=1$ (left) and $j=2$ (right) in the 4th subcase when $i\geq 1$ and $i$ is an irregular step with $\deg(v_i, T_{i-1}) = 3$. Here $\color{ccqqqq}{c_1 = \mathrm{red}}$, $\color{qqqqff}{c_2 = \mathrm{blue}}$ and $\color{qqwuqq}{(i,0) = \mathrm{green}}$.}\label{fig:4thsubcase}
                                \end{center}
                                \end{figure}
							\item
								Next, suppose that $u_1 u_2 u_3$ is a path coloured $c_1$, $u_1 u_3$ is a non-edge, and this is the second irregular step.

								We claim that for some $j \in \{1,3\}$, the vertex $u_j$ is not incident with other edges coloured $c_1$.
								Indeed, notice that in this case $T_0$ is a triangle and, from cases already considered, all colour classes are paths of length at most $4$. 
                                This means that if $u_1$ and $u_3$ are both incident to other edges coloured $c_1$ then there are vertices $u_0, u_4$ such that $u_0 \ldots u_4$ is a path of length $4$ coloured $c_1$, but then $u_1 u_3$ is an edge, a contradiction. Indeed, the only case where a path of length $4$ is formed is in the previous case, and then the second and fourth vertices are adjacent\footnote{These are the vertices $v_i$ and $u_1$ in the $j=2$ subcase of the previous case (see the right image of Figure~\ref{fig:4thsubcase}).}.
								
								Without loss of generality, $j = 1$. Colour $v_i u_1$ by $c_1$, and colour $v_i u_2$ and $v_i u_3$ by $(i,0)$.
							\item
								Finally, we may assume that $u_1 u_2$ and $u_2 u_3$ are edges coloured by distinct colours $c_1$ and $c_2$, $u_1 u_3$ is a non-edge, and this is the second irregular step.

								Again, we may assume that the edges coloured $c_1$ form a path of length at most $2$ (in this case $T_0$ is a triangle, so all colour classes, except for at most one\footnote{This would be the colour $c_1$ from either the 3rd or 4th subcase.}, are paths of length at most $2$). 

								As before, if $u_1$ is not incident with other edges coloured $c_1$, then colour $v_i u_1$ by $c_1$ and $v_i u_2$ and $v_i u_3$ by $(i,0)$.
								Otherwise, colour $v_i u_1$ by $(i,0)$ and colour $v_i u_2$ and $v_i u_3$ by $c_1$.
						\end{itemize}
					\item
						Now suppose $i$ is an irregular step with $\deg(v_i, T_{i-1}) = 4$. In particular, this is the first and only irregular step and $T_0$ is a triangle.
						Let $u_1, u_2, u_3, u_4$ be the neighbours of $v_i$, chosen so that $u_4$ is the last vertex to be added to the sequence.

						Then $\{u_1, u_2, u_3\}$ is an independent set, because otherwise we could have added $v_i$ to the sequence before $u_4$, thereby avoiding having irregular steps during which four edges are added. 
                        Note that $u_4$ has a neighbour in $\{u_1, u_2, u_3\}$, otherwise this is not a legal step of a triangle sequence. 
                        Assume without loss of generality $u_3u_4$ is an edge. 
                        Then $u_4u_1$ and $u_4u_2$ are non-edges. Indeed, suppose to the contrary that $u_j$ is a neighbour of $u_4$ for $j\in \{1,2\}$. 
                        This implies that $u_j u_3$ is an edge, because $u_4$ was added in a regular step, a contradiction.

						Colour $v_i u_1$ and $v_i u_2$ by $(i,0)$ and $v_i u_3$ and $v_i u_4$ by $(i,1)$.
						\qedhere
				\end{itemize}
		\end{itemize}
	\end{proof}

\subsection{Tightness when $k=3$ for \Cref{lem:k=3_triangle_colouring}}

	\Cref{lem:k=3_triangle_colouring} shows that if $G$ is a graph satisfying $m(G) < 2$ then $G \notconstrained (K_{1,3}, K_3)$.
	This bound is tight, as the following lemma shows, noting that $m(G) = 2 = m_2(K_3)$ for the graph $G$ shown in Figure~\ref{fig:tight_triangle}.

	\begin{lemma}\label{lem:tight_triangle}
		Let $G$ be a copy of $K_6$ with three edges forming a triangle removed. Then every edge--colouring of $G$ contains either a monochromatic $K_{1,3}$ or a rainbow triangle. 
	\end{lemma}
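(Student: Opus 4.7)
The plan is to argue by contradiction: suppose $G$ admits an edge-colouring $\varphi$ with no monochromatic $K_{1,3}$ and no rainbow triangle. The relevant structural observation is that $G$ is the join of an independent set $X = \{x_1, x_2, x_3\}$ (the vertices of the removed triangle) with a triangle $Y = \{y_1, y_2, y_3\}$; in particular every triple $\{x_i, y_j, y_k\}$ is a triangle of $G$, and so is $\{y_1,y_2,y_3\}$. Throughout I would use the equivalent reformulation that the ``no monochromatic $K_{1,3}$'' hypothesis means every colour class has maximum degree at most $2$.

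I would first analyse the triangle on $Y$. Non-rainbowness produces two equal-coloured edges sharing a common vertex, so after relabelling $\varphi(y_1 y_2) = \varphi(y_1 y_3) = 1$, and the degree-$2$ constraint at $y_1$ then forbids the colour $1$ on every cross-edge $x_i y_1$. I would split on $\varphi(y_2 y_3)$. In the degenerate subcase $\varphi(y_2 y_3) = 1$, the same forbidden-colour conclusion applies at $y_2$ and $y_3$, and pairing this with the non-rainbow condition on the two triangles $x_i y_1 y_j$ ($j \in \{2,3\}$) forces all three cross-edges at each $x_i$ to share a common colour — a monochromatic $K_{1,3}$ at $x_i$, contradiction.

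The main subcase is $\varphi(y_2 y_3) = 2 \neq 1$. The key step is a pigeonhole at $y_2$: colours $1$ and $2$ can each appear on at most one more edge at $y_2$, so some cross-edge, say $x_1 y_2$, must carry a third colour $3 \notin \{1,2\}$. The non-rainbow triangle $x_1 y_1 y_2$ together with $\varphi(x_1 y_1) \neq 1$ then forces $\varphi(x_1 y_1) = 3$, and the non-rainbow triangle $x_1 y_1 y_3$ gives $\varphi(x_1 y_3) \in \{1,3\}$; the value $3$ is excluded by the $K_{1,3}$-constraint at $x_1$, so $\varphi(x_1 y_3) = 1$. But now the triangle $x_1 y_2 y_3$ carries the three distinct colours $3, 1, 2$, a rainbow triangle, contradicting the assumption.

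I do not anticipate any real obstacle beyond organising this case-split cleanly; the heart of the argument is the pigeonhole step producing a ``third'' colour on some $x_i y_2$, after which every subsequent move is pinned down by a single local constraint — either the non-rainbow condition on a triangle incident to $x_1$, or the degree-$2$ bound on a colour class at $x_1$.
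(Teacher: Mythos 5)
Your proof is correct, and it takes a genuinely cleaner route than the paper's. The paper also argues by contradiction, but it works directly with the labelled vertices $1,\dots,6$ (where $\{1,3,5\}$ is the independent triple and $\{2,4,6\}$ is the remaining triangle) and performs a long linear case-chase: after fixing two equal-coloured edges on the triangle $246$, it colours edges one by one ($36$, $23$, $34$, $25$, $56$, $45$, $24$, $34$ again, then $12,14,16$) with each step pinned by either the non-rainbow or the max-degree-2 constraint, until a contradiction appears at vertex $1$. Your argument instead names the structural fact explicitly --- $G$ is the join of an independent set $X$ with a triangle $Y$ --- and then resolves everything with a single pigeonhole at one vertex of $Y$, which manufactures a ``fresh'' colour on a cross-edge; after that, two applications of the non-rainbow condition on triangles through $x_1$ and one application of the $K_{1,3}$ bound at $x_1$ force a rainbow triangle. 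The degenerate subcase ($Y$ monochromatic) is also handled in one line. I checked each step: the pigeonhole at $y_2$ is valid because the degree-$2$ bound allows at most one further edge of each of colours $1$ and $2$ at $y_2$ against three cross-edges, and the subsequent deductions ($\varphi(x_1y_1)=3$, then $\varphi(x_1y_3)=1$, then $x_1y_2y_3$ rainbow) are all forced. Compared to the paper's proof, yours is shorter, needs no figure, and exhibits the mechanism (join structure plus pigeonhole) rather than hiding it inside a case enumeration; the paper's version has the minor advantage of being entirely mechanical and easy to verify line by line without any global picture. Both are fully correct.
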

	\begin{proof}
		Label the vertices of $G$ with $1,\ldots,6$ such that $13$, $35$ and $15$ are non-edges, and all other pairs of vertices are edges, as shown in Figure \ref{fig:tight_triangle}.
		\begin{figure}[!ht]
        \centering
        \includegraphics{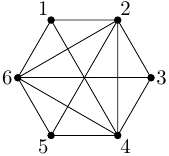}
        \caption{The graph $G$ in the statement of Lemma~\ref{lem:tight_triangle}}
        \label{fig:tight_triangle}
    \end{figure}
    
	Suppose for a contradiction that there exists a colouring of $G$ containing no monochromatic $K_{1,3}$ and no rainbow triangle. 
    Two of the edges of the triangle $246$ must be assigned the same colour. 
    By symmetry we can say without loss of generality that $46$ and $26$ are the same colour, say red. 

	As there is no monochromatic $K_{1,3}$, the edge $36$ must receive a different colour, say blue. 
    Triangles $346$ and $236$ are not rainbow so edges $23$ and $34$ must each be either red or blue. 
    They cannot both be blue else there would be a blue $K_{1,3}$ centred on vertex $3$, so at most one is blue. 
    By symmetry we can say without loss of generality that $23$ is red. 

	Consider now $25$ and $56$. 
    These are both adjacent to two red edges at $2$ and $6$ respectively, so cannot be red. 
    They must therefore be the same colour $c$, else triangle  $256$ is rainbow. Note that $c$ may be blue.

	Edge $45$ cannot be colour $c$, else there would be a monochromatic $K_{1,3}$ centred on vertex $5$. 
    Therefore $45$ must be red, else triangle $456$ would be rainbow. 

	Edge $24$ cannot be red, else there would be a red $K_{1,3}$ centred on vertex $2$ (or $4$). 
    Therefore $24$ must be colour $c$, else triangle $245$ would be rainbow. 

	Edge $34$ cannot be red, else there would be a red $K_{1,3}$ centred on vertex $4$. 
    Therefore $34$ must be colour $c$, else triangle $234$ would be rainbow. 

	Note that colour $c$ must be equal to blue, else  triangle $346$ is rainbow. 

	To conclude, observe that none of edges $12$, $14$ or $16$ may be red or blue without creating a monochromatic $K_{1,3}$.
    Triangle $124$ is not rainbow, so edges $12$ and $14$ are the same colour. 
    Moreover, triangle $146$ is not rainbow, so edges $14$ and $16$ are the same colour. 
    But then we have a monochromatic $K_{1,3}$ centred at $1$, a contradiction.
\end{proof}

\subsection{Proof of \Cref{thm:constrained,thm:constrained_K3}}

	We now combine the results from throughout this section to prove \Cref{thm:constrained,thm:constrained_K3}.

	\begin{proof}[Proof of \Cref{thm:constrained}]
		The 1-statement follows from Theorem~\ref{thm:1-statement}.

		First we consider the $0$-statement when $k \ge 4$, $m_2(H)=2$ and the only strictly $2$-balanced graph with 2-density $2$ contained in $H$ is $K_3$. Let $p \le n^{-1/2}/20$ (so $p \le cn^{-1/m_2(H)}$ for $c = 1/20$).
		We wish to show that, w.h.p., $G = G(n,p)$ can be edge-coloured so that there are no rainbow triangles or monochromatic $K_{1,4}$'s (this would then imply the statement for all $k \ge 4$, noting that if there are no rainbow triangles then there are no rainbow copies of $H$).
		In fact, it suffices to prove this for every triangle-connected component\footnote{Note that these `components' are not necessarily vertex disjoint but are edge disjoint.} of $G$ (namely a maximal triangle-connected subgraph of $G$), 
		since we can use disjoint sets of colours for different components and colour each edge not contained in any triangle with its own unique colour.
		By \Cref{lem:reduction-to-colouring-triangle} we may assume that every such component $H$ satisfies $e(H) \le 2v(H)$, and by \Cref{lem:k>3_triangle_colouring}, every such $H$ can be coloured appropriately.
		This proves the $0$-statement.

		For the $0$-statement when either $m_2(H) \ne 2$ or $m_2(H) =2$ and $H$ contains a strictly 2-balanced graph $J \ne K_3$ with $m_2(J) = 2$, note that it suffices to consider the case where $H$ is strictly 2-balanced. If not, take a minimal subgraph $H' \subseteq H$ which is not a triangle such that $m_2(H) = d_2(H')$, so that $H'$ is strictly 2-balanced. (Note that there exists such an $H'$ which is not a triangle by the conditions on $H$.)
        If a colouring of $G(n,p)$ contains no rainbow $H'$ then it also contains no rainbow $H$.
        Also note that it suffices to take $k = 3$ here, because avoiding a monochromatic $K_{1,3}$ also avoids a monochromatic $K_{1,k}$ for every $k \ge 3$. 
        Since $m_2(H) > 1$ is equivalent to $H$ not being a forest, \Cref{lem:notk3} immediately implies the required $0$-statement, noting that we may apply \Cref{lem:notk3} since $H$ is not a triangle.
	\end{proof}
 
For proving the final part of Theorem~\ref{thm:constrained_K3}, we will make use of the following fact. 
\begin{claim}\label{claim:subgraphappearance}
Let $F$ be a balanced graph. For any constant $c>0$ there exists $\zeta= \zeta(c) > 0$ such that if $p = cn^{-1/m(F)}$ then $\lim\limits_{n \to \infty}\mathbb{P}[F \subseteq G(n,p)] \ge \zeta$.
\end{claim}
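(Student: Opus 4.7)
The plan is to apply the second moment method (Paley--Zygmund inequality) to the random variable $X$ counting the number of (labelled) copies of $F$ in $G = G(n,p)$, with $p = cn^{-1/m(F)}$. Since $F$ is balanced, $m(F) = e(F)/v(F)$, and so
\[
\Ex[X] = \Theta\!\left( n^{v(F)} p^{e(F)} \right) = \Theta\!\left( c^{e(F)} n^{v(F) - e(F)/m(F)} \right) = \Theta\!\left( c^{e(F)} \right),
\]
which is a positive constant depending only on $c$ and $F$.

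For the second moment, I would expand
\[
\Ex[X^2] = \sum_{F_1, F_2} \Pr[F_1, F_2 \subseteq G],
\]
where the sum is over pairs of labelled copies of $F$ in $K_n$, and group the pairs according to the isomorphism type of $J := F_1 \cap F_2$. For each such subgraph $J \subseteq F$, the number of pairs with intersection isomorphic to $J$ is $O(n^{2v(F) - v(J)})$, and the probability that both copies are present is $p^{2e(F) - e(J)}$. Thus
\[
\Ex[X^2] = \sum_{J \subseteq F} O\!\left( n^{2v(F) - v(J)} p^{2e(F) - e(J)} \right) = O\!\left( (\Ex[X])^2 \right) \cdot \sum_{J \subseteq F} O\!\left( n^{-v(J)} p^{-e(J)} \right).
\]
For each $J \subseteq F$ with $v(J) \geq 1$, the balanced condition gives $e(J)/v(J) \leq d(J) \leq m(F)$, so $p^{-e(J)} = c^{-e(J)} n^{e(J)/m(F)} \leq c^{-e(J)} n^{v(J)}$; the term for $J$ with $v(J) = 0$ is trivially $O(1)$. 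Since $F$ has only finitely many subgraphs, the sum over $J$ is bounded by a constant $C_0 = C_0(c, F)$, and hence $\Ex[X^2] \leq C_0 (\Ex[X])^2$.

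Applying the Paley--Zygmund inequality now yields
\[
\Pr[X \geq 1] \geq \frac{(\Ex[X])^2}{\Ex[X^2]} \geq \frac{1}{C_0} =: \zeta,
\]
which is a positive constant depending only on $c$ and $F$, proving the claim. No step here is a real obstacle; the only point to handle carefully is the bound $p^{-e(J)} \leq c^{-e(J)} n^{v(J)}$, which is precisely where the balanced hypothesis on $F$ is used.
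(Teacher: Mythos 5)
Your proof is correct, and it takes a genuinely different route from the paper. The paper applies Janson's inequality (Theorem~8.1.1 in Alon--Spencer): it sets $\mu = \Theta(c^{e(F)})$, shows the overlap term $\Delta$ is $O(\mu c)$, restricts attention to small $c$ by monotonicity so that $\Delta < \mu$, and concludes $\Pr[F \subseteq G(n,p)] \ge 1 - e^{-\mu + \Delta/2} = 1 - e^{-\Theta(c^{e(F)})}$. You instead use the plain second moment method: the key computation $\Ex[X^2] \le C_0(c,F)\,(\Ex X)^2$ is essentially the same overlap analysis (grouping pairs of copies by the isomorphism type of their intersection $J$ and using $d(J) \le m(F)$ because $F$ is balanced), but you feed it into Paley--Zygmund rather than Janson. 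Your argument is slightly more elementary — it needs only Cauchy--Schwarz rather than Janson's exponential correlation inequality — and it also avoids the paper's detour through monotonicity to reduce to small $c$, since you allow the constant $C_0$ to depend on $c$ directly. The trade-off is that Janson gives the stronger quantitative bound $1 - e^{-\Theta(c^{e(F)})}$ (which tends to $1$ as $c$ grows), whereas the second moment method gives the weaker $\zeta = 1/C_0$; for the purpose of this claim, either bound suffices. One small point worth stating explicitly in your write-up: when you group pairs by the intersection type $J$, the subgraph $J$ should be taken to include all shared vertices (not just those incident to shared edges), so that the count of pairs is indeed $O(n^{2v(F) - v(J)})$; with that reading, every step is sound.
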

While the proof of this claim is an elementary exercise in studying random graphs, we include its proof for completeness. We shall use Janson's inequality.
	\begin{theorem}[Janson's inequality {\cite[Theorem~8.1.1]{as}}]\label{thm:janson}
		Let $\Omega$ be a finite set, let $X$ be a random subset of $\Omega$ obtained by including each element $v$ in $\Omega$ with some probability $p_v$, independently.
		Let $\{A_i\}_{i \in I}$ be a collection of subsets of $\Omega$.
		Define $\mu$ and $\Delta$ as follows.
		\begin{align*}
			\mu & := \sum_{i \in I} \Pr\left(A_i \subseteq X\right), \\[.1em]
			\Delta & := \sum_{i \neq j \text{ and } A_i \cap A_j \neq \emptyset} \Pr\left(A_i \cup A_j \subseteq X\right).
		\end{align*}
		Then
		\begin{equation*}
			\Pr\left(A_i \not\subseteq X \text{ for every $i \in I$}\right) \le e^{-\mu + \Delta/2}.
		\end{equation*}
	\end{theorem}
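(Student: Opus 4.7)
The plan is to apply Janson's inequality (\Cref{thm:janson}) with $\Omega = E(K_n)$, $X = E(G(n,p))$, and $\{A_i\}_{i \in I}$ indexing the edge sets of the distinct (unlabelled) copies of $F$ in $K_n$. Then $\{F \not\subseteq G(n,p)\} = \bigcap_{i \in I}\{A_i \not\subseteq X\}$, so Janson yields $\Pr[F \not\subseteq G(n,p)] \le e^{-\mu + \Delta/2}$, and the goal is to show that this bound is strictly below $1$ in the limit for at least some $c_0 \le c$; the general case will follow by monotonicity of $\{F \subseteq G(n,p)\}$ in $p$.

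Since $F$ is balanced, $v(F) = e(F)/m(F)$, so with $p = c n^{-1/m(F)}$
\[
	\mu = \frac{(n)_{v(F)}}{|\mathrm{Aut}(F)|}\,p^{e(F)} \xrightarrow{n \to \infty} \mu_0(c) \coloneqq \frac{c^{e(F)}}{|\mathrm{Aut}(F)|} > 0.
\]
To control $\Delta$, I partition the ordered pairs $(i,j)$ with $i \ne j$ and $A_i \cap A_j \ne \emptyset$ according to the isomorphism type of the subgraph $J$ of $F$ corresponding to $A_i \cap A_j$. Since $A_i, A_j$ are distinct edge sets of copies of $F$, $J$ is a proper subgraph of $F$ with $v(J) \ge 2$ and $1 \le e(J) \le e(F) - 1$. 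Each such type contributes
\[
	\Theta\bigl(n^{2v(F) - v(J)}\,p^{2e(F) - e(J)}\bigr) = \Theta\bigl(c^{2e(F)-e(J)}\, n^{e(J)/m(F) - v(J)}\bigr),
\]
and because $J \subseteq F$ with $F$ balanced, $e(J)/v(J) \le m(F)$, so the exponent of $n$ is non-positive. Summing over the bounded number of isomorphism types gives $\Delta = \Delta(c) + o(1)$ for some finite $\Delta(c) \ge 0$. Moreover $e(J) \le e(F) - 1$ forces $2e(F) - e(J) \ge e(F) + 1$, so $\Delta(c)/\mu_0(c) = O(c) \to 0$ as $c \to 0^+$.

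Choose $c_0 \in (0, c]$ small enough that $\Delta(c_0) \le \mu_0(c_0)$. Janson then gives $\Pr[F \not\subseteq G(n, c_0 n^{-1/m(F)})] \le e^{-\mu_0(c_0)/2 + o(1)}$, so
\[
	\liminf_{n \to \infty} \Pr[F \subseteq G(n, c_0 n^{-1/m(F)})] \ge 1 - e^{-\mu_0(c_0)/2} =: \zeta > 0.
\]
A standard coupling of $G(n, c_0 n^{-1/m(F)})$ into $G(n, c n^{-1/m(F)})$ transfers this lower bound to the latter, completing the proof. The main obstacle is that when $F$ is balanced but not strictly balanced, $\Delta$ retains $\Theta(1)$ contributions coming from proper subgraphs of $F$ of the same density as $F$, so a direct application of Janson fails once $c$ is large; passing through a sufficiently small $c_0$ and invoking monotonicity is the device that sidesteps this difficulty.
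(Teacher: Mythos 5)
Your proposal is correct and takes essentially the same route as the paper's proof of Claim~\ref{claim:subgraphappearance}: apply Janson's inequality with $\Omega$ the edge set of $K_n$ and $\{A_i\}$ the copies of $F$, use balancedness of $F$ to get $\mu = \Theta(c^{e(F)})$ and $\Delta = O(\mu c)$, and invoke monotonicity in $p$ to reduce to small $c$. The only cosmetic difference is that you organize the $\Delta$-sum by isomorphism type of the intersection graph $J$, whereas the paper sums over the number $i$ of shared edges via the auxiliary function $f(i) \ge i/m(F)$; the resulting estimates are identical.
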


\begin{proof}[Proof of Claim~\ref{claim:subgraphappearance}]
	We aim to apply Janson's inequality (\Cref{thm:janson}), with $\Omega$ as the set of unordered pairs of elements in $[n]$, $p_v := p = cn^{-1/m(F)}$, $X$ as a copy of $G(n,p)$, and $\{A_i\}_{i \in I}$ the collection of copies of $F$ in the complete graph on $[n]$. Note that we may assume $c$ is sufficiently small as subgraph inclusion is a monotone property, i.e.\ for any $c' > c$, $\Pr\left(F \subseteq G(n,c'n^{-1/m(F)})\right) \geq \Pr\left(F \subseteq G(n,cn^{-1/m(F)})\right).$
	Defining $\mu$ and $\Delta$ as in the theorem, we have
	\begin{equation*}
		\mu = \Theta(n^{v(F)} p^{e(F)}) = \Theta(c^{e(F)})
	\end{equation*} 
	since $F$ is balanced.
	Let $f(i)$ be the minimum number of vertices in a strict subgraph of $F$ with $i$ edges.
	By definition of $m(F)$, we have $f(i)\geq i/m(F)$.
	Then for every copy $F'$ of $F$ in $K_n$, there are at most $O(n^{v(F)-f(i)})$ copies $F''$ of $F$ where $|E(F') \cap E(F'')|=i$.
	Therefore, denoting by $\cF$ the collection of all copies of $F$ in $K_n$,
	\begin{align*}
		\Delta 
		& = \sum_{F' \in \cF} \sum_{\substack{F'' \in \cF \setminus \{F'\} : \\ E(F') \cap E(F'') \neq \emptyset}} \Pr\left(F' \cup F'' \subseteq G(n,p)\right) \\
		& = \sum_{F' \in \cF} \Pr\left(F' \subseteq G(n,p)\right) \sum_{\substack{F'' \in \cF \setminus \{F'\} : \\ E(F') \cap E(F'') \neq \emptyset}} \Pr\left(F'' \setminus E(F') \subseteq G(n,p)\right) \\
		& \le \mu \sum_{i \in [e(F)-1]} O(n^{v(F)-f(i)} p^{e(F)-i}) \\
		& \le \mu \sum_{i \in [e(F)-1]} O(n^{v(F)-i/m(F)} p^{e(F)-i}) = O(\mu c) < \mu,
	\end{align*} 
	since $c$ is sufficiently small.
	
	 Applying \Cref{thm:janson}, we get
	\begin{equation*}
		\Pr\left(F \subseteq G(n,p)\right)
		\ge 1-e^{-\mu + \Delta/2}
		= 1-e^{-\mu/2}
		= 1 - e^{-\Theta(c^{e(F)})}. \qedhere
	\end{equation*}
\end{proof}

\begin{proof}[Proof of \Cref{thm:constrained_K3}]
	The 1-statement follows from Theorem~\ref{thm:1-statement}.

	For the $0$-statement, let $p = o(n^{-1/2})$ (notice that $m_2(H) = 2$ so this is the same as requiring $p = o(n^{-1/m_2(H)})$).
	We would like to show that, w.h.p., $G = G(n,p)$ can be edge-coloured without rainbow triangles and monochromatic copies of $K_{1,3}$. (Again, if there are no rainbow triangles then there are no rainbow copies of $H$.)
	By \Cref{lem:reduction-to-colouring-triangle}, we may assume that every triangle-connected subgraph $H$ of $G$ satisfies $e(H) < 2v(H)$.
	By using disjoint colour sets for different triangle-connected components, it suffices to show that every triangle-connected component can be coloured without rainbow triangles or monochromatic $K_{1,3}$'s, and this is indeed the case by \Cref{lem:k=3_triangle_colouring}.
	This completes the proof of the $0$-statement.

	For the final part of the theorem, we apply Claim~\ref{claim:subgraphappearance} with $F$ the graph defined in Lemma~\ref{lem:tight_triangle}, noting that $m(F)=m_2(K_3)=2$, $F \constrained (K_{1,3},K_3)$ and one can check that $F$ is balanced. 
 \qedhere
\end{proof}

\section{Concluding remarks}\label{sec:conclusion}

In this paper, we closed the gap on locating the threshold for the constrained Ramsey property for $(H_1,H_2)$ for all cases except for when $H_1=K_{1,2}$. 
For graphs $H_1$ and $H_2$, let $m_{\cram}(H_1,H_2):= \inf \{ m(G) : G \constrained (H_1,H_2) \}$. 
We present the full results in Table~\ref{tab:constrained}.
Note that in~\cite{c-ram_threshold}, all threshold functions obtained are coarse thresholds. 
In the table, we mark all locations where the coarse threshold could be improved to a semi-sharp threshold.
Note that we only consider graph $H_2$ with at least two edges, because if $H_2$ has one edge (or no edges) then every copy of $H_2$ is rainbow.

\begin{table}[h!]
\renewcommand{\arraystretch}{1.8}
\setstretch{.8}
\renewcommand{\footnotelayout}{\setstretch{1}}
\setlength{\footnotesep}{1.1\baselineskip}
\begin{minipage}{\textwidth} 
\begin{tabularx}{\textwidth}{@{}XXlllll@{}} \toprule
& & & \multicolumn{2}{c}{$1$-statement} & \multicolumn{2}{c}{$0$-statement}\\
\cmidrule(r){4-5} \cmidrule{6-7} 
 $H_1, \, e = e(H_1)$ & $H_2, \, e = e(H_2)$ & Threshold $n^{-1/f(H_1,H_2)}$ & Ref & Type & Ref & Type \\
 \midrule
  $K_{1,k}$, $k \geq 3$ & Not forest or in $\mathcal{H}^*$ \footnote{We write $\mathcal{H}^{*}$ for the class of graphs $H$ which satisfy $m_2(H)=2$ and the unique strictly $2$-balanced graph $J$ contained in $H$ with $m_2(J)=2$ is $K_3$.} & $m_2(H_2)$ & \ref{thm:1-statement}\cite{anti-ram_1-statement} & $\ge C$ & \ref{thm:constrained} & $\le c$ \\ 
 $K_{1,k}$, $k \geq 4$ & In $\mathcal{H}^*$ & $m_2(H_2) = 2$ &  \ref{thm:1-statement}\cite{anti-ram_1-statement} & $\ge C$ & \ref{thm:constrained} & $\le c$ \\ 
 $K_{1,3}$ & $K_3$ & $m_2(H_2) 
 = 2$ & \ref{thm:1-statement}\cite{anti-ram_1-statement} & $\ge C$ & \ref{thm:constrained_K3} & $\ll$ \\ 
  $K_{1,3}$ & In $\mathcal{H}^*$ but not $K_3$ & $m_2(H_2) 
 = 2$ & \ref{thm:1-statement}\cite{anti-ram_1-statement} & $\ge C$ & \ref{thm:constrained_K3} & $\ll$\footnote{Coarse ($\gg / \ll)$ could potentially 
 be improved to semi-sharp ($ \ge C / \le c$) here.\label{fn:coarseish}} \\ 
 \midrule
 Not star forest & Forest, ${e \geq 3}$ & $m_2(H_1)$ & \cite{c-ram_threshold} & $\gg$\footref{fn:coarseish} & \cite{rrlower}\footnote{Follows from the random Ramsey threshold.} & $\le c$\footnote{With the exception of when $H_1$ is a path of $3$ edges, since in this case the random Ramsey threshold for the $0$-statement is only coarse.} \\ 
 Star~forest, not star & Forest, not~short\footnote{A \emph{short} forest is defined to be one where all components have at most two edges, that is, a disjoint union of $K_2$s and $P_3$s.\label{fn:short}} & $m_2(H_2)= 1$ & \cite{c-ram_threshold} & $\gg$\footref{fn:coarseish} & \cite{c-ram_threshold} & $\ll$\footref{fn:coarseish}\footnote{If $H_1$ is a matching of size $2$, $H_2$ is a path of $3$ edges, $G$ a triangle with a path of length $2$ added to each vertex of the triangle, then we have $m(G)=1$ but $G \constrained (H_1,H_2)$ so for this example, the $0$-statement is coarse. Note that one could also take $G = C_5$ here to achieve the same conclusion.}\\ 
 Star~forest, not star & Short forest\footref{fn:short} & $m_{\cram}(H_1,H_2)$ & \cite{c-ram_threshold} & $\gg$\footref{fn:coarseish} & \cite{c-ram_threshold} & $\ll$ \\ 
   $K_{1,k}$, $k \geq 2$ & Forest, $e \geq 3$ & $m_{\cram}(H_1,H_2)$ & \cite{c-ram_threshold} & $\gg$\footref{fn:coarseish} & \cite{c-ram_threshold} & $\ll$ \\ 
 Any, $e \geq 2$ & $K_2 \sqcup K_2$ & $m_{c-ram}(H_1,H_2) = m(H_1)$ & \cite{c-ram_threshold} & $\gg$\footref{fn:coarseish} & \cite{c-ram_threshold} & $\ll$ \\ 
 Not forest, ${e \geq 2}$ & $P_3$ (`cherry') & $m_{\cram}(H_1,H_2) = m(H_1)$ & \cite{c-ram_threshold} & $\gg$\footref{fn:coarseish} & \cite{c-ram_threshold} & $\ll$ \\ 
 Forest, {$k$ non-isolated vertices} & $P_3$ (`cherry') & $m_{\cram}(H_1,H_2) = \frac{k-1}{k}$ & \cite{c-ram_threshold} & $\gg$\footref{fn:coarseish} & \cite{c-ram_threshold} & $\ll$ \\ 
\bottomrule
\end{tabularx}
\vspace{-.4cm}
\caption{Thresholds for constrained Ramsey}
\vspace{.2cm}
\label{tab:constrained}
\end{minipage}
\end{table}
For a graph property $\cP$, a function $f: \mathbb{N} \to \mathbb{R}$ is called a \emph{sharp threshold function}, if for all $\eps>0$, 
\[\lim\limits_{n \to \infty} \mathbb{P}[G(n,p) \in \cP] =   
\begin{cases}
    0       & \quad \text{if } p \leq (1-\eps) f(n),\\
    1       & \quad \text{if } p \geq (1+\eps) f(n).
\end{cases}\] 
We are not aware of any results on sharp thresholds for the constrained Ramsey property, 
and it would be interesting to determine which semi-sharp thresholds can be improved to sharp. 
Sharp thresholds occur for some cases of the Ramsey property for graphs.
Very recently, Friedgut, Kuperwasser, Samotij and Schacht~\cite{fkss} obtained a sharp threshold for the Ramsey property for $H$,
whenever $H$ is strictly $2$-balanced, not a forest, and so-called \emph{collapsible}.
Previously Friedgut and Krivelevich~\cite{fk} found a characterisation of when there is a sharp threshold for the Ramsey property for $H$ when $H$ is a tree.

 For the constrained Ramsey problem for $(K_{1,2},H_2)$, which as earlier discussed is equivalent to having the anti-Ramsey property for $H_2$,
see Table~\ref{tab:anti} for all known results. 
Note that we write $m_{\aram}(H_2):=m_{\cram}(K_{1,2},H_2)$.

\begin{table}[h!]
\renewcommand{\arraystretch}{1.8}
\setstretch{.8}
\renewcommand{\footnotelayout}{\setstretch{1}}
\setlength{\footnotesep}{1.1\baselineskip}
\begin{minipage}{\textwidth} \centering
\begin{tabularx}{\textwidth}{@{}>{\raggedright\arraybackslash}Xlllll@{}} \toprule
 & & \multicolumn{2}{c}{$1$-statement} & \multicolumn{2}{c}{$0$-statement}\\
\cmidrule(r){3-4} \cmidrule{5-6} 
  $H$ & Threshold $n^{-1/f(H)}$ & Ref & Type & Ref & Type \\
 \midrule 
 Any & $\le m_2(H)$ &  \ref{thm:1-statement}\cite{anti-ram_1-statement} & $\ge C$ & & \\ \midrule
 $K_k$, $k \ge 5$  & $m_2(H)=(k+1)/2$ & \ref{thm:1-statement}\cite{anti-ram_1-statement} & $\ge C$ & \cite{npss,anti-ram_complete} & $\ll$\footnote{Coarse ($\gg / \ll)$ could potentially 
 be improved to semi-sharp ($ \ge C / \le c$) here.\label{fn:coarseish2}}\footnote{$\le c$ proved for $k \geq 19$ in~\cite{npss}.}\\
  $C_k$, $k \ge 5$ & $m_2(H)=(k-1)/(k-2)$ & \ref{thm:1-statement}\cite{anti-ram_1-statement} & $\ge C$ & \cite{npss,anti-ram_cycles} & $\ll$\footref{fn:coarseish2}\footnote{$\le c$ proved for $k \geq 7$ in~\cite{npss}.}\\
   $K_3$  & $m_{\aram}(H)=1$  & \cite{bo} & $\gg$ & \cite{bo} & $\ll$\\
 $K_4$  & $m_{\aram}(H)=15/7$ & \cite{anti-ram_complete} & $\gg$\footref{fn:coarseish2}  & \cite{anti-ram_complete} & $\ll$ \\
  $K_4$ minus edge  & $m_{\aram}(H)=3/2$ & \cite{anti-ram_complete} & $\gg$\footref{fn:coarseish2}  & \cite{anti-ram_complete} & $\ll$ \\
  $C_4$ & $m_{\aram}(H)=4/3$ &  \cite{anti-ram_cycles} & $\gg$\footref{fn:coarseish2} & \cite{anti-ram_cycles} & $\ll$ \\
  Forest & $m_{\aram}(H)$ & \cite{c-ram_threshold} & $\gg$\footref{fn:coarseish2} & \cite{c-ram_threshold} & $\ll$ \\
   $B_t \oplus F$, $B_t$ book, ${1 < m_2(F) < 2}$ & $<2 \le m_2(H)$ & \cite{anti-ram_non-balanced}\footnote{The case $t=1$ was proved earlier in~\cite{kkm}.} & $\ge C$ & &\\
Strictly 2-balanced and ${1 < m_2(H) < \frac{\delta(H)(\delta(H) + 1)}{2\delta(H) + 1}}$ & $m_2(H)$ & \ref{thm:1-statement}\cite{anti-ram_1-statement} & $\ge C$ & \ref{thm:antiramsey_specialcase} & $\le c$ \\ 
$H$ with $m_2(H) \geq 19$ & $m_2(H)$ & \ref{thm:1-statement}\cite{anti-ram_1-statement} & $\ge C$ & \cite{kuperwasser} & $\le c$ \\
 \bottomrule
 \end{tabularx}
  \caption{Thresholds for anti-Ramsey 
  }
 \label{tab:anti}
  \end{minipage}
 \end{table}

Implicitly within our work in Section~\ref{sec:constrained}, we prove the following analogue of Theorem~\ref{thm:antiramsey0} for the constrained Ramsey property.

\begin{theorem}\label{thm:constrained0}
	Let $H$ be a strictly $2$-balanced graph with $m_2(H)> 1$ and let $k \ge 3$. To prove a semi-sharp $0$-statement for the constrained Ramsey property for $(K_{1,k},H)$ at $n^{-1/m_2(H)}$, it suffices to prove for all graphs $G$ with $m(G) \leq m_2(H)$ that $G \notconstrained (K_{1,k},H)$.  
	The same statement holds with `semi-sharp' replaced by `coarse' and `$m(G) \leq m_2(H)$' replaced by `$m(G) < m_2(H)$'.
\end{theorem}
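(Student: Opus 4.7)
The plan is to adapt the $H$-block framework underlying the proof of \Cref{thm:antiramsey0} to the constrained Ramsey setting, treating the cases $H \neq K_3$ and $H = K_3$ separately. In both cases the argument is essentially already present in \Cref{sec:constrained}; the proposition merely repackages the relevant decomposition steps into the form of a reduction to a colouring statement.

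For $H \neq K_3$ (so $e(H) \geq 4$), I would follow the proof of \Cref{lem:notk3} almost verbatim. Fix $c$ as in \Cref{cor:blocksaresparse} and run Algorithm~\textsc{Rainbow-colour-constrained} (\Cref{fig:constrainednotk3}) on $G \sim G(n,p)$ for $p \leq c n^{-1/m_2(H)}$. The residual graph $\hat{G}$ is $H$-closed, using $e(H) \geq 4$ together with the termination conditions of the two loops (exactly as argued in the proof of \Cref{lem:notk3}), so \Cref{lem:fclosedpartition} decomposes $\hat{G}$ into edge-disjoint $H$-blocks $B_1, \ldots, B_k$. By \Cref{cor:blocksaresparse}, w.h.p.\ $m(B_i) \leq m_2(H)$ for every $i$, with strict inequality if we instead assume $p = o(n^{-1/m_2(H)})$. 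The assumed colouring hypothesis colours each $B_i$ avoiding both rainbow copies of $H$ and monochromatic copies of $K_{1,k}$; using disjoint colour sets across blocks, together with the colours already assigned by the algorithm, produces the required colouring of $G$.

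For $H = K_3$ the above algorithm no longer yields a $K_3$-closed residual (since $K_3$ has only three edges), so instead I would mirror the proof of \Cref{thm:constrained_K3}. For $p \leq c n^{-1/2}$ with $c$ small, or $p = o(n^{-1/2})$ in the coarse case, decompose the edge set of $G \sim G(n,p)$ into its maximal triangle-connected subgraphs together with the edges belonging to no triangle. By \Cref{lem:reduction-to-colouring-triangle}, w.h.p.\ every triangle-connected subgraph $C$ of $G$ satisfies $e(C) \leq 2v(C)$ (respectively $<$), and a routine first-moment calculation (with $c$ chosen sufficiently small in the semi-sharp regime, and automatically in the coarse regime) shows that w.h.p.\ $m(G(n,p)) \leq m_2(K_3) = 2$ (respectively $<2$). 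Consequently every such component satisfies $m(C) \leq m_2(K_3)$, so the colouring hypothesis applies. Combining the component colourings using disjoint colour sets, along with a fresh colour for each edge lying in no triangle, yields the desired colouring of $G$.

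The main technical obstacle I anticipate is the density control in the $K_3$ case. At $p = \Theta(n^{-1/2})$, graphs of density exactly $2$ (such as $K_5$) appear with constant probability, so the constant $c$ must be tuned small enough to suppress all configurations of strictly greater density; this is achieved by a standard union bound stratified by the vertex count of the offending subgraph. Once this density bound is in hand, no genuinely new ideas are required: the proof is a clean assembly of the Rainbow-colour-constrained algorithm for $H \neq K_3$ and the triangle-connected decomposition for $H = K_3$, both already developed in \Cref{sec:constrained}.
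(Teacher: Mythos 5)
The $H \neq K_3$ branch of your proposal is essentially the paper's argument for \Cref{lem:notk3} and is sound: running \textsc{Rainbow-colour-constrained}, using $e(H) \geq 4$ to conclude that $\hat{G}$ is $H$-closed, decomposing by \Cref{lem:fclosedpartition}, invoking \Cref{cor:blocksaresparse} for the density bound, and replacing the specific colouring lemmas of the paper with the assumed colouring hypothesis all works.

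The $H = K_3$ branch has a genuine gap. You assert that ``a routine first-moment calculation shows that w.h.p.\ $m(G(n,p)) \leq m_2(K_3) = 2$,'' and later that a union bound stratified by vertex count suppresses all configurations of density strictly greater than $2$. Neither of these is correct. At $p = c\,n^{-1/2}$, the graph $G(n,p)$ itself has $e(G) = \Theta(n^{3/2})$ and $v(G) = n$, so $d(G) = \Theta(n^{1/2}) \to \infty$; in particular $m(G(n,p)) \to \infty$. Moreover, even restricting attention to bounded-size configurations, the naive first-moment sum over all graphs $F$ with $e(F) = 2v(F) + 1$ diverges in $v$ (the number of such graphs on $v$ vertices grows like $v^{\Theta(v)}$ while the per-graph probability decay is only exponential in $v$), so the ``standard union bound'' does not close regardless of $c$. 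What \Cref{lem:reduction-to-colouring-triangle} actually gives you is the bound $e(C) \leq 2v(C)$ for every \emph{triangle-connected} subgraph $C$ of $G(n,p)$, and this does not on its face bound $m(C)$ since subgraphs of $C$ need not be triangle-connected.

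The missing step is a deterministic observation: if $C$ is triangle-connected and $e(C) \leq 2v(C)$, then in fact $m(C) \leq 2$. To see this, take a subgraph $J \subseteq C$ with at least one edge $e$; pick a triangle $\tau$ of $C$ containing $e$ (which exists since $C$ is triangle-connected) and build a triangle sequence $\vecH = (H_0, \ldots, H_\ell)$ for $C$ with $H_0 = \tau$, so that $s := |V(\tau) \cap V(J)| \geq 2$. Every edge of $J$ added at a step $i \geq 1$ is incident to $v_i \in V(J)$, so writing $T' = \{i \geq 1 : v_i \in V(J)\}$ we have $|T'| \leq v(J) - s$ and
\begin{equation*}
e(J) \ \leq \ \binom{s}{2} + \sum_{i \in T'} \deg(v_i, H_{i-1}) \ = \ \binom{s}{2} + 2|T'| + \sum_{i \in T'}\bigl(\deg(v_i, H_{i-1}) - 2\bigr) \ \leq \ \binom{s}{2} + 2(v(J) - s) + r(\vecH),
\end{equation*}
using that every summand $\deg(v_i, H_{i-1}) - 2$ is non-negative. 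Since $r(\vecH) = e(C) - 2v(C) + 3 \leq 3$ and $s \in \{2,3\}$, the right-hand side is at most $2v(J)$, giving $d(J) \leq 2$ and hence $m(C) \leq 2$. With this in hand, the colouring hypothesis applies to each triangle-connected component, and the rest of your assembly (disjoint colour palettes across components, fresh colours for edges in no triangle) goes through as you describe.
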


Observe by Claim~\ref{claim:subgraphappearance}, if there exists a balanced graph $G$ with $m(G)=m_2(H)$ such that $G \constrained (K_{1,k},H)$, then a coarse threshold at $n^{-1/m_2(H)}$ is the best we can hope for. This is 
exactly the case for the constrained Ramsey property for $(K_{1,3},K_3)$, as seen in Theorem~\ref{thm:constrained_K3}. 

Further note that analogous results to Theorem~\ref{thm:constrained0} hold for the Ramsey property, see, e.g.~\cite{ns}, and the asymmetric Ramsey property (when one looks for a different graph in each different colour), see~\cite{bhh,ksw}.
That is, roughly speaking, proving a $0$-statement at the natural candidate location can be reduced to solving an appropriate colouring question for not too dense graphs.

For graphs $H$ where the colouring statement within Theorem~\ref{thm:antiramsey0} does not hold, it is tempting to speculate that the threshold is at $n^{-1/m_{\aram}(H)}$, as is the case when $H$ is a copy of $K_3, K_4, C_4$ or any forest. 
An analogous result is true for the Ramsey property for graphs. Indeed, the only case for which the threshold is not $n^{-1/m_2(H)}$ is the case of stars $K_{1,k}$ which have a threshold given by the appearance of $K_{1,r(k-1)+1}$ (which whenever $r$-coloured gives a monochromatic copy of $K_{1,k}$).
However, in the case of the Ramsey property for hypergraphs, it was shown in~\cite{gnpsst} that for each $k \geq 4$, there exists a $k$-uniform hypergraph $H$ such that the threshold is not given by either the natural generalisation of $n^{-1/m_2(H)}$ nor the threshold for the appearance of a hypergraph which has the Ramsey property for $H$.
See~\cite{gnpsst} or e.g.~\cite{bhh} for further discussion on the thresholds of the Ramsey property for hypergraphs.

For a graph $H$, finding a particular graph $G$ with $m(G) \leq m_2(H)$ for which $G \antiramsey H$ immediately gives rise to a coarse $1$-statement for the anti-Ramsey property for $H$ at $n^{-1/m(G)}$.
However, this is not how the $1$-statement for $B_t \oplus F$ was proved in~\cite{anti-ram_non-balanced}. 
They instead used the appearance of lots of copies of $B_{3t-2}$ in $G(n,p)$ at the stated threshold and the property that $B_{3t-2} \antiramsey B_t$, to extend some rainbow copy of $B_t$ to a rainbow copy of $B_t \oplus F$. 

The colouring statement within Theorem~\ref{thm:antiramsey0} seems very hard to prove or disprove in general, even for specific $H$; 
indeed, known results about this statement and the analogous statement for other Ramsey properties make use of structural properties of $H$.  
It would be of great interest to determine the location of the threshold for the remaining open cases of the anti-Ramsey property, in particular by obtaining results on the colouring statement within Theorem~\ref{thm:antiramsey0}. 

{\bfseries Additional remark.} Since this paper was first submitted, Kuperwasser~\cite{kuperwasser} has made some progress on this exact question, proving that the answer to Question~\ref{ques:antiramsey} is yes whenever $H$ has $m_2(H) \geq 19$. 

\section*{Acknowledgements}
The authors would like to thank the two anonymous referees for their careful and helpful reviews, and
in particular, for a simplification of the proof of Lemma~\ref{lem:c4}, and for observations which led to a correction to Theorems~\ref{thm:constrained} and~\ref{thm:constrained_K3}.

\bibliography{Tash_version}
\bibliographystyle{amsplain}
\end{document}